\theoremstyle{plain}
\newtheorem{thm}{Theorem}[section] 
\newtheorem{prop}[thm]{Proposition} 
\newtheorem{lem}[thm]{Lemma} 
\newtheorem{cor}[thm]{Corollary}
\theoremstyle{remark} 
\newtheorem*{rem}{Remark} 
\theoremstyle{definition}
\numberwithin{equation}{section}
\renewcommand*{\div}{\operatorname{div}}
\newcommand*{\curl}{\operatorname{curl}}
\newcommand*{\supp}{\operatorname{supp}}
\newcommand*{\loc}{\mathrm{loc}}
\newcommand*{\bydef}{\overset{\rm def}{=}}
\newcommand*{\norm}[1]{\left\Vert #1\right\Vert}
\begin{document}

\title[Axisymmetric Incompressible Viscous Plasmas]{Axisymmetric Incompressible Viscous Plasmas: Global Well-Posedness and Asymptotics}

\author[1]{Diogo Ars\'enio}
\author[2]{Zineb Hassainia}
\author[2]{Haroune Houamed}
\address{New York University Abu Dhabi \\
Abu Dhabi \\
United Arab Emirates}
\email{diogo.arsenio@nyu.edu, zh14@nyu.edu, haroune.houamed@nyu.edu}

\keywords{Incompressible viscous three-dimensional fluids, Maxwell's system, plasmas, global well-posedness, axisymmetric structure.}
\date{\today}

\begin{abstract}
	This paper is devoted to the global analysis of the three-dimensional axisymmetric Navier--Stokes--Maxwell equations. More precisely, we are able to prove that, for large values of the speed of light $c\in (c_0, \infty)$,  for some threshold $c_0>0$ depending only on the initial data, the system in question admits a unique global solution. The ensuing bounds on the solutions are uniform with respect to the speed of light, which allows us to study the singular regime $c\rightarrow \infty$ and  rigorously derive the limiting   viscous magnetohydrodynamic (MHD) system in the axisymmetric setting.
	
	The strategy of our proofs draws insight from recent results on the two-dimensional incompressible Euler--Maxwell system to exploit the dissipative--dispersive structure of Maxwell's system in the axisymmetric setting. Furthermore, a detailed analysis of the asymptotic regime $c\to\infty$ allows us to derive a robust nonlinear energy estimate which holds uniformly in $c$. As a byproduct of such refined uniform estimates, we are able to describe the global strong convergence of solutions toward the MHD system.
	
	This collection of results seemingly establishes the first available global well-posedness of three-dimensional viscous plasmas, where the electric and magnetic fields are governed by the complete Maxwell equations, for large initial data as $c\to\infty$.
\end{abstract}

\maketitle

\setcounter{tocdepth}{1}
\tableofcontents


\section{Introduction and main results}
In this paper, we consider the       incompressible Navier--Stokes--Maxwell equations
\begin{equation}\label{Maxwell:system:*2}
	\begin{cases}
		\begin{aligned}
			\text{\tiny(Navier--Stokes's equation)}&&&\partial_t u +u \cdot\nabla u   = \nu \Delta u- \nabla p + j \times B, &\div u =0,&
			\\
			\text{\tiny(Amp\`ere's equation)}&&&\frac{1}{c} \partial_t E - \nabla \times B =- j , &\div E = 0,&
			\\
			\text{\tiny(Faraday's equation)}&&&\frac{1}{c} \partial_t B + \nabla \times E  = 0 , &\div B = 0,   &
			\\
			\text{\tiny(Ohm's law)}&&&j= \sigma \big( cE + P(u \times B)\big), &\div j = 0,&
		\end{aligned}
	\end{cases}
\end{equation}
 completed with a divergence-free  initial data $$ (u,E,B)|_{t=0} =  (u_0,E_0,B_0).$$  Here, $u$, $E$, $B$ and $j$ are defined on the whole space $ \mathbb{R}^+\times \mathbb{R}^d$,  with $d\in \{ 2,3\}$, and take their values in $\mathbb{R}^3$. The operator 
  $P \bydef \text{Id} - \nabla \Delta^{-1} \div $ denotes   Leray's projector on divergence-free vector fields.   Note that the divergence-free condition $\div B=0$ is not  a constraint  and \eqref{Maxwell:system:*2} is therefore not overdetermined.
  Instead,  it is a property   propagated by the flow provided that  it holds initially. 
  
The Navier--Stokes--Maxwell equations describe the evolution of a viscous plasma,  a charged gas or an electrically conducting fluid subject to the self-induced electromagnetic Lorentz force $j\times B$.

     As for the physical meaning of the quantities in \eqref{Maxwell:system:*2},  the electric and magnetic fields are denoted by    $ E$ and $ B$, respectively, while the electric current is denoted by     $j$. Additionally, the positive constants  $\nu$, $\sigma$  and $c$ stand for the fluid viscosity, the electrical conductivity and the speed of light, respectively. We refer the interested reader to \cite{bis-book, D-book} for further details on the underlying physical theories   about plasma modeling and to \cite{as} for a mathematical derivation  of \eqref{Maxwell:system:*2} through the analysis of the
viscous incompressible hydrodynamic regimes of Vlasov--Maxwell--Boltzmann systems.

The only known a priori  global information on solutions of \eqref{Maxwell:system:*2} is given by the \textit{energy inequality}
\begin{equation}\label{energy-inequa}
	\norm {\left( u , E ,B\right)(t) }_{L^2}^2+2\nu\int_0^t \norm {\nabla u(\tau)}_{L^2}^2 d\tau
	+\frac{2}{\sigma}\int_0^t \norm {j(\tau)}_{L^2}^2 d\tau \leq \mathcal{E}_0^2,
\end{equation}
for all $t\geq 0$, where we denote
\begin{equation*}
	\mathcal{E}_0 \bydef \norm {(u_0,E_0,B_0)}_{L^2}.
\end{equation*}
Note that \eqref{energy-inequa} is in fact an equality for smooth solutions.

Observe that, at least formally, taking the limit of $c$ to infinity in \eqref{Maxwell:system:*2} yields the well-known magneto-hydrodynamic (MHD) system
\begin{equation}\label{MHD*}
	\begin{cases}
		\begin{aligned}
		&	 \partial_t u +u \cdot\nabla u   = \nu \Delta u- \nabla   p   +B\cdot \nabla B  , 
			\\
			&   \partial_t B  + u\cdot \nabla B = \frac{1}{\sigma} \Delta B + B\cdot \nabla u .
		\end{aligned}
	\end{cases}\tag{MHD}
\end{equation}
Similarly to \eqref{Maxwell:system:*2}, solutions to the   system of equations \eqref{MHD*} enjoy the energy inequality 
\begin{equation}\label{energy-inequa:MHD}
	\norm {\left( u  ,B  \right)(t) }_{L^2}^2+2\nu\int_0^t \norm {\nabla u(\tau)}_{L^2}^2 d\tau
	+\frac{2}{\sigma}\int_0^t \norm {\nabla B(\tau)}_{L^2}^2 d\tau \leq\norm {\left( u_0 ,B_0\right)}_{L^2}^2,
\end{equation}
for any $t\geq 0$.

Although   Leray-type weak solutions for \eqref{MHD*} satisfying \eqref{energy-inequa:MHD}  are known to exist, this remains unknown for \eqref{Maxwell:system:*2}, even for large values of $c$.

  The issue comes from the fact that standard compactness methods, usually used to prove the existence of weak solutions, raise the following question: Given two sequences $(j_n)_{n\in \mathbb{N}}$, $ ( B_n)_{n\in \mathbb{N}}$, bounded    in  $ L^2_{t,x}$ and $ L^\infty_tL^2_x$ and converging weakly to $ j$ and $B$, respectively, does it hold that 
$ j_n\times B_n$ converges to $j\times B$, at least in the sense of distributions?
This remains unsettled as there is no known phenomenon which prevents the build-up of high frequencies in $ j_n\times B_n  $, as $n$ tends to infinity. 

One way to overcome this lack of compactness consists in propagating some  additional regularity on the magnetic field. Based on this idea, the first attempt to construct a (unique) global solution to the two dimensional Navier--Stokes--Maxwell system \eqref{Maxwell:system:*2} is due to Masmoudi \cite{MN} who proved that, given any $s>0$, if $u_0\in L^2(\mathbb{R}^2)$ and $(E_0,B_0)\in  H^s(\mathbb{R}^2)$, then there is a unique global weak solution to \eqref{Maxwell:system:*2}. Apart from   the energy estimate, the solution constructed therein does not satisfy any  uniform bound with respect to the speed of light $c$.

 Later on,  the first author, Ibrahim and Masmoudi \cite{aim15} established a  conditional convergence result  which entails   the convergence of weak solutions of \eqref{Maxwell:system:*2} to \eqref{MHD*}. 
 
  Crucial progress was then achieved in  \cite{ag20}    by the first author and Gallagher by showing  the persistence of the  $H^s$-regularity of the electromagnetic field, uniformly with respect to the speed of light $c$.

As for the inviscid version of \eqref{Maxwell:system:*2}, i.e., the incompressible Euler--Maxwell system, the first global results   on that model  were established recently by  the first and the third authors in  \cite{ah,ah2}. In that context, owing to   known results on Euler equations, see for instance \cite{DS:1963}, working at the level of the energy \eqref{energy-inequa}, with $\nu=0,$  is  insufficient to ensure  the global well-posedness of the system. One   is  therefore  compelled to study the existence and   uniqueness of weak solutions in a higher-regularity space. 

The main contribution from \cite{ah} is then the construction of a unique global solution of the Euler--Maxwell equations  \eqref{Maxwell:system:*2} (with $\nu=0$) in the spirit of  Yudovich's work \cite{Yudovich1}, where  the electromagnetic field has some Sobolev regularity $ H^s(\mathbb{R}^2), $ $s\in (\frac{7}{4}, 2)$.
 Moreover, it is shown therein that the solution is uniformly  bounded with respect to $c$ in   adequate spaces. The other work by the same authors \cite{ah2} further  establishes   the strong convergence of that solution, as $c$ goes to $\infty$.  We should point out that the results in \cite{ah,ah2}, for the   Euler--Maxwell system, only hold  under the assumption that $(u,E,B)$ has the \textit{two-dimensional normal structure}
 \begin{equation}\label{structure:2dim}
	u(t,x)=
	\begin{pmatrix}
		u_1(t,x)\\u_2(t,x)\\0
	\end{pmatrix},
	\qquad
	E(t,x)=
	\begin{pmatrix}
		E_1(t,x)\\E_2(t,x)\\0
	\end{pmatrix}
	\qquad\text{and}\qquad
	B(t,x)=
	\begin{pmatrix}
		0\\0\\b(t,x)
	\end{pmatrix}.
\end{equation}
 Extending the arguments from \cite{ah,ah2} to general structures is  a challenging open problem. As we shall see later on, a novelty of the present work is to permit the consideration of a new   three-dimensional structure in the context of plasmas, namely, the axisymmetric structure.
 
   From now on, we are going to  focus on the three dimensional case by first considering   the work by Ibrahim and Keraani \cite{IK2011} where it is proved that the system \eqref{Maxwell:system:*2} is globally well-posed provided that the initial data $(u_0,E_0,B_0)$ are small enough in 
 $$ \dot{B}_{2,1}^\frac{1}{2} \times \dot{H}^\frac{1}{2}\times \dot{H}^\frac{1}{2}  (\mathbb{R}^3).$$
 This smallness condition is slightly weakened in the work of Germain, Ibrahim and Masmoudi \cite{gim}, where the initial data lies in the space
 $$ \dot{H}^\frac{1}{2}\times \dot{H}^\frac{1}{2}\times \dot{H}^\frac{1}{2}  (\mathbb{R}^3).$$
 Note that the scaling of these spaces is at the critical level of the three-dimensional Navier--Stokes equations (i.e., in the case $E\equiv B\equiv 0$). Accordingly, we do not expect to maintain the uniqueness of the solution to \eqref{Maxwell:system:*2}  if the initial velocity  lies in spaces that scale below $\dot{H}^{\frac{1}{2}}$.
 
  Beside \cite{gim, IK2011},   the three-dimensional Navier--Stokes--Maxwell system \eqref{Maxwell:system:*2} was studied in several papers, for instance  \cite{ag20, YG20, YG22, YGZ16}. However, to the best of our knowledge, none of the existing results   provide enough information to study the regime $c\rightarrow \infty$ in the three-dimensional case.  
 
Thus, an important novelty of our paper is achieved by shedding   light on the  regime $c\rightarrow \infty$  and extending the techniques recently developed by the first and the third authors in \cite{ah,ah2} to the three-dimensional axisymmetric setting, which we introduce next.

 \subsection{The axisymmetric structure}\label{axi-subsection}
Throughout this paper, we assume that the velocity and the electromagnetic fields are axisymmetric, $u$ and $E$ are without swirl, whereas, $B$ has pure swirl. That is to say 
\begin{equation}\label{structure:axi}
u(x)=u_r(r,z) e_r + u_z(r,z) e_z, \quad  E(x)=E_r(r,z) e_r + E_z(r,z) e_z, \quad B(x)= B_\theta (r,z)e_\theta, 
\end{equation}
where $(r,\theta,z)$ denotes the usual cylindrical coordinates and $ (e_r,e_\theta,e_z)$ is   the corresponding orthonormal basis (see Section \ref{section:axi and para product} for the explicit  definition of these variables).
The careful reader may now notice that   this assumption shares some similarity with   the \textit{two-dimensional normal structure} \eqref{structure:2dim}. Indeed, in \eqref{structure:axi}, the magnetic field $B$  remains orthogonal to $u$ and $E$.  

This key observation will allow us to somewhat simplify the equations. However, this structure will also require some extra attention when performing estimates in Besov spaces, for the cylindrical basis $ (e_r,e_\theta,e_z)$ depends on the position variables.

  Let us now state some crucial properties provided by   \eqref{structure:axi}. First of all, note that the vorticity $$\omega\bydef \curl u$$ is acting only in the direction $e_\theta$, that is, $$\omega = \omega_\theta\, e_\theta \bydef \big(\partial_z u_r - \partial_r u_z \big) e_\theta.$$ On the other hand, straightforward computations show  that
\begin{equation*}
\nabla \times (j\times B) =  B \cdot \nabla j - j \cdot \nabla B = j_r\frac{B}{r}  -  j \cdot \nabla B.
\end{equation*}  
   Thus, taking the curl of the momentum equation in \eqref{Maxwell:system:*2} yields   
  \begin{equation}\label{omega-equa}
  \partial_t \omega_\theta + u\cdot\nabla \omega_{\theta} - \nu\big(\Delta - \frac{1}{r^2} \big)\omega_\theta= \frac{u_r}{r} \omega_\theta +  j_r\frac{B_\theta}{r}  -   j \cdot \nabla B _\theta .
  \end{equation}
  Then, by introducing $$\Omega\bydef \frac{\omega_{\theta}}{r}, \qquad \Gamma\bydef \frac{B_\theta}{r},$$ we further obtain the   equation  
    \begin{equation}\label{Omega-equa}
  \partial_t \Omega  + u\cdot\nabla \Omega  - \nu \big(\Delta+ \frac{\partial_r}{r} \big)\Omega =      j \cdot \nabla  \Gamma  .
  \end{equation}
  Note that the last equation is very   similar to the vorticity equation of the  two-dimensional Euler--Maxwell system treated in \cite{ah}.

  An equivalent formulation of \eqref{omega-equa} and \eqref{Omega-equa} is   obtained  by employing     Amp\`ere's equation   to eliminate  the current density $j$ and   deduce that
  \begin{equation}\label{omega-equa-form2}
  \partial_t \omega_\theta + u\cdot\nabla \omega_{\theta} - \nu\big(\Delta - \frac{1}{r^2} \big)\omega_\theta= \frac{u_r}{r} \omega_\theta -\partial_z \big( \Gamma B_\theta\big)-  \frac{1}{c}\partial_t E _r\frac{B_\theta}{r}  + \frac{1}{c}\partial_t E   \cdot \nabla B _\theta ,
  \end{equation} 
  and
    \begin{equation}\label{Omega-equa-form2}
  \partial_t \Omega  + u\cdot\nabla \Omega - \nu \big(\Delta+ \frac{\partial_r}{r} \big)\Omega =  -\partial_z \big( \Gamma^2\big)  -  \frac{1}{c}\partial_t E \cdot \nabla \Gamma   .
  \end{equation}
 Furthermore, by virtue of  \eqref{structure:axi}, combining Amp\`ere and Faraday's equations yields   
  \begin{equation*}
  \frac{1}{c^2} \partial_{tt} B + \partial_t B + u\cdot\nabla B -\frac{1}{\sigma} \Delta B= \frac{u_r}{r} B,
  \end{equation*}
  and therefore
   \begin{equation*}
  \frac{1}{c^2} \partial_{tt} \Gamma + \partial_t \Gamma + u\cdot\nabla \Gamma -\frac{1}{\sigma}\big(\Delta+ \frac{\partial_r}{r} \big) \Gamma=0.
  \end{equation*}
  
   The key observation behind the global estimates in the axisymmetric setting when $E\equiv B\equiv 0$ is the fact that the stretching term $\omega\cdot \nabla u$ is reduced to $\frac{u_r}{r}\omega$.  As shown in  \cite{SY94}, this term can be controlled by the estimate\footnote{$L^{p,q}$ stands for the usual  Lorentz spaces.}  
   \begin{equation}\label{ur/r-interpolation}
\norm {\frac{u_r}{r}}_{L^\infty}\lesssim \norm {\Omega}_{L^{3,1}}\lesssim \norm {\Omega}_{L^2}^\frac{1}{2}  \norm {\nabla \Omega}_{L^2}^\frac{1}{2},
\end{equation}  
    which then yields a   global bound on the velocity in sub-critical spaces. Indeed, this is due to  the fact that $\Omega$ obeys a transport--diffusion equation (resp.\ transport equation) in the case of the Navier--Stokes equations (resp.\ Euler equations), which can be used to deduce      a global bound for $\Omega$ in $L^\infty_tL^2\cap L^2_t\dot{H}^1$   (resp.\ in $L^\infty_tL^{3,1}  $),   by means of standard energy estimates.  Similar results can be extended to the  MHD equations (see Theorem \ref{Thm:MHD}, below).

We conclude by pointing out that the axisymmetric structure has  been extensively studied in a variety of fluid models, such as the Navier--Stokes \cite{HFDZ17,HFDZ19,YL22,LY23A,LX23}, Euler \cite{D07}, Boussinesq \cite{HHK11,DH21,DWZ18,HHZ20,HHZ23,HR10,HZ20} and MHD \cite{YZ18,QYZ17,LZ15,QY22,LY18,WZ22} systems with partial or full dissipation.

   \subsection{Aims and   main results}\label{Section-aims}
   We intend to show that the global well-posedness of the three dimensional  system  \eqref{Maxwell:system:*2}  holds  whenever $c $ is large enough. We shall prove our results uniformly with respect to $c$ and then   derive the  MHD equations by studying  the singular limit $c\rightarrow \infty$.    We refer to   Section \ref{section:axi and para product} for the definition of all  functional spaces.
 
\begin{thm}[Global well-posedness]\label{Thm:1}
Let   $(u_0^c,E_0^c,B_0^c)_{c>0}$ be a family of divergence-free axisymmetric  initial data   such that  $ u_0^c$ and $E_0^c$ are without swirl, whereas $B_0^c$ has pure swirl. Assume further that 
$$ (u_0^c,E_0^c,B_0^c) \in \left( H^1\times   H^{\frac{3}{2}}\times   H^{\frac{3}{2}} \right)(\mathbb{R}^3), \qquad c^{-1} ( E_0^c,B_0^c) \in \dot{B}^{\frac{5}{2}}_{2,1}  (\mathbb{R}^3),$$ 
$$  \Omega_0^c \bydef \frac{\omega_0^c}{r}  \in L^2(\mathbb{R}^3) , \qquad  j_0^c \bydef \sigma( cE_0^c + P(u_0^c\times B_0^c)  ) \in  H^\frac{1}{2}(\mathbb{R}^3), $$ 
uniformly in $c>0$. Then,  there is a constant $c_0>0$,   depending only on the size of the initial data,   such that,  for any $c\in (c_0,\infty)$, there is a unique     global axisymmetric solution $(u^c,E^c,B^c)$ of the three dimensional Navier--Stokes--Maxwell equations \eqref{Maxwell:system:*2}, with $\nu>0$,     such that $ u^c$ and $E^c$ are without swirl and $B^c$ has pure swirl.  This solution  enjoys the bounds 
 \begin{equation*}
 \begin{gathered}
  u^c \in L^\infty(\mathbb{R}^+; H^1), \qquad \nabla u^c\in  L^2(\mathbb{R}^+; H^1), \qquad \Omega^c\bydef \frac{\omega _{\theta}^c}{r} \in L^\infty(\mathbb{R}^+; L^2) \cap L^2(\mathbb{R}^+; \dot{H} ^1),
  \\  
  (E^c,B^c)\in L^\infty(\mathbb{R}^+; H^\frac{3}{2}), \qquad  c^{-1}(E^c,B^c)\in L^\infty(\mathbb{R}^+; \dot{B}^\frac{5}{2}_{2,1}),
  \\
  (E^c,B^c) \in L^2(\mathbb{R}^+; \dot{B} ^\frac{5}{2}_{2,1}),
  \quad cE^c\in   L^2(\mathbb{R}^+; H^\frac{3}{2}),
  \quad B^c\in   L^2(\mathbb{R}^+; \dot H^1),
  \quad j^c\in \bigcap_{p=2}^\infty L^p(\mathbb{R}^+; H^\frac{1}{2}),
   \end{gathered}
 \end{equation*}
 uniformly in $c\in (c_0,\infty)$.
If, moreover, 
$$( E_0^c,B_0^c) \in \dot{B}^{\frac{5}{2}}_{2,1}  (\mathbb{R}^3),$$
uniformly in $c>0$, then     the bound 
$$(E^c,B^c)\in L^\infty(\mathbb{R}^+; \dot{B}^\frac{5}{2}_{2,1})$$
holds uniformly in $c \in (c_0,\infty).$
\end{thm}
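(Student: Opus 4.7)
The plan is to (i) construct a local-in-time axisymmetric solution in the class indicated by the statement, (ii) derive a chain of a priori estimates that are uniform in $c$ and close under a smallness condition forced by taking $c$ sufficiently large, and (iii) extend the local solution globally via a continuation criterion. Local existence in the announced regularity follows from a standard Fujita--Kato-type fixed-point argument, relying on the fact that the structure ``$u,E$ without swirl, $B$ of pure swirl'' is preserved by each term in \eqref{Maxwell:system:*2} (the Leray projector, curl, and cross products all map this class into itself).

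The core of the argument is the chain of a priori bounds. The energy identity \eqref{energy-inequa} already yields control of $(u,E,B)$ in $L^\infty_t L^2$, of $\nabla u$ in $L^2_{t,x}$, and of $j$ in $L^2_{t,x}$, but in three dimensions this is insufficient. I would therefore focus on the scalar $\Omega = \omega_\theta/r$, which solves \eqref{Omega-equa}. Testing against $\Omega$ and using the positivity of the axisymmetric diffusion $-(\Delta + \partial_r/r)$ in $L^2$ gives
\begin{equation*}
  \tfrac{1}{2}\tfrac{d}{dt}\|\Omega\|_{L^2}^2 + \nu \|\nabla \Omega\|_{L^2}^2 = \int (j\cdot\nabla \Gamma)\,\Omega\, dx,
\end{equation*}
whose right-hand side is scaling-critical. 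I would control it by Lorentz/Besov paraproducts combined with the axisymmetric interpolation \eqref{ur/r-interpolation}, absorbing the top-order derivative into the viscosity term, in the spirit of the two-dimensional Euler--Maxwell analysis of \cite{ah}.

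For the electromagnetic field, combining Ampère, Faraday, and Ohm's law as in Section~\ref{axi-subsection} yields a damped wave equation
\begin{equation*}
  \tfrac{1}{c^2}\partial_{tt} B + \partial_t B - \sigma^{-1}(\Delta + \partial_r/r)B = (u_r/r)\, B,
\end{equation*}
whose propagator behaves parabolically at low and moderate frequencies and reduces to the heat flow as $c\to\infty$. A frequency-by-frequency analysis of this dispersive--dissipative operator, in the spirit of \cite{ag20, ah2}, produces uniform-in-$c$ bounds of $(E,B)$ in $H^{3/2}$ and of $c^{-1}(E,B)$ in $\dot{B}^{5/2}_{2,1}$, provided the source terms are controlled via the previous step; the electric field itself acquires an extra $1/c$ factor inherited from Ohm's law through $E = (j/\sigma - P(u\times B))/c$. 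The $H^{1/2}$-bound on $j$ then follows from Ohm's law together with product estimates in Besov spaces. Collecting all of these inequalities produces a nonlinear Gronwall-type argument whose residual constants carry factors of $1/c$; choosing $c$ above a threshold $c_0$ that depends only on the initial data permits a bootstrap closure, and iteration gives global existence together with uniqueness in the stated class.

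\textbf{Main obstacle.} The principal difficulty is closing the $\Omega$-estimate and the $\dot{B}^{5/2}_{2,1}$-estimate on $c^{-1}(E,B)$ simultaneously and uniformly in $c$. The coupling $j\cdot \nabla \Gamma$ in \eqref{Omega-equa} is critical in scaling and demands the sharp interpolation \eqref{ur/r-interpolation} together with the parabolic gain provided by the Maxwell semigroup; moreover, Besov estimates on the axisymmetric quantity $B_\theta/r$ do not reduce to scalar ones, because the cylindrical basis $(e_r, e_\theta, e_z)$ depends on position, so commutator estimates between Littlewood--Paley localizations and the projection onto this basis must be handled carefully, as already signaled in Section~\ref{axi-subsection}. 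Finally, the last assertion of the theorem, promoting the $\dot{B}^{5/2}_{2,1}$-control from $c^{-1}(E,B)$ to $(E,B)$ itself under the stronger initial assumption, should follow from the same dispersive--dissipative analysis without the $1/c$ rescaling.
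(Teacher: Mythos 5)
Your overall architecture (uniform-in-$c$ a priori estimates, a nonlinear bootstrap closed by taking $c$ large, continuation) matches the paper's, but the central step of your vorticity estimate follows exactly the route that the paper argues cannot be closed in three dimensions. You propose to test \eqref{Omega-equa} against $\Omega$ and to control the critical coupling $\int (j\cdot\nabla\Gamma)\,\Omega\,dx$ using the only global information on $j$, namely $j\in L^2_{t,x}$ from the energy inequality. This produces a bound on $(\omega,\Omega)$ in $L^\infty_tL^2\cap L^2_t\dot H^1$ that is \emph{linear} in $\|B\|_{L^2_t\dot B^{5/2}_{2,1}}$, and the interpolation trick that closes the analogous loop in two dimensions fails here: one would need $j\in L^2_tL^3$, which is not available. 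The paper's way out is to eliminate $j$ via Amp\`ere's law, $j=\nabla\times B-\tfrac1c\partial_tE$, so that the source in the $\Omega$-equation becomes $-\partial_z(\Gamma^2)-\tfrac1c\partial_tE\cdot\nabla\Gamma$ as in \eqref{Omega-equa-form2}. The quadratic term is then handled through almost-parabolic bounds on $\Gamma=B_\theta/r$ in $L^\infty_tL^3\cap L^2_t\dot H^1$ (Proposition \ref{prop:almost-parabolic-ES}, which uses the damped wave equation for $\Gamma$ that you wrote down, together with Hardy-type inequalities for axisymmetric fields), while the remaining term requires the genuinely new asymptotic ingredient: a quantitative decay $\|\tfrac1c\partial_tE\|_{L^2_t\dot H^{1/2}}=O(c^{-1})$ (Proposition \ref{decay-E}), obtained from a damped wave equation satisfied by $E$. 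Without this substitution and decay estimate, the velocity bound does not close, and the rest of your chain collapses.

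A second missing ingredient is the product estimate needed to propagate the $\dot B^{5/2}_{2,1}$ regularity of $(E,B)$. The source $P(u\times B)$ must be estimated at regularity up to $5/2$ in the second factor with $u$ only in $L^\infty_t\dot H^1\cap L^2_t\dot H^2$; classical paraproduct laws stop at $3/2$. You correctly flag that the position dependence of $(e_r,e_\theta,e_z)$ is an obstruction, but the resolution is not a commutator estimate between Littlewood--Paley blocks and the cylindrical projection: it is the observation that $\nabla\times F$ and $G$ remain colinear after convolution with radial kernels for the no-swirl/pure-swirl pairing, which removes the problematic paraproduct $T_GF$ (Lemma \ref{paradifferential:1}). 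Finally, the last assertion of the theorem is not merely ``the same analysis without the $1/c$ rescaling'': a direct energy estimate does not reach $\dot B^{5/2}_{2,1}$, and the paper runs a separate persistence-of-regularity bootstrap through the high- and low-frequency Maxwell estimates once the uniform bounds are in hand.
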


The complete proof of Theorem \ref{Thm:1} is given in Section \ref{section:closing:ES}.

\begin{rem}
	By setting $E\equiv B \equiv 0$ in the above theorem, observe that we recover a classical global well-posedness result for the axisymmetric Navier--Stokes equations (see, for instance, Theorem 10.4 in \cite{lemarie:2016}).
\end{rem}

We also wish to understand the singular regime $c\rightarrow \infty$ in \eqref{Maxwell:system:*2}.    The next corollary establishes a well-posedness result for the limiting system \eqref{MHD*} by considering the weak limit of solutions constructed in  Theorem \ref{Thm:1}.

\begin{cor}[Weak singular limit]\label{Thm:MHD}
Let $(u_0,B_0)$ be an axisymmetric divergence-free vector field,  where $u_0$ is without swirl and $B_0$ has pure swirl. Further assume  that 
\begin{equation*}
\begin{aligned}
(u_0, B_0) \in \left( H^1 \times  H^\frac{3}{2}\right)(\mathbb{R}^3), \quad \frac{\omega_0}{r} \in L^2(\mathbb{R}^3).
\end{aligned}
\end{equation*}
Then, there is a unique axisymmetric solution to \eqref{MHD*}, with $\nu>0$ and initial data $(u_0,B_0)$, enjoying the bounds 
\begin{equation*}
\begin{aligned}
(u,B) \in  L^\infty \left( \mathbb{R}^+;  H^1(\mathbb{R}^3) \times H^\frac{3}{2}(\mathbb{R}^3)\right)  , \quad (\nabla u,  \nabla  B) \in   L^2 \left( \mathbb{R}^+;  H^1(\mathbb{R}^3) \times B_{2,1}^\frac{3}{2}(\mathbb{R}^3)\right)   ,
\end{aligned}
\end{equation*} 
and
\begin{equation*}
\begin{aligned}
\frac{\omega_\theta}{r}   \in L^\infty ( \mathbb{R}^+;L^2(\mathbb{R}^3))\cap L^2 ( \mathbb{R}^+; \dot{H}^1(\mathbb{R}^3)).
\end{aligned}
\end{equation*}
\end{cor}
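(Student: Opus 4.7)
The plan is to realize the desired solution as the singular limit $c\to\infty$ of the global axisymmetric solutions to \eqref{Maxwell:system:*2} provided by Theorem \ref{Thm:1}. First I would construct a family of admissible initial data by setting $u_0^c = u_0$, $E_0^c = 0$, and $B_0^c = S_{N(c)} B_0$, where $S_{N(c)}$ is the Littlewood--Paley low-frequency cut-off with $N(c) \to \infty$ calibrated slowly enough --- e.g.\ $N(c) = \lfloor \tfrac{1}{2}\log_2 c\rfloor$ --- so that $c^{-1}\|B_0^c\|_{\dot{B}^{5/2}_{2,1}} \lesssim c^{-1} 2^{N(c)}\|B_0\|_{H^{3/2}} \to 0$ while $B_0^c \to B_0$ in $H^{3/2}$. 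Radial Fourier multipliers preserve the axisymmetric no-swirl/pure-swirl decomposition, so the structural hypotheses are intact, and $j_0^c = \sigma P(u_0 \times B_0^c)$ is uniformly bounded in $H^{1/2}$ by standard product laws. Theorem \ref{Thm:1} then yields, for $c > c_0$, global solutions $(u^c, E^c, B^c)$ enjoying bounds uniform in $c$.

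The second step is to extract compactness. Weak-$*$ compactness in the function spaces listed in Theorem \ref{Thm:1} produces a candidate $(u, B)$ with the stated regularity by lower semicontinuity. The uniform bound $cE^c \in L^2 H^{3/2}$ forces $E^c \to 0$ strongly in $L^2 H^{3/2}$, while Faraday's equation $\partial_t B^c = -c\,\curl E^c$ gives $\partial_t B^c$ bounded in $L^2 H^{1/2}$; combined with $B^c$ bounded in $L^\infty H^{3/2} \cap L^2 \dot{B}^{5/2}_{2,1}$, the Aubin--Lions lemma delivers strong convergence of $B^c$ in $L^2_{\loc}(\mathbb{R}^+; H^s_{\loc})$ for any $s < 5/2$, hence in $L^2_{\loc}(\mathbb{R}^+; L^\infty_{\loc})$. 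A parallel argument gives strong compactness of $u^c$ in $L^2_{\loc}(\mathbb{R}^+; H^s_{\loc})$ for any $s<2$.

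Passing to the limit in the quadratic terms is then routine. Ampère's equation rewrites as $j^c = \curl B^c - c^{-1}\partial_t E^c$, and the uniform control of $c^{-1}E^c$ in $L^\infty \dot{B}^{5/2}_{2,1}$ implies that $c^{-1}\partial_t E^c \to 0$ in the distributional sense (by integration by parts in time against test functions). Thus $j^c \rightharpoonup \curl B$ weakly in $L^2_{\loc}$, and combined with the strong convergence of $B^c$,
\begin{equation*}
j^c \times B^c \longrightarrow \curl B \times B = B\cdot\nabla B - \nabla\tfrac{|B|^2}{2}
\end{equation*}
in distributions, delivering the Navier--Stokes half of \eqref{MHD*}. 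Taking the curl of Ohm's law and substituting via Ampère and Faraday recasts Faraday as $\partial_t B^c + u^c\cdot\nabla B^c - \sigma^{-1}\Delta B^c - B^c\cdot\nabla u^c = c^{-1}\sigma^{-1}\partial_t\curl E^c$; the right-hand side vanishes in the distributional limit by the same argument, and the convective and stretching terms converge by the strong/weak pairing established above.

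Uniqueness then follows from a standard $L^2$-energy estimate on the difference of two putative solutions $(u_1, B_1)$ and $(u_2, B_2)$, absorbing the transport and stretching contributions via Gagliardo--Nirenberg interpolation into the parabolic dissipation and closing by Grönwall --- the bounds $(u_2, B_2) \in L^\infty H^1$ with $(\nabla u_2, \nabla B_2) \in L^2 H^1\times L^2 B^{3/2}_{2,1}$ leave ample room, and uniqueness then promotes the subsequential limit to convergence of the full family. I expect the main technical obstacle to be the first step: calibrating $N(c)$ so that every uniform hypothesis of Theorem \ref{Thm:1} holds simultaneously while the approximate data converge to $(u_0, B_0)$ in the correct topology requires reconciling the gap between the regularity imposed on the electromagnetic field (at the $\dot{B}^{5/2}_{2,1}$ level, rescaled by $c^{-1}$) and the lower regularity ($H^{3/2}$) available on the target datum $B_0$.
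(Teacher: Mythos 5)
Your proposal is correct and follows essentially the same route as the paper's proof: produce a family of data satisfying the hypotheses of Theorem \ref{Thm:1}, use the resulting uniform bounds together with an Aubin--Lions compactness argument to pass to the limit in the nonlinear terms, and conclude uniqueness by an $L^2$ energy (weak--strong) estimate. The only difference is cosmetic: the paper takes an arbitrary admissible family converging to $(u_0,0,B_0)$, whereas you exhibit one explicitly via the calibrated truncation $S_{N(c)}B_0$, which resolves the regularity-gap concern you raise at the end.
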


\begin{proof}
We build   global solutions by making use of  the bounds from the  preceding theorem and employing standard compactness methods.  To that end, let $(u_0^{c_n},E_0^{c_n},B_0^{c_n})_{n\in \mathbb{N}}$ be any family of divergence-free vector fields satisfying the assumptions of Theorem \ref{Thm:1} and converging  to $(u_0,0,B_0)$, at least in the sense of distributions. In particular, by Theorem \ref{Thm:1}, there exists a unique family $(u^{c_n},E^{c_n},B^{c_n})_{n\in \mathbb{N}}$ of solutions to \eqref{Maxwell:system:*2}, which obeys the bounds 
\begin{equation*}
\begin{aligned}
(u^{c_n},B^{c_n}) \in  L^\infty \left( \mathbb{R}^+;  H^1(\mathbb{R}^3) \times H^\frac{3}{2}(\mathbb{R}^3)\right)  , \quad (\nabla u^{c_n},  \nabla  B^{c_n}) \in   L^2 \left( \mathbb{R}^+;  H^1(\mathbb{R}^3) \times B_{2,1}^\frac{3}{2}(\mathbb{R}^3)\right)   ,
\end{aligned}
\end{equation*} 
and
\begin{equation*}
\begin{aligned}
\frac{\omega_\theta^{c_n}}{r}   \in L^\infty ( \mathbb{R}^+;L^2(\mathbb{R}^3))\cap L^2 ( \mathbb{R}^+; \dot{H}^1(\mathbb{R}^3)), \qquad c_nE^{c_n} \in L^2(\mathbb{R}^+; H^\frac{3}{2}),
\end{aligned}
\end{equation*}
uniformly in $n \in \mathbb{N}$.

 Thus, by the Banach--Alaoglu theorem, up to   extraction of a subsequence (which is not distinguished, for simplicity), it holds that 
$$(u^{c_n},E^{c_n},B^{c_n}) \overset{n\rightarrow \infty}{\longrightarrow} (u ,0 ,B ) , \quad \text { in } \, \mathcal{D}'(\mathbb{R}^+\times \mathbb{R}^3),$$
where $(u,B)$ is in the same functional spaces as $(u^{c_n},B^{c_n})$.  
 
 In fact, one can also show that $ (\partial_t u^{c_n} , \partial_t B^{c_n})_{n\in \mathbb{N}}$ 
 is uniformly  bounded in $L^2_{t,x,\loc}$, which implies,  by a classical compactness result by Aubin and Lions (see \cite{s87}  for a thorough discussion of such compactness results and, in particular, Section 9 therein, for convenient results which are easily applicable to our setting), that $(u^{c_n},B^{c_n})_{n\in \mathbb{N}}$ is relatively compact in $L^2_{t,x,\loc}$.
 
  Therefore, by taking the limit in \eqref{Maxwell:system:*2} in the sense of distributions, and exploiting the strong compactness of $(u^{c_n},B^{c_n})_{n\in \mathbb{N}}$ to show the weak stability of nonlinear terms, it is readily seen that $(u,B) $ is a weak solution of \eqref{MHD*}, thereby completing the existence proof.
  
   As for the  uniqueness of solutions, we   only need to note that  it follows directly from weak--strong uniqueness principles for fluid dynamical models (for instance, see \cite[Section 3.2.3]{as}  and \cite{GP02}).
\end{proof}  

\begin{rem}
The solution constructed in the previous corollary enjoys the  refined bound
\begin{equation}\label{RMK:HF}
	B\in \widetilde{L}^\infty\big( \mathbb{R}^+; \dot{B}^\frac{3}{2}_{2,2}(\mathbb{R}^3)\big).
\end{equation}
Indeed, by virtue of the bounds from Corollary \ref{Thm:MHD}, one can show that 
$ u\cdot \nabla B$  and $B\cdot \nabla u$ belong to  $ L^2( \mathbb{R}^+; \dot{H}^\frac{1}{2} (\mathbb{R}^3)) $. Therefore,  standard parabolic regularity estimates applied directly to the heat equation satisfied by $B$ in \eqref{MHD*}   give  the  bound above. 
 In particular, this bound guarantees that
\begin{equation}\label{0-limit:HF}
 \lim_{n \rightarrow \infty}  \sup_{t\in [0,\infty)}\| \mathds{1}_{\{|D|\geq  \Theta_n \}}  B  (t)\|_{\dot{H}^\frac{3}{2} } =0 ,   
\end{equation}  
for  any sequence $ (\Theta_n)_{n\geq 1}$ satisfying $$ \lim_{n\rightarrow \infty} \Theta_n = \infty.$$ 
Further note that  the weaker bound 
$$ B\in L^\infty \big( \mathbb{R}^+;   \dot{H} ^\frac{3}{2}(\mathbb{R}^3)\big)$$
would not be    enough  to establish \eqref{0-limit:HF}. This    will be important, later on, in the proofs of our main results.
\end{rem} 

We state now our second main theorem. It concerns the strong convergence, as $c\rightarrow \infty$, of the solution  given in Theorem \ref{Thm:1} toward the solution constructed in Corollary \ref{Thm:MHD}.

\begin{thm}[Strong singular limit]\label{Thm:CV}
Let   $(u_0^c,E_0^c,B_0^c)_{c>0}$ be a family of   initial data satisfying the assumptions in Theorem \ref{Thm:1} and denote by $(u^c,E^c,B^c)_{c>c_0}$ the corresponding unique global solution to \eqref{Maxwell:system:*2} given by the same theorem. Further consider   divergence-free vector fields  $(u_0,B_0)$   such that  
$$(u_0,B_0) \in  \left(  H^1\times H^\frac{3}{2}\right) (\mathbb{R}^3), \quad \frac{\omega_0}{r} \in L^2(\mathbb{R}^3) ,$$  
and  assume that
$$\lim_{c \rightarrow \infty} \norm {(u_0^c,B_0^c) - (u_0,B_0) }_{H^1\times L^2}=0. $$
 Finally, let   $(u,B)$ be  the  unique global solution of \eqref{MHD*}, given in Corollary \ref{Thm:MHD}, associated to the data $ (u_0,B_0)$. 
 
 Then $ (u^c,B^c)$ converges strongly to $(u,B)$,  as $c\rightarrow \infty$. More specifically, it holds that 
\begin{equation} \label{CV-u:statement}
\begin{aligned}
\lim_{c \rightarrow \infty}\left( \sup_{t\in [0,\infty)}\| (u^c-u) (t)\|_{\dot{H}^1 }   +  \int_0^ \infty \| (u^c-u) (t) \|_{ \dot{H}^{2} }^2 dt \right)  =0 ,
\end{aligned}
\end{equation}
and, for all $s\in [0,\frac{3}{2})$, 
\begin{equation}\label{CV-B:statement}
\begin{aligned}
\lim_{c \rightarrow \infty}\left( \sup_{t\in [0,\infty)}\| (B^c-B) (t)\|_{\dot{H}^s  }  +  \int_0^ \infty \| (B^c-B) (t) \|_{ \dot{H}^{s+ 1} }^2 dt \right)  =0 .
\end{aligned}
\end{equation}
If, moreover, we assume  that
\begin{equation*}
\lim_{c \rightarrow \infty} \norm {(E_0^c,B_0^c)-(0,B_0)}_{\dot{H}^\frac{3}{2}}=0
\end{equation*}  
and   
\begin{equation}\label{assumption:high:frequencies}
 \lim _{c\rightarrow \infty} \left( c^{-1} \| (E^c_0,B^c_0) \|_{  \dot{B}^\frac{5}{2}_{2,1}} \right) =0,
\end{equation} 
then \eqref{CV-B:statement} holds for $s=\frac{3}{2}$, as well.
\end{thm}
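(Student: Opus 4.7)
The plan is to derive equations for the differences $\delta u \bydef u^c - u$ and $\delta B \bydef B^c - B$, perform energy estimates at the required regularity levels, and treat the $c$-dependent corrections as forcings which must be shown to vanish as $c\to\infty$. Using Amp\`ere's equation to substitute $j^c = \curl B^c - c^{-1}\partial_t E^c$ together with the identity $(\curl B^c)\times B^c = B^c\cdot\nabla B^c - \tfrac12\nabla|B^c|^2$, the difference equations take the form
\begin{equation*}
\partial_t \delta u + u^c\cdot\nabla \delta u + \delta u\cdot\nabla u - \nu \Delta \delta u + \nabla \delta p = B^c\cdot\nabla \delta B + \delta B\cdot\nabla B - c^{-1}(\partial_t E^c)\times B^c,
\end{equation*}
\begin{equation*}
\partial_t \delta B + u^c\cdot\nabla \delta B + \delta u\cdot\nabla B - \sigma^{-1} \Delta \delta B - B^c\cdot\nabla \delta u - \delta B\cdot\nabla u = -c^{-2}\partial_{tt} B^c,
\end{equation*}
the last term on each right-hand side being the genuine Maxwell remainder.

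First I would close a combined $\dot{H}^1 \times L^2$ estimate on $(\delta u, \delta B)$: testing against $-\Delta \delta u$ and $\delta B$ respectively, the quadratic coupling terms are absorbed using product inequalities together with the uniform bounds from Theorem~\ref{Thm:1} (chiefly $u^c \in L^\infty(\mathbb{R}^+; H^1)$, $\nabla u^c \in L^2(\mathbb{R}^+; H^1)$ and $B^c \in L^\infty(\mathbb{R}^+; H^{3/2})\cap L^2(\mathbb{R}^+; \dot{H}^1)$) and the analogous MHD bounds from Corollary~\ref{Thm:MHD}. The Maxwell remainders are handled by integrating by parts in time, which transfers the time derivatives onto slowly varying test quantities and produces boundary and interior contributions governed by $c^{-1}E^c$ and $c^{-1}\partial_t B^c = -\curl E^c$. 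Both are quantitatively small: the basic energy inequality \eqref{energy-inequa} gives $\norm{c^{-1}E^c}_{L^\infty(\mathbb{R}^+; L^2)} \leq \mathcal{E}_0/c$, while Theorem~\ref{Thm:1} furnishes $\norm{E^c}_{L^2(\mathbb{R}^+; H^{3/2})} \leq C/c$ via the uniform control of $cE^c$. A Gronwall argument, seeded by the hypothesis $\norm{(u_0^c,B_0^c)-(u_0,B_0)}_{H^1\times L^2}\to 0$, then yields \eqref{CV-u:statement} and \eqref{CV-B:statement} at $s=0$. Interpolating with the uniform bounds on $B^c$ and $B$ in $L^\infty(\mathbb{R}^+; \dot{H}^{3/2})$ covers the range $s\in(0,3/2)$.

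The hard part will be the critical endpoint $s=3/2$, which is scaling-critical in three dimensions and triggers the additional hypotheses on the initial data. Here I would decompose $\delta B = \mathds{1}_{\{|D|\leq \Theta\}}\delta B + \mathds{1}_{\{|D|> \Theta\}}\delta B$ at a frequency threshold $\Theta=\Theta(c)\to\infty$. The low-frequency block satisfies a refined $\dot{H}^{3/2}$ energy estimate, closeable via Gronwall provided the borderline commutator and product terms are tamed using the Besov regularity $B^c, B\in L^2(\mathbb{R}^+; \dot{B}^{5/2}_{2,1})$. The high-frequency contribution coming from $B$ is made arbitrarily small, uniformly in time, thanks to the refined bound $B\in \widetilde{L}^\infty(\mathbb{R}^+; \dot{B}^{3/2}_{2,2})$ pointed out in the remark following Corollary~\ref{Thm:MHD}, which gives precisely \eqref{0-limit:HF}. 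The high-frequency contribution from $B^c$ is controlled by interpolating between the uniform bound $c^{-1}(E^c,B^c)\in L^\infty(\mathbb{R}^+; \dot{B}^{5/2}_{2,1})$ and the smallness assumption \eqref{assumption:high:frequencies}, which is precisely the role played by that extra hypothesis. Choosing $\Theta(c)\to\infty$ slowly enough that the Maxwell forcing, the MHD high-frequency tail, and the Navier--Stokes--Maxwell high-frequency tail all vanish simultaneously, combined with the additional assumption $\norm{(E_0^c,B_0^c)-(0,B_0)}_{\dot{H}^{3/2}}\to 0$ on the initial data, closes the endpoint estimate. Calibrating the divergence rate of $\Theta(c)$ against the various decay rates is the main technical subtlety.
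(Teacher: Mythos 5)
Your overall architecture (difference equations, energy estimates, interpolation, frequency splitting at a $c$-dependent threshold for the endpoint $\dot H^{3/2}$ using \eqref{0-limit:HF} and \eqref{assumption:high:frequencies}) matches the paper's, but your first step contains a genuine gap. You propose to close a \emph{combined} $\dot H^1\times L^2$ estimate by testing the $\delta u$-equation against $-\Delta\delta u$ and the $\delta B$-equation against $\delta B$. With mismatched test functions the antisymmetric cancellation $\int (B^c\cdot\nabla\delta B)\cdot\delta u\,dx+\int (B^c\cdot\nabla\delta u)\cdot\delta B\,dx=0$ is destroyed, and you are left with the coupling term $\int (B^c\cdot\nabla\delta B)\cdot\Delta\delta u\,dx$. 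After absorbing $\|\Delta\delta u\|_{L^2}^2$ into the dissipation, this requires controlling $\int_0^t\|B^c\|_{L^\infty}^2\|\nabla\delta B\|_{L^2}^2\,d\tau$; but at the $L^2$ level $\|\nabla\delta B\|_{L^2}^2$ is only available in $L^1_t$ (it is the dissipation of the $\delta B$-estimate, so it cannot be fed into a Gr\"onwall argument), and $\|B^c\|_{L^\infty_{t,x}}$ is not bounded uniformly in $c$ (only $c^{-1}B^c\in L^\infty_t\dot B^{5/2}_{2,1}$). Promoting $\delta B$ to the $\dot H^1$ level to restore the cancellation is not an option either, since the hypotheses only give $\|\widetilde B_0\|_{L^2}\to0$. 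The paper's ordering is essential: first a pure $L^2\times L^2$ estimate where the cancellation is intact, then interpolation to get $\delta u\to0$ in $L^\infty_t\dot H^{1/2}\cap L^2_t\dot H^{3/2}$ and $\delta B\to0$ in $L^\infty_t\dot H^{1}\cap L^2_t\dot H^{2}$, and only then a $\dot H^1$ estimate for $\delta u$ \emph{alone} in which every nonlinear term, including $B^c\cdot\nabla\delta B$, is treated as a forcing in $L^2_tL^2$ that is already known to vanish --- no cancellation and no Gr\"onwall are needed at that stage.

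Two further points. First, handling the Maxwell remainder $\tfrac1c\partial_tE^c\times B^c$ by integrating by parts in time inside a $\dot H^1$ estimate would generate $\partial_t\Delta\delta u$, which is two derivatives beyond what is controlled; the clean tool is Proposition \ref{decay-E}, which quantifies $\tfrac1c\partial_t E^c=j^c-\nabla\times B^c\to0$ in $L^2_t\dot H^{1/2}$ and lets one estimate these remainders directly (this is also what handles $\nabla\times(\tfrac1c\partial_tE^c)=-c^{-2}\partial_{tt}B^c$ in the $\delta B$-equation after one integration by parts in space). Second, for the endpoint $s=\tfrac32$: the low-frequency block needs no energy estimate at all --- a bare interpolation of the $L^2$ convergence against the uniform $\dot H^{3/2}$ bounds already kills frequencies $|\xi|<\Theta_c^{-1/4}$ --- while for $B^c$ the high frequencies split into the hyperbolic range $|\xi|\gtrsim\sigma c$ (handled by \eqref{assumption:high:frequencies} and the $\widetilde L^2_t\dot B^{5/2}_{2,1,>}$ bound) and the intermediate range $\Theta_c^{-1/4}\lesssim|\xi|\lesssim\sigma c$, which is \emph{not} reached by interpolating the $\dot B^{5/2}_{2,1}$ bounds: it requires the Duhamel/Strichartz estimates of Lemma \ref{cor:maxwell} together with the assumed $\dot H^{3/2}$ convergence of the initial data and a gain of $\Theta_c^{1/8}$ from the frequency localization of the source $P(u^c\times B^c)$. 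Your sketch conflates these two high-frequency regimes.
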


The complete proof of Theorem \ref{Thm:CV} is given in Section \ref{section:closing:CV}.

 \begin{rem} It is possible to quantify \eqref{CV-u:statement} and \eqref{CV-B:statement} with a rate of convergence $O(c^{-\alpha})$, for some $\alpha>0$, provided that the initial data satisfy a similar algebraic rate  of convergence. 
 \end{rem}
 
 \begin{rem}
 Observe that the initial data in Theorem \ref{Thm:1} are required to enjoy the regularity $ \dot{B}^{\frac{5}{2}}_{2,1}$. However, this is not needed  in the limiting system obtained in the regime $c\rightarrow \infty$,  as reflected in   Corollary \ref{Thm:MHD}.

  Moreover, the growth of the global solution from Theorem \ref{Thm:1} in that space is   of order $c$, which is consistent with the fact that it is uniformly bounded in the space $\dot{H}^\frac{3}{2}$.  Indeed, heuristically, we notice from our proofs, later on, that each spatial derivative has the same dimension as the speed of light $c$, i.e.,
 $$c^{-1} \dot{B}^{\frac{5}{2}}_{2,1}  \sim \dot{H}^{\frac{3}{2}} ,$$
 which means that the solutions have   a  comparable size in each space.  This is natural in view of the fact that waves produced by Maxwell's system have a characteristic speed $c$.
 
%

In conclusion, we supplement our remark with a typical example of   data which summarizes the preceding observations  and   fulfills   the assumptions of Theorem \ref{Thm:1} and Theorem \ref{Thm:CV}.  To that end, let $ \varphi\in C^\infty_c(\mathbb{R}^3)$ and introduce the standard mollifier 
$$ \varphi_c(\cdot) \bydef c^{3} \varphi(c\,\cdot).$$
Let $ (u_0,B_0)$ be the initial data from Corollary \ref{Thm:MHD} and $ E_0$ be any divergence-free profile in $ H^\frac{3}{2}.$  Then the sequence of data defined by
\begin{equation}\label{example1}
(u_0^c,E_0^c,B_0^c)  \bydef  \varphi_c * (u_0,0,B_0)
\end{equation}   
is suitable and satisfies all the assumptions in Theorem \ref{Thm:1} and Theorem \ref{Thm:CV}, as long as $c>c_0$, where $c_0>0$ only depends  on the size of  $u_0$, $E_0$, $B_0$ and $\varphi.$ 
 \end{rem}
 \subsection{Challenges and   ingredients of proof}\label{strategy of the proof} 
Here, we provide the reader with a short roadmap that sheds   light on our strategy and main ingredients towards proving our results. We already made it clear in the introduction that the construction of a (unique global) solution to \eqref{Maxwell:system:*2} at the level of the energy \eqref{energy-inequa} is an outstanding open problem. Hence, we seek      solutions in higher-regularity spaces. 

Notice that one cannot expect to have a better understanding of the Cauchy problem for   \eqref{Maxwell:system:*2} compared to the Navier--Stokes equations (i.e., the case  $E\equiv B \equiv 0$).  In the three-dimensional case, one of the well known settings where the Navier--Stokes equations are globally well-posed is when the initial data obey an axisymmetric geometric condition, which, roughly speaking, reduces the dimension from three to two, in some sense. In this work, we therefore make the choice of restricting ourselves to the case of axisymmetric data.

We first clarify the key idea leading to an estimate of the velocity field, which can be performed on   \eqref{omega-equa}--\eqref{Omega-equa} or \eqref{omega-equa-form2}--\eqref{Omega-equa-form2}. We argue now that it is better to work on   \eqref{omega-equa-form2}--\eqref{Omega-equa-form2}. The reasons behind this choice can be summarized in two crucial points. The first one is that the terms containing $\frac{1}{c}\partial_t E$ can be seen as an error   that should vanish and be discarded from the system when $c\rightarrow \infty$, at least formally. Observe that this formal limit yields exactly the equations for the vorticity in the axisymmetric case, which have been studied in several papers (for instance, see \cite{ZH}). Accordingly, we believe that   \eqref{omega-equa-form2}--\eqref{Omega-equa-form2} are more suitable to study the equations in the regime $c\rightarrow \infty$. We shall come back to this limit, later on, to comment on the formal claim \begin{equation}\label{E->0}
\lim_{c \rightarrow \infty} \frac{1}{c}\partial_t E =0.
\end{equation}   The second reason why we choose to consider \eqref{omega-equa-form2}--\eqref{Omega-equa-form2}   is more technical and it relies on the fact that  the  $L^2$ energy estimate \eqref{energy-inequa}  provides us with   weak information in dimension three, unlike the two dimensional case. Indeed, our alternative choice would be to consider \eqref{omega-equa}--\eqref{Omega-equa}
and to   estimate $j$ in $L^2_{t,x}$, which  is     the only    global bound on $j$ that can be extracted from the energy estimate. By doing so, one can can only establish an estimate of $(\omega,\Omega)$ in $L^\infty_t L^2_x\cap L^2_t\dot{H}^1_x$, which is linear in terms of the   $L^2_t B^{\frac{5}{2}}_{2,1}$ norm of $B$. This is   comparable  to the techniques from \cite{ah} where, in two dimensions, it is shown that such    estimates can be closed. However, in three dimensions, if one goes into the details of   low-frequency estimates, one notices that the interpolation argument used in \cite{ah} to weaken the power of   $B$ in some crucial norms will likely not work. It seems that this method  would  require a global control of $j$ in $L^2_t L^3_x$, which is not available. 

Thus, our main estimate  on the velocity field is based on \eqref{omega-equa-form2}--\eqref{Omega-equa-form2} and is given in Proposition \ref{prop.velocity-ES}, below, where we show that
 \begin{equation*}
	 \begin{aligned}
	    &\norm{(\omega,\Omega)}_{L^\infty_tL^2 \cap L^2_t\dot{H}^1 }
	     \\
		 &\qquad\lesssim   \left( \norm{(\omega_0,\Omega_0)}_{L^2}   +  \norm { \frac{1}{c}\partial_t E}_{L^2_t\dot{H}^\frac{1}{2}  }   \norm {B}_{L^\infty_tH^2   }     + \norm {\Gamma}_{L^\infty_t L^3 }\norm {(B,\Gamma)}_{L^2_t \dot{H}^1 } \right)
	     \exp \big(C\mathcal{E}^2_0\big),
	 \end{aligned}
 \end{equation*} 
 for some $C>0$. Now, in order to use this bound, we need  the following ingredients:
\begin{enumerate} 
\item An asymptotically  vanishing estimate for $\frac{1}{c} \partial_t E$ of the form
$$ \frac{1}{c}\norm {\partial_t E}_{L^2_t \dot{H}^\frac{1}{2}  } \lesssim c^{-\alpha} F\Big(\norm{(u,E,B)}_X\Big),  $$
for some $\alpha>\frac{1}{2}$, some (nonlinear) function $F$ and a suitable functional space $X$.
\item A bound   of the type
$$  \norm {B}_{ L^\infty_tH^2} \lesssim c^{\frac{1}{2}} F\Big(\norm{(u,E,B)}_X\Big).$$ 
\item An asymptotically global estimate for $ B$ in $L^2_t\dot{H}^1$ and $\Gamma$ in $L^\infty_t L^3  \cap L^2_t \dot{H}^1 $   of the form 
$$\norm B_{ L^2_t \dot{H}^1 } +  \norm { \Gamma}_{L^\infty_t L^3  \cap L^2_t \dot{H}^1  } \leq C_0 + c^{-\beta} F\Big(\norm{(u,E,B)}_X\Big),$$
for some $\beta>0$ and $C_0>0$ depending  only on the initial data.
\end{enumerate}
The first bound above is inspired by the results from \cite[Section 3]{ah2}, whereas the third one is  obtained  in the spirit of \cite[Section 3.6]{ah}. 

The precise proof of these claims is the subject of Section \ref{section:asymptotics} where we will build on the arguments  from \cite{ah,ah2}. For simplicity, the reader can think of the space $X$ in the foregoing estimates as a combination of the spaces appearing in the statement of Theorem \ref{Thm:1}. The precise construction of $X$ is also detailed  in Section \ref{section:asymptotics}. 

 The next step in our strategy is to study the Maxwell system 
\begin{equation}\label{Maxwell:system:*****}
		\begin{cases}
			\begin{aligned}
				\frac{1}{c} \partial_t E - \nabla \times B + \sigma c E & =- \sigma P( u \times B),
				\\
				\frac{1}{c} \partial_t B + \nabla \times E & = 0,
				\\
				\div u=\div B  = \div E& =0 .
			\end{aligned}
		\end{cases}
	\end{equation} 
The relevant   estimates for $(E,B)$  with a general forcing term have recently been established   in  \cite{ah}.   They are reproduced in Lemmas \ref{cor:maxwell} and   \ref{cor:parabolic:maxwell}. Nevertheless, some refinements  in the analysis of \eqref{Maxwell:system:*****} are required in order  to obtain adequate bounds for the electromagnetic field which are compatible  with the       asymptotic behavior of $ \frac{1}{c}\partial_t E$, $B$ and $\Gamma$. 

As a starting point, we will be facing the problem of estimating products of the form 
$$\norm { P(u \times B)}_{B^s_{p,q}},$$
with $s>\frac{3}{p}$, where  the regularity of $u$ is restricted to $ L^\infty_t \dot{H} ^1 \cap L^2_t \dot{H}^2$. Note that a similar issue was overcome in \cite{ah} by exploiting  the two dimensional normal structure \eqref{structure:2dim}, which is not valid in our context. Here, we show that  a similar analysis can be performed in the axisymmetric case, by using the fact that $B$ remains orthogonal to $E$ and $u$.  Accordingly, in Lemma \ref{paradifferential:1}, we provide  a  general result  refining   the classical paraproduct estimates in a framework which covers the axisymmetric setting.

 The ability to prove    Theorem \ref{Thm:1} further requires a precise understanding of the  damping phenomenon in  \eqref{Maxwell:system:*****}, which is obtained by studying the parabolic--hyperbolic properties of fields, their interaction and their behavior    relatively to $c$. This analysis is laid out in detail in  Section \ref{section.EB:ES}.

 With the above ingredients, we are then in a position to conclude Theorem \ref{Thm:1}. Thus, in Section \ref{Section:closing the estimates}, we gather all the estimates   to establish a   nonlinear bound of the form
$$ \norm {(u,E,B)}_{X} \leq C_0 + \mathcal{P}\left(  \norm {(u,E,B)}_{X} \right),$$
where $\mathcal{P}$ is a polynomial whose coefficients vanish as $ c\rightarrow \infty$. The conclusion of the   global estimates is then a straightforward application of the abstract Lemma \ref{fix-point:lem} which ensures that 
$$  \norm {(u,E,B)}_{X} \leq 2C_0,$$
as long as $c$ is larger than some power of $ C_0$.  

Note that the proof of Theorem \ref{Thm:1} is divided into two parts. The first one deals with the case that we call \textit{rough profiles,}  in the spirit of the example of initial data given in \eqref{example1}. In that case, the initial electromagnetic field $(E_0^c,B_0^c)$ can have a $ \dot{B}^{\frac{ 5}{2}}_{2,1}$ norm growing at most like $c$. In the second part of the proof, in the case of \textit{regular profiles,} we show that if, moreover, the corresponding $ \dot{B}^{\frac{5}{2}}_{2,1}$ norm of $(E_0^c,B_0^c)$ does not blow up as $c$ goes to infinity, then,     the  $ \dot{B}^{\frac{ 5}{2}}_{2,1}$ norm of $(E^c(t),B^c(t))$ remains bounded, for any $t\in [0,\infty)$. The second case will be proved with a kind of   bootstrap argument and it can be seen   as the  result of    persistence of   initial regularity.

Our second main result (Theorem \ref{Thm:CV}) establishes the convergence toward \eqref{MHD*} of the solution constructed in Theorem \ref{Thm:1}. It is to be emphasized that a fundamental ingredient in the proof of Theorem \ref{Thm:CV} hinges upon the understanding of the limit \eqref{E->0}. This is crucial to obtain a  convergence result in the whole domain $ [0,\infty)\times \mathbb{R}^3$. 

The proof of   Theorem \ref{Thm:CV} will be done in several steps. Firstly, we prove the  convergence  in the $L^2$ energy space by performing  adequate   stability estimates. Subsequently, by interpolation, it follows that the convergence  of  velocities and   magnetic fields  holds in $L^\infty_t \dot{H}^s \cap L^2_t \dot{H}^{s+1}$ and  $L^\infty_t \dot{H}^{\frac{3s}{2}} \cap L^2_t \dot{H}^{\frac{3s}{2}+1}$, respectively, for any $s\in [0,1)$. 

Therefore, the convergence of   velocity fields in the  endpoint space $L^\infty_t \dot{H}^1 \cap L^2_t \dot{H}^{2}$ will be achieved by an energy estimate in $\dot{H}^1$ and by making use of the stability results from the previous step. Here, it is important to mention that the validity of \eqref{E->0} in $L^2_t\dot{H}^\frac{1}{2}$ (see Proposition \ref{decay-E}) is crucial.

 On the other hand, the convergence of the magnetic fields in the endpoint space  $L^\infty_t \dot{H}^{\frac{3}{2}} \cap L^2_t \dot{H}^{\frac{3}{2}+1}$ will be established by a different approach, for a standard $\dot{H}^\frac{3}{2} $ energy estimate would require the validity of  \eqref{E->0}   in $L^2_t\dot{H}^\frac{3}{2}$  which is not available from Proposition \ref{decay-E}. More precisely, the   convergence of   magnetic fields in  that space  follows from extrapolation compactness techniques, introduced and utilized by the first and third authors in \cite{ah2}. 
 
 In summary, the  proof of convergence of magnetic fields in the energy space of $ \dot{H}^{\frac{3}{2}}$  is split into two main steps. In the first step, we treat the convergence of  frequencies that are localized in a ball whose radius grows as the speed of light increases. By suitably choosing that radius, the convergence of low frequencies then follows as a direct consequence of the convergence in the $L^2$ energy space.
 In the second step, by exploiting the assumption \eqref{assumption:high:frequencies}, we take care of the remaining high frequencies by building on the refined analysis of Maxwell equations laid out in Section \ref{section.EB:ES}.
The combination of these ideas eventually leads to the completion of all proofs.

\subsection{Notation}

All definitions and basic properties of  functional spaces  utilized throughout the paper are introduced in Section \ref{section:axi and para product}.

Furthermore, the letter $C$ will often denote a universal (possibly large) constant that is independent of the variables of the problem, and which is also allowed to change from one line to the next. 

 For simplicity, we will also be using 
$ A\lesssim B$ instead of $A\leq CB.$ Moreover, when needed, in order to specify the dependence of some estimates on some parameters, we will occasionally utilize
$ A\lesssim_s B$ to insist on the fact that the generic constant $C$ might depend on a parameter $s$.

\section{The axisymmetric structure, Hardy inequalities and paradifferential calculus   }
\label{section:axi and para product}

In this section, we establish several lemmas which shed light on crucial features of the axisymmetric structure, which are similar to the properties of the two-dimensional normal structure which is exploited in \cite{ah,ah2}. In particular, this structure will be  employed to obtain useful improvements on the classical paradifferential product laws and  will serve in our main a priori estimates, later on.

First of all, we  recall that  a vector field $F : \mathbb{R} ^3\to\mathbb{R}^3$ is  axisymmetric if it has the form
\begin{equation*}
F(x_1,x_2,x_3)= F_r(r,z)e_r+F_\theta(r,z)e_\theta+F_z(r,z)e_z,
\end{equation*}
where the triple $(r,\theta,z)$ denotes the usual cylindrical coordinates defined by the relations
\begin{equation*}
	x_1=r\cos\theta,\quad x_2=r\sin\theta,\quad x_3=z,
\end{equation*}
and $(e_r,e_\theta,e_z) $ is the corresponding cylindrical orthonormal basis
$$ e_r= \Big(\frac{x_h}{r},0\Big), \quad e_\theta= \Big(\frac{{x_h^\perp}}{r},0\Big), \quad e_\theta= (0,0,1).$$
Here, the index $h$ is used to refer to the horizontal components
$$x_h \bydef (x_1,x_2), \quad x_h^\perp \bydef (-x_2,x_1).$$
Thus, axisymmetry is characterized by the property that the components $(F_r,F_\theta,F_z)$ are independent of $\theta$. In other words, the field $F$ is axisymmetric if and only if it satisfies $R\circ F=F\circ R$, for all rotations $R$ around the $z$-axis.

Furthermore, we say that the vector field $F : \mathbb{R} ^3\to\mathbb{R}^3$ is  axisymmetric  without swirl if   it has the form
\begin{equation*}
F(x_1,x_2,x_3)= F_r(r,z)e_r+F_z(r,z)e_z,
\end{equation*}
and that it is axisymmetric with pure swirl if it can be represented as
\begin{equation*}
F(x_1,x_2,x_3)= F_\theta (r,z)e_\theta.
\end{equation*}
Observe that an axisymmetric field with pure swirl is always divergence free, which follows from a straightforward computation.

We take some time now to carefully introduce the functional spaces which we use in this article and some related notation. To that end, we first consider the Littlewood--Paley decomposition
\begin{equation*}
	\sum_{k\in\mathbb{Z}}\Delta_{ k}f=f
\end{equation*}
of a tempered distribution modulo polynomials $f\in\mathcal{S}'/\mathcal{P}(\mathbb{R}^d)$, in any dimension $d\geq 1$, where the operator $\Delta_k$ is the classical frequency truncation which restricts the support of the Fourier transform
\begin{equation*}
	\mathcal{F}f\left(\xi\right)=\hat f(\xi)\bydef\int_{\mathbb{R}^d} e^{- i \xi \cdot x} f(x) dx
\end{equation*}
to the set $\{2^{k-1}\leq |\xi|\leq 2^{k+1}\}$, for each $k\in\mathbb{Z}$. Recall that the space $\mathcal{S}'/\mathcal{P}(\mathbb{R}^d)$ is isomorphic to the space $\mathcal{S}_0'(\mathbb{R}^d)$ of tempered distribution restricted to the subspace $\mathcal{S}_0(\mathbb{R}^d)$, which is made up of all Schwartz functions $\varphi\in\mathcal{S}(\mathbb{R}^d)$ such that
\begin{equation*}
	\int_{\mathbb{R}^d}x^\alpha \varphi(x)dx=0,
\end{equation*}
for every mutli-index $\alpha\in\mathbb{N}^d$.  It is always possible to construct $\Delta_k$ so that it acts on tempered distributions through a convolution with a radial smooth function in $\mathcal{S}_0(\mathbb{R}^d)$, which is appropriately dilated by a factor $2^k$.

Then, for any $s \in \mathbb{R}$ and $1\leq p,q,r\leq \infty$, we define the homogeneous Besov space $\dot B^{s}_{p,q}\left(\mathbb{R}^d\right)$ and the homogeneous Chemin--Lerner space $\widetilde L^r \left( [0,T) ; B^{s}_{p,q}\left(\mathbb{R}^d\right) \right)$, with $T\in (0,\infty]$, as the subspaces of tempered distributions modulo polynomials $\mathcal{S}'/\mathcal{P}$ endowed with the respective norms
\begin{equation*}
	\begin{aligned}
		\left\|f\right\|_{\dot B^{s}_{p,q}\left(\mathbb{R}^d\right)}
		&\bydef
		\left(
		\sum_{k\in\mathbb{Z}} 2^{ksq}
		\left\|\Delta_{k}f\right\|_{L^p\left(\mathbb{R}^d\right)}^q\right)^\frac{1}{q},
		\\
		\left\|f\right\|_{ \widetilde L^r \left( [0,T) ; B^{s}_{p,q}\left(\mathbb{R}^d\right) \right)}
		&\bydef
		\left( \sum_{k\in\mathbb{Z}} 2^{ksq}
		\left\|\Delta_{k}f\right\|_{L^r\left([0,T);L^p\left(\mathbb{R}^d\right)\right) }^q\right)^\frac{1}{q},
	\end{aligned}
\end{equation*}
if $q<\infty$, and with the usual modifications if $q=\infty$.

We will also employ the homogeneous Sobolev spaces $\dot{W}^{s,p}\left(\mathbb{R}^d\right)\subset \mathcal{S}'/\mathcal{P}$ which are defined by the semi-norms
$$\norm {f }_{\dot{W}^{s,p}\left(\mathbb{R}^d\right)}  \bydef \big\||D|^s  f\big\|_{L^p\left(\mathbb{R}^d\right)}, $$
where $ |D|^s  $ is the Fourier multiplier corresponding to the symbol $ |\xi|^s$, with $s\in\mathbb{R}$ and $1<p<\infty$. When $p=q=2$, note that the Besov space $\dot B^s_{2,2}$ is equivalent to the homogeneous Sobolev space classically denoted by
\begin{equation*}
	\dot H^s\bydef\dot W^{s,2}.
\end{equation*}
Nonhomogeneous versions of these spaces are also defined in a similar way.

Finally, employing truncated Littlewood--Paley decompositions, one can show that the subspace $\mathcal{S}_0$ is dense in $\dot B^s_{p,q}$ and $\dot W^{s,p}$, for any $s\in \mathbb{R}$ and $1\leq p,q<\infty$, and that a similar statement holds for Chemin--Lerner spaces with suitable modifications.

We refer to \cite[Appendix A]{ah} for some more details and properties of Besov and Chemin--Lerner spaces in the same notation as in this article, and to \cite{bcd11,g14:2} for a comprehensive introduction to the subject of Littlewood--Paley decompositions and functional spaces.

We move on now to the main results of this section. Prior to presenting the paradifferential features of the axisymmetric structure, we establish the following version of Hardy's inequality which will be useful in our analysis, below.

\begin{lem}\label{lemma-f/x}
Fix the dimension $d\geq 2$.
Let $f$ be a smooth function satisfying, for all $ x' \in \mathbb{R}^{d-1}$, that
$$f(0,x')=0.$$ 
Then, it holds that 
\begin{equation*}
\norm {\frac{f}{x_1} }_{\dot W^{s,p}(\mathbb{R}^d)} \lesssim _{s,p}   \norm {f}_{\dot W^{s+1,p}(\mathbb{R}^d)},
\end{equation*}
for any $ p\in (1,\infty)$ and all $s> \frac{1}{p}-1$.
\end{lem}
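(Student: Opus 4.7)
The core plan is to exploit the vanishing condition $f(0,x')=0$ to rewrite $f/x_1$ as an average of a rescaled derivative of $f$, and then to leverage the scaling behaviour of the Sobolev norm under anisotropic dilations in the first variable. Specifically, I would use the fundamental theorem of calculus, followed by the substitution $s=tx_1$, to obtain
\begin{equation*}
\frac{f(x_1,x')}{x_1} = \int_0^1 (\partial_1 f)(tx_1,x')\,dt = \int_0^1 (\partial_1 f)_t\,dt,
\end{equation*}
where I denote $h_t(x_1,x'):=h(tx_1,x')$ the anisotropic dilation acting solely on the first variable, and the identity is valid for any sign of $x_1$. Applying Minkowski's integral inequality in the Banach space $\dot W^{s,p}$ then reduces the task to uniformly controlling the dilated derivative.

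The main ingredient to establish is the scaling bound
\begin{equation*}
\|h_t\|_{\dot W^{s,p}(\mathbb{R}^d)} \lesssim_{s,p} t^{-1/p}\,\|h\|_{\dot W^{s,p}(\mathbb{R}^d)},\quad t\in(0,1].
\end{equation*}
For integer $s\geq 0$ this is immediate from the chain rule: each $x'$-derivative of $h_t$ produces the worst factor $t^{-1/p}$ coming from $L^p$-scaling alone, whereas each $x_1$-derivative of $h_t$ yields the smaller factor $t^{1-1/p}\leq t^{-1/p}$ (using $t\leq 1$ and $p>1$). Fractional $s\geq 0$ then follows by complex interpolation between consecutive integer endpoints.

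The main technical obstacle is to push the scaling inequality into the negative range $s\in(1/p-1,0)$, where the direct differentiation argument no longer works. In that regime the threshold $s>1/p-1$ is sharp: it is precisely the trace condition under which $\dot W^{s+1,p}(\mathbb{R}^d)$ admits a well-defined restriction to the hyperplane $\{x_1=0\}$, so that the hypothesis $f(0,x')=0$ even makes sense. I would handle this range either by a Littlewood--Paley decomposition in the $x_1$ variable combined with sharp one-dimensional Hardy estimates on each dyadic annulus, or else by a duality pairing against $\dot W^{-s,p'}$ in which the trace well-definedness is used to transfer the weight $1/x_1$ onto the test function.

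Once the scaling bound is in place, assembling the pieces yields
\begin{equation*}
\left\|\frac{f}{x_1}\right\|_{\dot W^{s,p}} \leq \int_0^1\|(\partial_1 f)_t\|_{\dot W^{s,p}}\,dt \lesssim \|\partial_1 f\|_{\dot W^{s,p}}\int_0^1 t^{-1/p}\,dt \lesssim \|f\|_{\dot W^{s+1,p}},
\end{equation*}
where the final step uses boundedness of the Riesz transform on $\dot W^{s,p}$ for $1<p<\infty$, and the $t$-integral $\int_0^1 t^{-1/p}\,dt = p/(p-1)$ is finite precisely because $p>1$.
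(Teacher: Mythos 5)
Your overall strategy is the same as the paper's: write $f/x_1=\int_0^1(\partial_1 f)(tx_1,x')\,dt$, apply Minkowski's inequality, and control the $\dot W^{s,p}$ norm of the anisotropic dilation $h_t(x)=h(tx_1,x')$. The gap is in the scaling bound you propose as the main ingredient: the inequality $\norm{h_t}_{\dot W^{s,p}}\lesssim t^{-1/p}\norm{h}_{\dot W^{s,p}}$ is false for $s<0$. Taking $\hat h$ supported in $\{|\xi_1|\sim N,\ |\xi'|\lesssim 1\}$, one has $\widehat{h_t}(\xi)=t^{-1}\hat h(\xi_1/t,\xi')$ supported where $|\xi_1|\sim tN$, so for $tN\gg 1$ the ratio $\norm{h_t}_{\dot W^{s,p}}/\norm{h}_{\dot W^{s,p}}$ is comparable to $t^{s-1/p}$, which for $s<0$ blows up strictly faster than $t^{-1/p}$. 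The correct uniform bound is $\norm{h_t}_{\dot W^{s,p}}\lesssim \max\{1,t^{s}\}\,t^{-1/p}\norm{h}_{\dot W^{s,p}}$, and this is precisely where the hypothesis $s>\frac 1p-1$ must enter: in the negative range the final integral becomes $\int_0^1 t^{s-1/p}\,dt$, which converges if and only if $s>\frac 1p-1$. Your concluding computation $\int_0^1 t^{-1/p}\,dt=p/(p-1)$ never uses that hypothesis, which is a warning sign that the argument as assembled would ``prove'' the lemma for every $s$, and that cannot be right since the statement degenerates below the trace threshold.

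The paper closes exactly this step with an explicit multiplier computation: after the change of variables producing the factor $\lambda^{-1/p}$, the discrepancy between the isotropic symbol $|\xi|^{s}$ and its anisotropic rescaling is the multiplier $m_{\lambda,s}(\xi)=\big((\lambda^2\xi_1^2+|\xi'|^2)/|\xi|^2\big)^{s/2}$, whose $L^p$ operator norm is shown via the Marcinkiewicz--Mikhlin theorem to be $\lesssim\max\{1,\lambda^{s}\}$. If you want to keep your outline, replace the claimed $t^{-1/p}$ bound by $\max\{1,t^{s}\}\,t^{-1/p}$ and actually prove it in the range $s\in(\frac 1p-1,0)$; your two suggested routes (Littlewood--Paley in $x_1$ with one-dimensional Hardy estimates, or duality) are only sketched and would in any case have to produce this sharper, $s$-dependent constant rather than the one you wrote.
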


\begin{proof}
By virtue of   the vanishing assumption on $f$, we can write, for any $(x_1,x') \in \mathbb{R}^*\times \mathbb{R}^{d-1}$,  that
\begin{equation*}
\frac{f(x_1,x')}{x_1}= \int_0^1 (\partial_{1} f)(\lambda x_1,x')d\lambda  .
\end{equation*}
It then follows that
\begin{equation*}
	\begin{aligned}
		\norm {\frac{f}{x_1} }_{\dot W^{s,p}} &\leq \int_0^1 \big\||\partial_1 f(\lambda \cdot, \cdot)\big\|_{\dot W^{s,p}}d\lambda
		\\
		&=\int_0^1 \big\| |D|^s  (\partial_{1} f)(\lambda \cdot, \cdot) \big\|_{L^p}  d\lambda\\
		&=   \int_0^1 \lambda^{-\frac{1}{p}} \big\| |D|^s  m_{\lambda,s}(D)      (\partial_{1} f)  \big\|_{L^p}  d\lambda\\
		&=  \int_0^1 \lambda^{-\frac{1}{p}} \big\|   m_{\lambda,s}(D)      \partial_{1} f  \big\|_{ \dot{W}^{s,p}}  d\lambda,
	\end{aligned}
\end{equation*}
where $m_{\lambda,s}(D) $ is the Fourier multiplier operator given, for any $\lambda \in (0,1)$ and $s\in\mathbb{R}$,  by
$$  m_{\lambda,s} (\xi)= \left(\frac{   \sqrt{|\lambda \xi_1 |^2 + |\xi'|^2} }{|\xi|}\right)^s.$$
Now, we claim that the multiplier norm of $m_{\lambda,s}(D)$ over $L^p$ is bounded by a constant multiple of $\max\{1,\lambda^s\}$, which leads to
\begin{equation*}
\begin{aligned}
\norm {\frac{f}{x_1} }_{\dot{W}^{s,p}}
&\lesssim    \int_0^1 \lambda^{-\frac{1}{p}}  \max\{ 1, \lambda ^s\}d\lambda
\norm  {    \partial_{1} f  }_{ \dot{W}^{s,p}}
\\
&\lesssim
\max \left\{ \frac{p}{p-1} , \frac{1}{s+1 - \frac{1}{p}} \right\}     \norm  {     f  }_{ \dot{W}^{s+1,p}} ,
\end{aligned}
\end{equation*}
provided that $ s> \frac{1}{p} -1$, thereby establishing the main estimate of the lemma.

In order to justify the boundedness of $m_{\lambda,s}(D)$ over $L^p$, it is sufficient, by the Marcinkiewicz--Mikhlin multiplier theorem (see \cite[Theorem 6.2.4]{g14}), to show that
\begin{equation}\label{multiplier:estimate}
	\sup_{\xi\in\mathbb{R}^d}\left|\xi^\alpha\partial_\xi^\alpha m_{\lambda,s}(\xi)\right|
	\lesssim \max\{ 1, \lambda ^s\},
\end{equation}
for all multi-indices $\alpha\in \{0,1\}^d$. To that end, we introduce the notation
\begin{equation*}
	\ell_{i,\lambda} (\xi) \bydef  \frac{\xi_i^2 }{  \lambda^2| \xi_1 |^2 + |\xi'|^2 }
\end{equation*}
and compute that
\begin{equation*}
	\begin{aligned}
		\xi_1\partial_{\xi_1} m_{\lambda,s} (\xi) &= s \ell_{1,1}(\xi)
		\Big( \lambda^2  m_{\lambda,s-2} (\xi) -  m_{\lambda,s} (\xi) \Big),
		\\
		\xi_i\partial_{\xi_i} m_{\lambda,s} (\xi) &= s  \left(\ell_{i,\lambda} (\xi) - \ell_{i,1} (\xi) \right)   m_{\lambda,s} (\xi) ,
		\\
		\xi_1\partial_{\xi_1}\ell_{j,\lambda}(\xi)&= 2 \left( \delta_{1j}    - \lambda^2 \ell_{1,\lambda}(\xi) \right) \ell_{j,\lambda}(\xi),
		\\
		\xi_i\partial_{\xi_i}\ell_{j,\lambda}(\xi)&= 2 \left( \delta_{ij}    - \ell_{i,\lambda}(\xi) \right) \ell_{j,\lambda}(\xi),
	\end{aligned}
\end{equation*} 
for any integers $i\in [2,d]$ and $j\in [1,d]$, where $\delta_{ij}$ denotes the usual Kronecker delta. Therefore, iterating the preceding calculations and observing that
 \begin{equation*}
 	\left|m_{\lambda,s} (\xi) \right|
	+\left|\lambda^{2}  m_{\lambda,s-2} (\xi) \right| \lesssim \max\{1, \lambda^s \},
	\qquad
	\left|\lambda^2\ell_{1,\lambda}\right|+\left|\ell_{i,\lambda}(\xi)\right|\lesssim 1,
\end{equation*}
for all integers $i\in [2,d]$, it is readily seen that \eqref{multiplier:estimate} holds true, which establishes that the operator norm of $m_{\lambda,s}(D)$ is controlled by $\max\{1,\lambda^s\}$. This completes the proof of the lemma.
\end{proof}

\begin{rem}
Note that the assumption on the smoothness of the function $f$ in Lemma \ref{lemma-f/x} can in fact be relaxed.
Indeed, as described, for instance, in Theorem 6.6.1 from \cite {bl76}, the trace operator
$$ \mathrm{Tr} : \mathcal{S}_0(\mathbb{R}^d) \rightarrow \mathcal{S}_0(\mathbb{R}^{d-1}) $$
defined by
$$ (\operatorname{Tr}f )(x_1,x')\bydef f(0,x') ,\quad\text{where } (x_1,x')\in \mathbb{R}\times \mathbb{R}^{d-1},$$
has a well defined extension into a bounded operator from $ W^{s+1,p}(\mathbb{R}^d)$  into  $ B^{s +1- \frac{1}{p}}_{p,p}(\mathbb{R}^{d-1})$, for any $p\in (1,\infty)$, as soon as  the condition $ s> \frac{1}{p}-1$ is satisfied.
Hence, for $f\in W^{s+1,p}(\mathbb{R}^d)$,  the condition $f(0,x')\equiv 0 $ in Lemma \ref{lemma-f/x} can be replaced by $ \operatorname{Tr}f  = 0. $
\end{rem}

The next lemma is a variant of  Lemma \ref{lemma-f/x} for vector fields with an axisymmetric structure. It will be employed to control specific quantities involving  electromagnetic fields, later on.  The statement of the lemma below is written in the notation introduced at the beginning of this section.

\begin{lem}\label{lemma-f/x:2}
Let $E$ be an axisymmetric divergence-free vector field without swirl and $B$  be an axisymmetric   vector field with pure swirl.

Then, it holds that
\begin{equation*}
\norm {\frac{\nabla \times E}{r}}_{L^p} \lesssim  \norm E_{\dot W^{2,p}}
\end{equation*}
and
\begin{equation*}
\norm {\frac{B_\theta}{r}}_{\dot W^{s,p}} \lesssim_s \norm B_{\dot W^{s+1,p}},
\end{equation*}
for any $ p\in (1,\infty)$ and all $s> \frac{1}{p}-1$.
\end{lem}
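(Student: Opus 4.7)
The plan is to reduce both statements to the previously established Hardy inequality (Lemma \ref{lemma-f/x}) by passing from cylindrical components to Cartesian components, and exploiting the fact that axisymmetric fields with the prescribed swirl structure force certain Cartesian components to vanish on the coordinate hyperplane $\{x_1=0\}$.

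For the second estimate, I would start from the Cartesian expression of an axisymmetric pure-swirl field: since $e_\theta=(-x_2/r,\,x_1/r,\,0)$, one has
\begin{equation*}
B_2(x)=\frac{x_1}{r}\,B_\theta(r,z),
\end{equation*}
so that $B_\theta/r=B_2/x_1$ as tempered distributions (the hyperplane $\{x_1=0\}$ being of measure zero). For smooth axisymmetric pure-swirl fields, $B_\theta(0,z)\equiv 0$, hence $B_2$ vanishes identically on $\{x_1=0\}$; for $B\in\dot W^{s+1,p}(\mathbb{R}^3)$ with $s>\frac1p-1$, the trace $\mathrm{Tr}\,B_2$ on $\{x_1=0\}$ still vanishes (by density of smooth axisymmetric profiles together with continuity of the trace operator, exactly as in the remark following Lemma \ref{lemma-f/x}). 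Applying Lemma \ref{lemma-f/x} to $f=B_2$ (in the trace-zero version) and using $\|B_2\|_{\dot W^{s+1,p}}\leq \|B\|_{\dot W^{s+1,p}}$ (since extracting a Cartesian component commutes with the Fourier multiplier $|D|^{s+1}$) yields the desired bound.

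The first estimate then follows from the second. Indeed, since $E=E_r e_r+E_z e_z$ is axisymmetric and without swirl, a direct computation shows that $\nabla\times E$ is itself axisymmetric with pure swirl, namely
\begin{equation*}
\nabla\times E=(\partial_z E_r-\partial_r E_z)\,e_\theta.
\end{equation*}
Applying the second estimate with $s=0$ (which is admissible as $0>\frac1p-1$ for any $p\in(1,\infty)$) to the vector field $\nabla\times E$ gives
\begin{equation*}
\Bigl\|\frac{\nabla\times E}{r}\Bigr\|_{L^p}\lesssim \|\nabla\times E\|_{\dot W^{1,p}}\lesssim \|E\|_{\dot W^{2,p}},
\end{equation*}
where the last inequality is immediate from the fact that $\nabla\times$ is a first-order Fourier multiplier.

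The only delicate point is the passage from the pointwise vanishing $B_\theta(0,z)\equiv 0$ for smooth fields to the trace-zero condition on $\{x_1=0\}$ at the regularity $\dot W^{s+1,p}$; this is handled by approximating $B$ by smooth axisymmetric pure-swirl fields (using, e.g., convolution with a radially symmetric mollifier in the $(x_1,x_2)$-plane, which preserves the axisymmetric pure-swirl structure) and invoking continuity of the trace operator in the range $s>\frac1p-1$. Everything else is an algebraic identification of components and a direct application of the already-proven Hardy inequality.
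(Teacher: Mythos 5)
Your treatment of the second estimate coincides with the paper's: both identify $\frac{B_\theta}{r}=\frac{B_2}{x_1}$ via $B_2=\frac{x_1}{r}B_\theta$, observe the vanishing of $B_2$ on $\{x_1=0\}$, and invoke Lemma \ref{lemma-f/x} together with a density argument (the paper is terser about the approximation step, but the mollification-by-radial-kernels argument you sketch is exactly what its remarks supply). For the first estimate, however, you take a genuinely different route. The paper does \emph{not} deduce the $E$-bound from the $B$-bound; instead it derives the pointwise identity
\begin{equation*}
\frac{\nabla\times E}{r}=-e_r\cdot\nabla\left(\nabla\times E\right)-(\Delta E_z)\,e_\theta,
\end{equation*}
valid for axisymmetric divergence-free fields without swirl, and estimates the right-hand side directly in $L^p$. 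Your argument — note that $\nabla\times E$ is axisymmetric with pure swirl, apply the already-proved second estimate with $s=0$, and use boundedness of Riesz transforms to pass from $\norm{\nabla\times E}_{\dot W^{1,p}}$ to $\norm{E}_{\dot W^{2,p}}$ — is correct and arguably cleaner: it recycles the Hardy-type bound rather than introducing a separate structural identity, and it does not use the divergence-free hypothesis at all, so it proves a slightly more general statement. What the paper's identity buys in exchange is a genuinely pointwise control of $\frac{\nabla\times E}{r}$ by second derivatives of $E$, with no Fourier multiplier machinery, which in principle extends to endpoint Lebesgue exponents where Riesz transforms and the Marcinkiewicz theorem fail; for the stated range $p\in(1,\infty)$ the two proofs are equally valid.
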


\begin{rem}
	For any given axisymmetric vector field $E=E_re_r+E_ze_z$ with no swirl, a direct computation gives that the curl $\nabla\times E=(\partial_z E_r-\partial_r E_z)e_\theta$ is axisymmetric with pure swirl. Similarly, given an axisymmetric vector field $B=B_\theta e_\theta$ with pure swirl, another straightforward computation gives that the curl $\nabla\times B=-\partial_z B_\theta e_r+(\partial_rB_\theta+\frac 1r B_\theta)e_z$ is axisymmetric with no swirl.
\end{rem}

\begin{proof}
The bound on $B_\theta$ is a consequence of  Lemma \ref{lemma-f/x}. To see this, let us first assume that $B$ is smooth and has pure swirl. Then, we can write that
\begin{equation*}
	(B_1,B_2,0)=B_\theta e_\theta=B_\theta \left(\frac{-x_2}r,\frac{x_1}r,0\right)
\end{equation*}
to deduce that
\begin{equation*}
	B_1|_{x_2=0}\equiv 0, \qquad B_2|_{x_1=0}\equiv 0,
\end{equation*}
and
\begin{equation*}
\frac{B_\theta}{r}=-\frac{B_1}{x_2}= \frac{B_2}{x_1}, 
\end{equation*}
as soon as $r\neq 0.$
Therefore,  it is enough to estimate $\frac{B_1}{x_2}$ or $\frac{B_2}{x_1}$.

Then, an application of Lemma \ref{lemma-f/x} yields that
\begin{equation*}
\norm {\frac{B_\theta}{r}}_{\dot W^{s,p}} = \norm {\frac{B_2}{x_1}}_{\dot W^{s,p}} \lesssim _s \norm{B_2}_{\dot W^{s+1,p}},
\end{equation*} 
thereby establishing the desired bound on $B_\theta$ in the case of a smooth vector field. The general nonsmooth case is then obtained by a standard approximation argument.

As for the bound on $E$, it will follow from a remarkable identity valid for axisymmetric divergence-free vector fields with no swirl. Indeed, the fact that $E$ is axisymmetric without swirl allows us to write that
\begin{equation*}
	\begin{aligned}
		\nabla\times E&=\left(\partial_zE_r-\partial_rE_z\right)e_\theta,
		\\
		e_r\cdot\nabla\left(\nabla\times E\right)&=\left(\partial_r\partial_z E_r-\partial_r^2E_z\right)e_\theta,
	\end{aligned}
\end{equation*}
which, when combined with the divergence-free condition for axisymmetric fields
\begin{equation*}
	\partial_rE_r+\frac 1r E_r+\partial_zE_z=0
\end{equation*}
leads to
\begin{equation*}
	\begin{aligned}
		\frac{\nabla\times E}r+e_r\cdot\nabla\left(\nabla\times E\right)
		&=\left(\frac 1r \partial_zE_r-\frac 1r\partial_rE_z+\partial_r\partial_z E_r-\partial_r^2E_z\right)e_\theta
		\\
		&=-\left(\partial_r^2E_z+\frac 1r\partial_rE_z+\partial_z^2E_z\right)e_\theta.
	\end{aligned}
\end{equation*}
Then, identifying the action of the Laplacian on axisymmetric functions to deduce that
\begin{equation*}
	\Delta E_z=\partial_r^2E_z+\frac 1r\partial_r E_z + \partial_z^2 E_z,
\end{equation*}
we arrive at the expression
\begin{equation*}
	\frac{\nabla\times E}r=-e_r\cdot\nabla\left(\nabla\times E\right)-(\Delta E_z)e_\theta.
\end{equation*}
The bound on $E$ therefore follows from a direct estimate in $L^p$ on the preceding identity, which completes the proof of the lemma.
\end{proof}

We conclude this section with a lemma that extends the range of parameters in the classical paraproduct laws by exploiting a geometric condition which is satisfied by the axisymmetric structure. These extended paradifferential estimates will be employed to obtain important a priori estimates for the Navier--Stokes--Maxwell system \eqref{Maxwell:system:*2}, later on.

\begin{lem}\label{paradifferential:1}
	Let $F,G:[0,T)\times\mathbb{R} ^3\to\mathbb{R}^3$ be such that $\div F=0$ and
	\begin{equation}\label{geometric:condition}
		\int_{\mathbb{R}^3}\varphi(x-y)\nabla\times F(t,y)dy\quad\text{and}\quad \int_{\mathbb{R}^3}\psi(x-y)G(t,y)dy
		\quad\text{are colinear,}
	\end{equation}
	for all $t\in[0,T)$, $x\in\mathbb{R}^3$, and any radially symmetric $\varphi,\psi \in\mathcal{S}_0(\mathbb{R}^3)$ (i.e., such that $\varphi(x)$ and $\psi(x)$ only depend on $|x|$).
	Further consider parameters in $[1,\infty]$ such that
	\begin{equation*}
		\frac 1a=\frac 1{a_1}+\frac 1{a_2}
		\qquad\text{and}\qquad
		\frac 1c=\frac 1{c_1}+\frac 1{c_2}.
	\end{equation*}
	Then, recalling that $P=(-\Delta)^{-1}\curl\mathrm{curl}$ denotes Leray's projector onto solenoidal vector fields, one has the product estimate
	\begin{equation} \label{paraproduct:1}
		\norm{P(F\times G)}_{\widetilde L^a([0,T);\dot B^{s+\eta-\frac{3}{2} }_{2,c} )}
		\lesssim
		\norm{F}_{\widetilde L^{a_1}([0,T);\dot B^s_{2,c_1} )}
		\norm{G}_{\widetilde L^{a_2}([0,T);\dot B^\eta_{2,c_2} )},
	\end{equation}
	for any $s\in(-\infty,\frac{3}{2} )$ and $\eta\in (-\infty,\frac{5}{2} )$ with $s+\eta>0$.
	
	Furthermore, in the endpoint case $s= \frac{3}{2}$, one has that
	\begin{equation}\label{paraproduct:2}
		\norm{P(F\times G)}_{\widetilde L^a([0,T); \dot B^{\eta}_{2,c} )}
		\lesssim
		\norm{F}_{L^{a_1}([0,T); L^\infty  ) \cap \widetilde L^{a_1}([0,T);\dot B^{\frac{3}{2}} _{2,\infty} )}
		\norm{G}_{\widetilde L^{a_2}([0,T)\dot B^\eta_{2,c} )},
	\end{equation}
	for any $\eta\in(-\frac{3}{2} , \frac{5}{2})$.
	
	 Finally, in the case $\eta=\frac{5}{2}$, it holds that
	\begin{equation} \label{paraproduct:3}
		\norm{P(F\times G)}_{\widetilde L^a([0,T); \dot B^{s+1}_{2,c}  )}
		\lesssim
		\norm{F}_{  \widetilde L^{a_1}([0,T);\dot B^{s}_{2,c}    )}
		\norm{G}_{\widetilde L^{a_2}([0,T);\dot B^ {\frac{5}{2}}_{2,1} )},
	\end{equation}
	as soon as $ s\in (-\frac{5}{2}, \frac{3}{2})$, and the case $ s = \frac{3}{2} $ is allowed provided that $c=1$.
\end{lem}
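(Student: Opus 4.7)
The plan is to apply Bony's decomposition $F\times G=T_FG+T_GF+R(F,G)$ and to distribute the Leray projector $P$ termwise. The two outer pieces $P(T_FG)$ and $P(R(F,G))$ fall under standard paraproduct estimates and do not rely on the structural hypothesis: bounding $\|S_{k-N}F\|_{L^\infty}\lesssim 2^{k(3/2-s)}\|F\|_{\dot B^s_{2,c_1}}$ for $s<3/2$ yields control of $P(T_FG)$ in $\dot B^{s+\eta-3/2}_{2,c}$ with no upper bound on $\eta$, while a standard high-frequency summation with Young's inequality on a geometric kernel closes $P(R(F,G))$ under the single condition $s+\eta>0$. Together these two pieces already account for the constraints $s<3/2$ and $s+\eta>0$ in the statement.

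The critical step is $P(T_GF)=\sum_kP(S_{k-N}G\times\Delta_kF)$, whose direct estimate only permits $\eta<3/2$. To push the range up to $\eta<5/2$, I will use $P=-\Delta^{-1}\curl\curl$ together with $\div(\Delta_kF)=0$ and the vector identity
\begin{equation*}
\curl(A\times B)=A\div B-B\div A+(B\cdot\nabla)A-(A\cdot\nabla)B,
\end{equation*}
applied with $A=S_{k-N}G$ and $B=\Delta_kF$, which leaves three terms. The unwanted $(S_{k-N}G\cdot\nabla)\Delta_kF$ is then rewritten through the identity $(A\cdot\nabla)B=\nabla(A\cdot B)-(B\cdot\nabla)A-A\times\curl B-B\times\curl A$; the pure gradient contribution is annihilated by the outer $\curl$ in $P$, while the cross term $S_{k-N}G\times\curl(\Delta_kF)=S_{k-N}G\times\Delta_k\curl F$ vanishes pointwise by the colinearity hypothesis \eqref{geometric:condition}, since $\Delta_k$ and $S_{k-N}$ act by convolution with radial kernels.

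What remains in $\curl(S_{k-N}G\times\Delta_kF)$ is a sum of products of $\Delta_kF$ with first-order derivatives of $S_{k-N}G$, all localized at frequency $\sim 2^k$, so that $\Delta^{-1}\curl$ restores exactly one derivative and gives
\begin{equation*}
\|P(S_{k-N}G\times\Delta_kF)\|_{L^2}\lesssim 2^{-k}\|\nabla S_{k-N}G\|_{L^\infty}\|\Delta_kF\|_{L^2}.
\end{equation*}
The Bernstein-based geometric summation $\|\nabla S_{k-N}G\|_{L^\infty}\lesssim 2^{k(5/2-\eta)}\|G\|_{\dot B^\eta_{2,c_2}}$ converges precisely under $\eta<5/2$, and then Hölder's inequality on the frequency index (with $1/c=1/c_1+1/c_2$) and in time (with $1/a=1/a_1+1/a_2$) close \eqref{paraproduct:1}. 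The two endpoint cases \eqref{paraproduct:2} and \eqref{paraproduct:3} are handled by the same scheme with only the marginal summation replaced: at $s=3/2$, the bound on $\|S_{k-N}F\|_{L^\infty}$ becomes the direct $L^\infty$ hypothesis supplemented by $\|\Delta_kF\|_{L^2}\lesssim 2^{-3k/2}\|F\|_{\dot B^{3/2}_{2,\infty}}$; at $\eta=5/2$, the Bernstein summation for $\|\nabla S_{k-N}G\|_{L^\infty}$ converges only through the $\ell^1$ assumption $G\in\dot B^{5/2}_{2,1}$. The main obstacle of the whole argument is the algebraic identification step, namely recognizing that the obstruction to extending the paraproduct beyond $\eta=3/2$ is concentrated in the single term $S_{k-N}G\times\Delta_k\curl F$, and then verifying that the radial character of the Littlewood--Paley kernels makes \eqref{geometric:condition} directly applicable to the mollified blocks.
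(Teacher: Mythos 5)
Your proposal is correct and follows essentially the same route as the paper: Bony's decomposition with classical estimates for $T_FG$ and $R(F,G)$, and for $T_GF$ the same curl identity (your two chained vector identities compose into the paper's single formula for $\nabla\times(F\times G)$), with the colinearity hypothesis killing the term $S_{k-N}G\times\Delta_k(\nabla\times F)$ so that only products of $\Delta_kF$ with one derivative of the low-frequency part of $G$ survive, after which $(-\Delta)^{-1}\nabla\times$ restores the derivative and the stated ranges of $s$ and $\eta$ follow from the usual Bernstein summations. The only cosmetic difference is that the paper applies the identity to the pairs $\Delta_kF\times\Delta_jG$ rather than to $S_{k-N}G\times\Delta_kF$, which sidesteps the (harmless) remark that $S_{k-N}$'s kernel is not in $\mathcal{S}_0$.
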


\begin{rem}
	The preceding result also holds for vector fields which are independent of time in classical Besov spaces (without any norm in time). Accordingly, the lemma also holds in the case of Besov-space-valued Lebesgue spaces. This means that removing the tildes in \eqref{paraproduct:1}, \eqref{paraproduct:2} and \eqref{paraproduct:3} produces valid estimates.
\end{rem}

\begin{rem}
	Consider an axisymmetric field $H:\mathbb{R}^3\to\mathbb{R}^3$. For any radially symmetric $\varphi\in\mathcal{S}_0$ and any rotation $R$ around the $z$-axis, we find that
	\begin{equation*}
		\varphi *H(Rx)
		=\int_{\mathbb{R}^3}\varphi(R(x-y))H(Ry)dy
		=\int_{\mathbb{R}^3}\varphi(x-y)RH(y)dy=R\varphi *H(x),
	\end{equation*}
	thereby showing that $\varphi* H$ is axisymmetric, too. If, furthermore, the field $H$ has pure swirl, then, employing that
	$S_x(y)\bydef y-2(y\cdot e_\theta(x))e_\theta(x)$, for any $x\neq 0$, is an isometry and satisfies that
	\begin{equation*}
		\frac 12\big(e_\theta(y)+e_\theta(S_x y)\big)=e_\theta(y)-\big(e_\theta(y)\cdot e_r(x)\big)e_r(x)=\big(e_\theta(y)\cdot e_\theta(x)\big)e_\theta (x),
	\end{equation*}
	we compute that
	\begin{equation*}
		\begin{aligned}
			\int_{\mathbb{R}^3}\varphi(x-y)H_\theta(y)e_\theta(y)dy
			&=\frac 12\int_{\mathbb{R}^3}\big(\varphi(x-y)H_\theta(y)e_\theta(y)+\varphi(x-S_xy)H_\theta(S_xy)e_\theta(S_xy)\big)dy
			\\
			&=\frac 12\int_{\mathbb{R}^3}\varphi(x-y)H_\theta(y)\big(e_\theta(y)+e_\theta(S_xy)\big)dy
			\\
			&=\left(\int_{\mathbb{R}^3}\varphi(x-y)H_\theta(y)\big(e_\theta(y)\cdot e_\theta(x)\big)dy\right) e_\theta(x),
		\end{aligned}
	\end{equation*}
	which establishes that $\varphi*H$ has pure swirl, as well.
\end{rem}

\begin{rem}
	The hypothesis \eqref{geometric:condition} is satisfied by axisymmetric divergence-free vector fields such that $F$ has no swirl and $G$ has pure swirl. Indeed, as previously emphasized, the curl of an axisymmetric vector field with no swirl is axisymmetric with pure swirl. Therefore, in this situation, it holds that $\nabla\times F$ and $G$ both have pure swirls, which implies, according to the preceding remark, that their convolutions with radial functions remain axisymmetric with pure swirl and are thus colinear.
\end{rem}

\begin{rem}\label{classical time-space:ES}
	Note  that  Lemma \ref{paradifferential:1} above is an extension of \cite[Lemma 3.4]{ah} to the three-dimensional setting. In particular, its significance  lies in the fact that it allows us to cover the range of parameters  $\eta\in[\frac{3}{2} ,\frac{5}{2})$. Indeed, without the geometric assumption \eqref{geometric:condition} on $F$ and $G$, the paradifferential estimates remain valid but may need to be restricted to parameters satisfying $\eta<\frac 32$.
\end{rem}

\begin{proof}
	We follow the method of proof of Lemma 3.4 from \cite{ah} and write Bony's decomposition
	\begin{equation*}
		F\times G=T_FG-T_GF+R(F,G),
	\end{equation*}
	where the paraproducts are defined by
	\begin{equation*}
		T_FG=\sum_{\substack{j,k\in\mathbb{Z}\\j-k<-2}}\Delta_jF\times\Delta_kG,
		\qquad
		T_GF
		=\sum_{\substack{j,k\in\mathbb{Z}\\j-k<-2}}\Delta_jG\times\Delta_kF
		=-\sum_{\substack{j,k\in\mathbb{Z}\\j-k>2}}\Delta_jF\times\Delta_kG,
	\end{equation*}
	and the remainder is given by
	\begin{equation*}
		R(F,G)=\sum_{\substack{j,k\in\mathbb{Z}\\|j-k|\leq 2}}\Delta_jF\times\Delta_kG,
	\end{equation*}
	to deduce that a direct application of classical paraproduct estimates on homogeneous Besov spaces (see \cite[Appendix A]{ah} or \cite{bcd11}, for instance), combined with the fact that $P$ is bounded over Besov spaces, yields the validity of \eqref{paraproduct:1} for parameters $s\in(-\infty,\frac{3}{2} )$ and $\eta\in (-\infty,\frac{3}{2} )$ with $s+\eta>0$, and the validity of \eqref{paraproduct:2} for parameters $\eta\in (-\frac 32,\frac{3}{2} )$.
	
	It is important to emphasize here that the restriction $\eta<\frac 32$ comes solely from the estimate of $T_GF$. Thus, in order to establish the validity of \eqref{paraproduct:1} and \eqref{paraproduct:2} for the full range of parameters, we only need to show now that
	\begin{equation} \label{paraproduct:7}
		\norm{P(T_GF)}_{\widetilde L^a([0,T);\dot B^{s+\eta-\frac{3}{2} }_{2,c} )}
		\lesssim
		\norm{F}_{\widetilde L^{a_1}([0,T);\dot B^s_{2,c_1} )}
		\norm{G}_{\widetilde L^{a_2}([0,T);\dot B^\eta_{2,c_2} )},
	\end{equation}
	for any $s\in\mathbb{R}$ and $\eta\in (-\infty,\frac{5}{2} )$, as a consequence of the divergence-free structure of $F$ and the geometric assumption \eqref{geometric:condition}.
	
	To that end, assuming first that $F$ and $G$ are smooth, we compute that
	\begin{equation*}
		\nabla\times(F\times G) = \nabla(F\cdot G)-G \times (\nabla \times F)-F\times( \nabla\times G) -2F\cdot \nabla G + F\div G -G\div F.
	\end{equation*}
	Thus, if $\nabla\times F$ and $G$ are colinear and $F$ is divergence free, we arrive at
	\begin{equation}\label{remarkable:paraproduct:0}
		P(F\times G) = (- \Delta)^{-1}\nabla \times \big(
		-F\times( \nabla\times G) -2F\cdot \nabla G + F\div G
		\big).
	\end{equation}
	Now, applying the same reasoning to $\Delta_k F$ and $\Delta_j G$, instead of $F$ and $G$, and utilizing \eqref{geometric:condition} to deduce that $\nabla\times \Delta_k F$ and $\Delta_j G$ are colinear, we obtain that
	\begin{equation}\label{remarkable:paraproduct}
		P(T_GF)
		=(- \Delta)^{-1}\nabla \times
		\Bigg(
		\sum_{\substack{j,k\in\mathbb{Z}\\j-k<-2}}
		\Big(\Delta_k F\times( \nabla\times \Delta_j G) +2\Delta_k F\cdot \nabla \Delta_j G - \Delta_k F\div \Delta_j G\Big)
		\Bigg).
	\end{equation}
	Therefore, applying classical paraproduct estimates (see \cite[Appendix A]{ah} or \cite{bcd11}), we conclude, for any $s\in\mathbb{R}$ and $\eta<\frac 52$, that
	\begin{equation*}
		\begin{aligned}
			&\norm{P(T_GF)}_{\widetilde L^a\dot B^{s+\eta-\frac{3}{2} }_{2,c}}
			\\
			&\qquad\lesssim
			\Bigg\|
			\sum_{\substack{j,k\in\mathbb{Z}\\j-k<-2}}
			\Big(\Delta_k F\times( \nabla\times \Delta_j G) +2\Delta_k F\cdot \nabla \Delta_j G - \Delta_k F\div \Delta_j G\Big)
			\Bigg\|_{\widetilde L^a\dot B^{s+\eta-\frac{5}{2} }_{2,c} }
			\\
			&\qquad\lesssim
			\norm{F}_{\widetilde L^{a_1}\dot B^s_{2,c_1} }
			\norm{\nabla G}_{\widetilde L^{a_2}\dot B^{\eta-\frac 52}_{\infty,c_2} }
			\lesssim
			\norm{F}_{\widetilde L^{a_1}\dot B^s_{2,c_1} }
			\norm{\nabla G}_{\widetilde L^{a_2}\dot B^{\eta-1}_{2,c_2} }
			\\
			&\qquad\lesssim
			\norm{F}_{\widetilde L^{a_1}\dot B^s_{2,c_1} }
			\norm{G}_{\widetilde L^{a_2}\dot B^{\eta}_{2,c_2} },
		\end{aligned}
	\end{equation*}
	which establishes \eqref{paraproduct:7}, thereby completing the proof of \eqref{paraproduct:1} and \eqref{paraproduct:2}.
	
	The justification of \eqref{paraproduct:3} is similar. Indeed, the classical paradifferential estimates apply directly to the paraproduct $T_FG$ and the remainder $R(F,G)$ in the range of parameters desribed in \eqref{paraproduct:3}. Thus, we see that \eqref{paraproduct:3} will follow from the justification of the paraproduct estimate
	\begin{equation} \label{paraproduct:8}
		\norm{P(T_GF)}_{\widetilde L^a([0,T);\dot B^{s+1 }_{2,c} )}
		\lesssim
		\norm{F}_{\widetilde L^{a_1}([0,T);\dot B^s_{2,c} )}
		\norm{G}_{\widetilde L^{a_2}([0,T);\dot B^{\frac 52}_{2,1} )},
	\end{equation}
	for any $s\in\mathbb{R}$. As before, in order to prove \eqref{paraproduct:8}, we apply classical paraproduct estimates to \eqref{remarkable:paraproduct}. This leads to
	\begin{equation*}
		\begin{aligned}
			\norm{P(T_GF)}_{\widetilde L^a\dot B^{s+1 }_{2,c} }\hspace{-20mm}&
			\\
			&\lesssim
			\Bigg\|
			\sum_{\substack{j,k\in\mathbb{Z}\\j-k<-2}}
			\Big(\Delta_k F\times( \nabla\times \Delta_j G) +2\Delta_k F\cdot \nabla \Delta_j G - \Delta_k F\div \Delta_j G\Big)
			\Bigg\|_{\widetilde L^a\dot B^{s }_{2,c} }
			\\
			&\lesssim
			\norm{F}_{\widetilde L^{a_1}\dot B^s_{2,c} }
			\norm{\nabla G}_{\widetilde L^{a_2}L^\infty }
			\lesssim
			\norm{F}_{\widetilde L^{a_1}\dot B^s_{2,c} }
			\norm{\nabla G}_{\widetilde L^{a_2}\dot B^{0}_{\infty,1} }
			\\
			&\lesssim
			\norm{F}_{\widetilde L^{a_1}\dot B^s_{2,c} }
			\norm{G}_{\widetilde L^{a_2}\dot B^{1}_{\infty,1} }
			\lesssim
			\norm{F}_{\widetilde L^{a_1}\dot B^s_{2,c} }
			\norm{G}_{\widetilde L^{a_2}\dot B^{\frac 52}_{2,1} },
		\end{aligned}
	\end{equation*}
	for all $s\in\mathbb{R}$, which establishes the validity of \eqref{paraproduct:3} and concludes of the proof of the lemma.
\end{proof}

\begin{rem}
	If, instead of \eqref{geometric:condition}, one merely assumes that $\nabla\times F$ and $G$ are colinear, in the sense that $G\times(\nabla\times F)=0$, then the paradifferential estimate \eqref{paraproduct:1} remains valid for any parameters $s\in(-\infty,\frac{3}{2} )$ and $\eta\in (-\infty,\frac{5}{2} )$ in the range $s+\eta>1$. This follows from applying Bony's decomposition to each term of \eqref{remarkable:paraproduct:0} and then estimating the resulting paraproducts and remainders as in the proof above. The restriction $s+\eta>1$ is then a consequence of the estimates of the remainders.
	Similarly, under the assumption that $\nabla\times F$ and $G$ are colinear, one can show that \eqref{paraproduct:2} and \eqref{paraproduct:3} remain valid in the respective ranges $\eta\in (-\frac 12, \frac 52)$ and $s\in (-\frac 32,\frac 32)$.
\end{rem}

\section{A priori estimates}\label{a priori ES}

Here, we establish    a priori estimates on smooth solutions of the Navier--Stokes--Maxwell system \eqref{Maxwell:system:*2}. The ensuing bounds will be employed to prove existence of global solutions to that system. For simplicity, from now on, we take $\nu= 1$ in \eqref{Maxwell:system:*2}. However, we emphasize that all estimates below hold for any $\nu >0$.
 
 We  recall first that   the only available global bound for smooth solutions of \eqref{Maxwell:system:*2}   corresponds to the $L^2$-energy estimate
\begin{equation}\label{L^2-Energy}
\norm {u }_{L^\infty_tL^2\cap L^2_t\dot{H}^1} + \norm {(E ,B )}_{L^\infty_tL^2} + \norm {j }_{L^2_tL^2} \lesssim \mathcal{E}_0 \bydef \norm {(u _0,E _0,B _0)}_{L^2} .
\end{equation}
As explained in the introduction, this bound does not seem to be   enough to construct any kind of solutions. Therefore,  we shall aim to propagate some higher regularity for $u$, $E$ and $B$.

In what follows, we recall that are using the notation
$$ \Omega \bydef \frac{\omega_\theta}{r} \qquad \text{and}  \qquad \Gamma \bydef \frac{B_\theta}{r},$$
where
$$ \omega_\theta =\omega \cdot e_\theta= \left( \nabla \times  u \right)\cdot e_\theta \qquad \text{and}  \qquad   B_\theta =B \cdot e_\theta.$$

\subsection{Controlling the velocity field}
In order to control higher regularities for the velocity field, we shall exploit the axisymmetric structure  and perform an energy estimate on the equations describing the evolution of $\omega_\theta$ and $\Omega$, which we derived in Section \ref{axi-subsection}. For convenience, we recall here that the equation for $\omega_\theta$ can be written as
 \begin{equation}\label{omega-equa-form2:B}
  \partial_t \omega_\theta + u\cdot\nabla \omega_{\theta} -\big(\Delta - \frac{1}{r^2} \big)\omega_\theta= \frac{u_r}{r} \omega_\theta -\partial_z \big( \Gamma B_\theta\big)-  \frac{1}{c}\partial_t E _r\frac{B_\theta}{r}  + \frac{1}{c}\partial_t E   \cdot \nabla B_\theta,
  \end{equation} 
  while the equation for $\Omega$ reads as
    \begin{equation}\label{Omega-equa-form2:B}
  \partial_t \Omega  + u\cdot\nabla \Omega -\big(\Delta+ \frac{\partial_r}{r} \big)\Omega =  -\partial_z \big( \Gamma^2\big)  -  \frac{1}{c}\partial_t E \cdot \nabla \Gamma .
  \end{equation}
  The following proposition provides a control on the velocity field in terms of some suitable norms of electromagnetic fields. This will be combined with the estimates from  Section \ref{section.EB:ES} to obtain uniform global bounds in Section \ref{section:closing:ES}, later on.

 \begin{prop}\label{prop.velocity-ES}
 Let $T\in (0,\infty]$ and  $ (u,E,B)$   be a smooth axisymmetric solution of \eqref{Maxwell:system:*2} 
 on $[0,T)$, where $u$ and $E$ have no swirl and $B$ has pure swirl. Then, there is a universal constant $C>0$  such that 
 \begin{equation*}
	 \begin{aligned}
	    &\norm{(\omega,\Omega)}_{L^\infty_tL^2 \cap L^2_t\dot{H}^1 }
	     \\
		 &\qquad\lesssim   \left( \norm{(\omega_0,\Omega_0)}_{L^2}   +  \norm { \frac{1}{c}\partial_t E}_{L^2_t\dot{H}^\frac{1}{2}  }   \norm {B}_{L^\infty_tH^2   }     + \norm {\Gamma}_{L^\infty_t L^3 }\norm {(B,\Gamma)}_{L^2_t \dot{H}^1 } \right)
	     \exp \big(C\mathcal{E}^2_0\big),
	 \end{aligned}
 \end{equation*} 
 where all the norms are taken over the time interval $[0,T)$.
 \end{prop}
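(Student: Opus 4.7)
The plan is to perform simultaneous $L^2$ energy estimates on \eqref{omega-equa-form2:B} and \eqref{Omega-equa-form2:B}. Testing the first against $\omega_\theta$ and the second against $\Omega$, the convective terms vanish by $\div u=0$, and the axisymmetric structure produces the expected dissipations: since $\omega=\omega_\theta e_\theta$ has pure swirl, one has $|\nabla\omega|^2=|\nabla\omega_\theta|^2+|\omega_\theta|^2/r^2$, so
$$-\int_{\mathbb{R}^3}\Big(\Delta-\frac{1}{r^2}\Big)\omega_\theta\cdot\omega_\theta\,dx=\|\nabla\omega\|_{L^2}^2,$$
and a direct integration by parts in $r$ gives $-\int(\Delta+\frac{\partial_r}{r})\Omega\cdot\Omega\,dx\geq\|\nabla\Omega\|_{L^2}^2$, the remainder being a non-negative boundary-type contribution at $r=0$.

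Next, the right-hand sides are handled term by term. For the stretching contribution, I would invoke the interpolation \eqref{ur/r-interpolation} to write
$$\int\frac{u_r}{r}\omega_\theta^2\,dx\lesssim \|\Omega\|_{L^2}^{1/2}\|\nabla\Omega\|_{L^2}^{1/2}\|\omega\|_{L^2}^2\leq \epsilon\|\nabla\Omega\|_{L^2}^2+C\|\omega\|_{L^2}^2\big(\|\omega\|_{L^2}^2+\|\Omega\|_{L^2}^2\big)$$
by Young's inequality. The magnetic quadratic terms $\partial_z(\Gamma B_\theta)$ and $\partial_z(\Gamma^2)$ are treated by transferring the $z$-derivative onto the test function, followed by the Hölder chain $L^3\cdot L^6\cdot L^2$ and the Sobolev embedding $L^6\hookrightarrow\dot H^1$:
$$\Big|\int\partial_z(\Gamma B_\theta)\omega_\theta\,dx\Big|+\Big|\int\partial_z(\Gamma^2)\Omega\,dx\Big|\lesssim \|\Gamma\|_{L^3}\big(\|\nabla B\|_{L^2}+\|\nabla\Gamma\|_{L^2}\big)\big(\|\nabla\omega\|_{L^2}+\|\nabla\Omega\|_{L^2}\big),$$
and the contributions $\|\nabla(\omega,\Omega)\|_{L^2}^2/4$ are absorbed into the dissipation via Young. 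For the electric terms I would use the $\dot H^{1/2}$--$\dot H^{-1/2}$ duality, the estimate $\|fg\|_{\dot H^{-1/2}}\lesssim \|f\|_{L^6}\|g\|_{L^2}$, and Lemma \ref{lemma-f/x:2} (to recast $\Gamma$-norms into $B$-norms), yielding
$$\Big|\int\tfrac{1}{c}\partial_t E_r\,\Gamma\,\omega_\theta\,dx\Big|+\Big|\int\tfrac{1}{c}\partial_t E\cdot\nabla B_\theta\,\omega_\theta\,dx\Big|\lesssim \big\|\tfrac{1}{c}\partial_t E\big\|_{\dot H^{1/2}}\|B\|_{H^2}\|\omega\|_{L^2},$$
and analogously for the $\Omega$ equation, replacing $\omega$ by $\Omega$ and $B$ by $\Gamma$ (so that Lemma \ref{lemma-f/x:2} again converts the estimate to one involving $\|B\|_{H^2}$).

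Assembling these estimates delivers
$$\frac{d}{dt}\big(\|\omega\|_{L^2}^2+\|\Omega\|_{L^2}^2\big)+\|\nabla\omega\|_{L^2}^2+\|\nabla\Omega\|_{L^2}^2\leq C\|\omega\|_{L^2}^2\big(\|\omega\|_{L^2}^2+\|\Omega\|_{L^2}^2\big)+h(t),$$
where $h(t)$ collects the forcing terms quadratically. The baseline energy inequality \eqref{L^2-Energy} furnishes $\int_0^\infty\|\omega\|_{L^2}^2\,d\tau=\int_0^\infty\|\nabla u\|_{L^2}^2\,d\tau\leq \mathcal{E}_0^2$, so Grönwall's lemma produces the factor $\exp(C\mathcal{E}_0^2)$, and integrating $h$ in time followed by a square root yields exactly the stated bound, once the $\|\omega\|_{L^2_tL^2}\leq\mathcal{E}_0$ pickup in the electric terms is absorbed in the exponential prefactor.

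The main obstacle is closing the differential inequality despite the stretching term being formally \emph{superlinear} in $\omega$; the crux is to view one power $\|\omega(t)\|_{L^2}^2$ as an $L^1_t$ weight via the a priori energy bound, turning the estimate into a linear Grönwall inequality. A secondary technical point is the handling of the $\frac{1}{c}\partial_t E$ terms: one must avoid spending derivatives on $B$ that would exceed $H^2$, which is why the $\dot H^{1/2}$--$\dot H^{-1/2}$ duality (rather than an $L^2$ pairing) together with Hardy's inequality from Lemma \ref{lemma-f/x:2} is used to keep the estimate linear in $\|\frac{1}{c}\partial_t E\|_{L^2_t\dot H^{1/2}}$ and in $\|B\|_{L^\infty_t H^2}$, in accordance with the form of the claimed bound.
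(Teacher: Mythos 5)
Your overall route is the same as the paper's: test \eqref{omega-equa-form2:B} against $\omega_\theta$ and \eqref{Omega-equa-form2:B} against $\Omega$, identify the full dissipation $\|\nabla\omega\|_{L^2}^2=\|\nabla\omega_\theta\|_{L^2}^2+\|\omega_\theta/r\|_{L^2}^2$ plus the non-negative boundary term at $r=0$, use \eqref{ur/r-interpolation} for the stretching term, the $L^3\cdot L^6$ H\"older/Sobolev chain for the quadratic magnetic terms, and Gr\"onwall with an $L^1_t$ weight controlled by the energy inequality. Your two deviations are harmless: pairing the $\frac1c\partial_t E$ terms with $\|\omega\|_{L^2}$ via $\dot H^{1/2}$--$\dot H^{-1/2}$ duality and then spending $\|\omega\|_{L^2_tL^2}\leq\mathcal{E}_0$ works just as well as the paper's choice of pairing them with the dissipation, and your Gr\"onwall weight $\|\omega\|_{L^2}^2$ coincides with the paper's $\|u\|_{\dot H^1}^2$ by Biot--Savart.

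There is, however, one step that fails as written: the displayed Young inequality for the stretching term,
\begin{equation*}
\|\Omega\|_{L^2}^{1/2}\|\nabla\Omega\|_{L^2}^{1/2}\|\omega\|_{L^2}^2\leq \epsilon\|\nabla\Omega\|_{L^2}^2+C\|\omega\|_{L^2}^2\big(\|\omega\|_{L^2}^2+\|\Omega\|_{L^2}^2\big),
\end{equation*}
is false. Optimizing the left-hand side over $N=\|\nabla\Omega\|_{L^2}$ shows it would require $\|\Omega\|_{L^2}^{2/3}\|\omega\|_{L^2}^{8/3}\lesssim \|\omega\|_{L^2}^2\big(\|\omega\|_{L^2}^2+\|\Omega\|_{L^2}^2\big)$, which fails as $\|\Omega\|_{L^2}=\|\omega\|_{L^2}=\delta\to 0$ (the left side is $\delta^{10/3}$, the right side $\delta^4$): a cubic expression cannot be dominated by a quadratic-in-$\epsilon$ piece plus a quartic. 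The correct splitting — and the one the paper uses — is
\begin{equation*}
\|\Omega\|_{L^2}^{1/2}\|\nabla\Omega\|_{L^2}^{1/2}\|\omega\|_{L^2}^2\leq \epsilon\big(\|\Omega\|_{L^2}^2+\|\nabla\Omega\|_{L^2}^2\big)+C_\epsilon\|\omega\|_{L^2}^4,
\end{equation*}
where the $\epsilon\|\Omega\|_{L^2}^2=\epsilon\|\omega_\theta/r\|_{L^2}^2$ piece is absorbed into the zeroth-order damping coming from $-\tfrac{1}{r^2}\omega_\theta$ (which you already identified in your dissipation identity but then did not use), the $\epsilon\|\nabla\Omega\|_{L^2}^2$ piece is absorbed by the $\Omega$-dissipation, and $\|\omega\|_{L^2}^4=\|\omega\|_{L^2}^2\cdot\|\omega\|_{L^2}^2$ feeds into Gr\"onwall exactly as in your final step. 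With that one correction the rest of your argument goes through and yields the stated bound.
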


 \begin{rem}Note that the right-hand side in the bound above does not exhibit any time growth, provided that the norms of the electromagnetic fields remain bounded on any time interval $[0,T)$. This is crucial and necessary to prove the global existence of solutions in Theorem \ref{Thm:1}.
 \end{rem}

 \begin{proof}
 Firstly, we observe that a straightforward computation relying on the fact $\omega$ is axisymmetric without swirl yields that
 \begin{equation*}
 	e_r\cdot\nabla\omega=(\partial_r\omega_\theta)e_\theta,
 	\qquad
 	e_\theta\cdot\nabla\omega=-\frac{\omega_\theta}{r}e_r=-\Omega e_r,
 	\qquad
 	e_z\cdot\nabla\omega=(\partial_z\omega_\theta)e_\theta.
 \end{equation*}
In particular, this implies that
 \begin{equation}\label{ID:00}
 	|\omega| = |\omega_\theta|
	\qquad\text{and}\qquad
	|\nabla \omega| \sim \left|(\partial_r \omega_\theta,\partial_z \omega_\theta)\right| + \left| \Omega\right|,
 \end{equation}
which will allow us to estimate $(\omega_\theta,\Omega)$ instead of $(\omega,\Omega)$.

 Now, multiplying \eqref{omega-equa-form2:B} by $\omega_\theta$, integrating in time and space, and then using  the divergence-free condition on $E$ and $u$ yields, for all $t\in [0,T)$, that
 \begin{equation}\label{EQ0}
 \begin{aligned}
 \frac{1}{2} \norm{\omega_\theta(t)}_{  L^2 }^2 & +  \norm{\nabla \omega_\theta}_{L^2_t L^2 }^2 +\norm{\frac{\omega_\theta}{r}}_{L^2_t L^2 }^2
 \\
 &\leq   \frac{1}{2} \norm{\omega_0}_{L^2}^2  + \int_{0}^t\norm{\frac{u_r(\tau)}{r}}_{L^\infty} \norm {\omega_\theta(\tau)}_{L^2}^2d\tau
 +\left| \int_{0}^t \int_{\mathbb{R}^3} \Gamma(\tau) B(\tau) \partial_z \omega_\theta(\tau)  dxd\tau \right|
\\
&\quad+\int_{0}^t \frac{1}{c}\norm {(\partial_t E)B(\tau) }_{L^2} \Big( \norm {\nabla \omega_\theta(\tau)}_{L^2} +  \norm {\frac{\omega_\theta}{r}(\tau)}_{L^2}\Big) d\tau,  
 \end{aligned}
\end{equation}
where the norms in time are taken over the interval $[0,t)$.

In order to estimate the second term in the right-hand side above, we make use of \eqref{ur/r-interpolation} to obtain, for any $\varepsilon>0$, that
\begin{equation}\label{EQ1}
	\begin{aligned}
		 \int_{0}^t\norm{\frac{u_r(\tau)}{r}}_{L^\infty}
		 &\norm {\omega_\theta(\tau)}_{L^2}^2d\tau
		 \\
		 & \lesssim    \int_{0}^t\norm {\Omega(\tau)}_{L^2}^\frac{1}{2}  \norm {\nabla \Omega(\tau)}_{L^2}^\frac{1}{2}\norm {\omega_\theta(\tau)}_{L^2}^2d\tau
		 \\
         & \leq    \varepsilon  \int_{0}^t\norm {\Omega(\tau)}_{L^2}  \norm {\nabla \Omega(\tau)}_{L^2} d\tau+  C_\varepsilon\int_{0}^t\norm {\omega_\theta(\tau)}_{L^2}^4d\tau\\
         & \leq   \varepsilon \Big(\norm {\frac{\omega_{\theta}}{r}}_{L^2_t L^2 }^2  +\norm { \Omega}_{L^2_t \dot{H}^1 } ^2\Big)+  C_\varepsilon\int_{0}^t\norm {u(\tau)}_{\dot{H}^1}^2\norm {\omega_\theta(\tau)}_{L^2}^2d\tau,  
	\end{aligned}
\end{equation} 
where we utilized the celebrated Biot--Savart estimate in the last line.

On the other hand, the estimate of the third term in the right-hand side of \eqref{EQ0} is obtained by employing   H\"older's inequality, again, followed by the Sobolev embedding $\dot{H}^1\hookrightarrow L^6(\mathbb{R}^3)$ to write, for any $\varepsilon>0$, that 
 \begin{equation} \label{EQ2}
 \begin{aligned}
\left| \int_{0}^t \int_{\mathbb{R}^3} \Gamma(\tau) B(\tau) \partial_z \omega_\theta(\tau)  dxd\tau \right|  & \leq    C_\varepsilon \int_{0}^t \norm {\Gamma B(\tau)}_{L^2}^2 d\tau +\varepsilon \norm {\partial_z \omega_\theta}_{L^2_t L^2 }^2   \\
 & \leq   C_\varepsilon \int_{0}^t \norm{ \Gamma(\tau)}_{L^3}^2\norm {B(\tau)}_{L^6}^2 d\tau + \varepsilon \norm {\nabla \omega_\theta}_{L^2_t L^2 }^2 \\
 & \leq   C_\varepsilon  \norm{ \Gamma }_{L^\infty_t L^3 }^2 \norm {B}_{L^2_t \dot{H}^1 }^2 + \varepsilon \norm {\nabla \omega_\theta}_{L^2_t L^2 }^2 . 
 \end{aligned}
\end{equation} 
Finally, the last integral in \eqref{EQ0} can be easily controlled by   similar arguments to obtain that
\begin{equation}
\begin{aligned} \label{EQ3}
\int_{0}^t \frac{1}{c}\norm {(\partial_t E)B(\tau) }_{L^2}
&\Big( \norm {\nabla \omega_\theta(\tau)}_{L^2} +  \norm {\frac{\omega_\theta}{r}(\tau)}_{L^2}\Big) d\tau
\\
&\leq     \frac{C_\varepsilon}{c^2}\norm {\partial_t E }_{L^2_t L^3 }^2 \norm B_{L^\infty_t L^6 }^2
+ \varepsilon\Big( \norm {\nabla \omega_\theta}_{L^2_t L^2 }^2 + \norm {\frac{\omega_\theta}{r}}_{L^2_t L^ 2}^2 \Big)
\\
&\leq     \frac{C_\varepsilon}{c^2}\norm {\partial_t E }_{L^2_t \dot{H}^\frac{1}{2} }^2 \norm B_{L^\infty_t \dot{H}^1 }^2
+ \varepsilon\Big( \norm {\nabla \omega_\theta}_{L^2_t L^2 }^2 + \norm {\frac{\omega_\theta}{r}}_{L^2_t L^ 2}^2 \Big).
\end{aligned}
\end{equation}

All in all, incorporating \eqref{EQ1}, \eqref{EQ2} and \eqref{EQ3} into \eqref{EQ0}, and choosing $\varepsilon$ small enough yields that
\begin{equation}  \label{EQ4}
\begin{aligned}
\norm{\omega(t)}_{ L^2 }^2
+ \norm{\nabla \omega_\theta}_{L^2_t L^2 }^2
&+\norm{\frac{\omega_\theta}{r}}_{L^2_t L^2 }^2
\\
&\leq   \norm{\omega_0}_{L^2}^2   + \frac{1}{4}   \norm { \Omega}_{L^2_t \dot{H}^1 } ^2+  C\int_{0}^t\norm {u(\tau)}_{\dot{H}^1}^2\norm {\omega (\tau)}_{L^2}^2d\tau    \\  
  &\quad   +  C  \norm{ \Gamma }_{L^\infty_t L^3 }^2 \norm {B}_{L^2_t \dot{H}^1 }^2  + \frac{C }{c^2}\norm {\partial_t E }_{L^2_t \dot{H}^\frac{1}{2} }^2 \norm B_{L^\infty_t \dot{H}^1 }^2 ,
\end{aligned} 
\end{equation} 
where we have utilized \eqref{ID:00}.

 Now, we show how to estimate  $\Omega$ in the right-hand side above. To that end, we first perform an $L^2$-energy estimate on \eqref{Omega-equa-form2:B} followed by a standard application of paraproduct laws to find, for any $t\in [0,T)$, that 
   \begin{eqnarray*}
\begin{aligned}
	\frac{1}{2}\norm{\Omega(t)}_{ L^2 }^2 + \norm{\Omega}_{ L^2_t \dot{H}^1 } ^2  &\leq \frac{1}{2}\norm{\Omega_0}_{L^2}^2     +    \norm{ \Gamma ^2  }_{L^2_tL^2} \norm {\Omega}_{L^2_t \dot{H}^1}  + \frac{1}{c}\norm {\partial_t E  \cdot \nabla \Gamma} _{L^2_t \dot H^{-1}} \norm {\Omega}_{L^2_t \dot H^1}\\
	    &\leq \frac{1}{2}\norm{\Omega_0}_{L^2}^2     +     C \norm{ \Gamma }_{L^4_t L^4 }^4  + \frac{C }{c^2}\norm {\partial_t E }_{L^2_t \dot{H}^ \frac{1}{2} }^2 \norm \Gamma _{L^\infty_t \dot{H}^1  }^2  + \frac{1}{2} \norm{\Omega}_{L^2_t \dot H^1}^2 ,
\end{aligned}
 \end{eqnarray*}
for some universal constant $C>0$.
Therefore, by further employing H\"older's and embedding inequalities, we obtain that 
 \begin{eqnarray*}
\begin{aligned}
	 \norm{\Omega(t)}_{ L^2 }^2 + \norm{\Omega}_{ L^2_t \dot{H}^1 } ^2   
	    &\leq \norm{\Omega_0}_{L^2}^2     +   C \norm{ \Gamma }_{L^\infty_t L^3 }^2 \norm {\Gamma}_{L^2_t \dot{H}^1 }^2  + \frac{C }{c^2}\norm {\partial_t E }_{L^2_t \dot{H}^ \frac{1}{2} }^2 \norm \Gamma _{L^\infty_t \dot{H}^1  }^2 .
\end{aligned}
 \end{eqnarray*}
 Hence, recalling that 
 $$\Gamma=\frac{B_\theta}{r},$$
 and   employing Lemma \ref{lemma-f/x:2}, we arrive at the bound  
 \begin{eqnarray}\label{EQ5}
\norm{\Omega(t)}_{ L^2 }^2 + \norm{\Omega}_{ L^2_t \dot{H}^1 } ^2 \leq    \norm{\Omega_0}_{L^2}^2     +   C \norm{ \Gamma }_{L^\infty_t L^3 }^2 \norm {\Gamma}_{L^2_t \dot{H}^1 }^2  +  \frac{C }{c^2}\norm {\partial_t E }_{L^2_t \dot{H}^ \frac{1}{2} }^2 \norm B _{L^\infty_t \dot{H}^2 }^2 .
 \end{eqnarray}
 At last,  summing \eqref{EQ4} and \eqref{EQ5}, and utilizing \eqref{ID:00}, we arrive at the estimate
\begin{equation*}
\begin{aligned}
\norm{\omega(t)}_{ L^2 }^2
+ \norm{ \omega}_{L^2_t \dot{H}^1 }^2
&+\norm{\Omega(t)}_{ L^2 }^2
+ \norm{\Omega}_{ L^2_t \dot{H}^1 } ^2
\\
&\lesssim   \norm{\omega_0}_{L^2}^2+\norm{\Omega_0}_{L^2}^2
+ \int_{0}^t\norm {u(\tau)}_{\dot{H}^1}^2\norm {\omega (\tau)}_{L^2}^2d\tau
\\
&\quad +
\left(\norm { \frac{1}{c}\partial_t E}_{L^2_t\dot{H}^\frac{1}{2}  }   \norm {B}_{L^\infty_tH^2   }     + \norm {\Gamma}_{L^\infty_t L^3 }\norm {(B,\Gamma)}_{L^2_t \dot{H}^1 } \right)^2.
\end{aligned} 
\end{equation*}

Finally, an application of the classical Gr\"onwall inequality yields that
 \begin{align*}
\norm{(\omega,\Omega)}_{L^\infty_tL^2 \cap L^2_t\dot{H}^1 }
 &\lesssim
  \left(\norm{(\omega_0,\Omega_0)}_{L^2}+\norm { \frac{1}{c}\partial_t E}_{L^2_t\dot{H}^\frac{1}{2}  }   \norm {B}_{L^\infty_tH^2   }     + \norm {\Gamma}_{L^\infty_t L^3 }\norm {(B,\Gamma)}_{L^2_t \dot{H}^1 } \right)
  \\
  &\quad\times\exp\left(C\int_{0}^t\norm {u(\tau)}_{\dot{H}^1}^2 d\tau\right),
 \end{align*}
which, in view of the energy inequality \eqref{L^2-Energy}, concludes the proof of the proposition.
\end{proof}

In view of    the   estimates on the velocity field given in the preceding proposition, we now need to establish the following bounds on the electromagnetic   field:
\begin{itemize}
\item An asymptotically  vanishing bound for $\frac{1}{c} \partial_t E$ of the form
\begin{equation}\label{Claim:01}
	\frac{1}{c}\norm {\partial_t E}_{L^2_t \dot{H}^\frac{1}{2}  } \lesssim c^{-\alpha} F\Big(\norm{(u,E,B)}_X\Big), 
\end{equation}  
for some $\alpha>0$, a (nonlinear) function $F$ and a suitable functional space $X$.
\item An asymptotically global bound for $ B$ in $L^2_t\dot{H}^1$ and $\Gamma$ in $L^\infty_t L^3  \cap L^2_t \dot{H}^1 $   of the form 
\begin{equation}\label{Claim:02}
	\norm B_{ L^2_t \dot{H}^1 } +  \norm { \Gamma}_{L^\infty_t L^3  \cap L^2_t \dot{H}^1  } \leq C_0 + c^{-\beta} F\Big(\norm{(u,E,B)}_X\Big),
\end{equation}  
for some $\beta>0$ and  $C_0>0$ depending  only  on the initial data.
\end{itemize}
 The complete justification of these bounds will be the subject of   Section \ref{section:asymptotics}, later on.

\subsection{Controlling the electromagnetic field}\label{section.EB:ES}

Here, we establish several estimates combining  the refined study of the dispersive properties of Maxwell's equations from \cite{ah} with the axisymmetric structure. The principal results of this part of the article are obtained in Sections \ref{subsubsection:HF} and \ref{subsubsection:LF}, below.

  The ensuing bounds will be further combined with the results from the previous section to arrive at a global control of solutions to \eqref{Maxwell:system:*2} in Section \ref{section:closing:ES}, later on. 

For the sake of clarity, we recall first the essential  results from  \cite{ah} on Strichartz estimates and maximal parabolic regularity for the three-dimensional damped Maxwell system which are useful in the present work.
  
\begin{lem}\cite[Corollary 2.12]{ah}\label{cor:maxwell}
	Consider a solution $(E,B) :[0,T)\times\mathbb{R}^3\to\mathbb{R}^6$ of the damped Maxwell system
	\begin{equation}\label{damped:Maxwell:system}
		\begin{cases}
			\begin{aligned}
				\frac{1}{c} \partial_t E - \nabla \times B + \sigma c E & = G,
				\\
				\frac{1}{c} \partial_t B + \nabla \times E & = 0,
				\\
				\div B & =0,
			\end{aligned}
		\end{cases}
	\end{equation}
	for some initial data $(E,B)(0,x)=(E_0,B_0)(x)$, where $\sigma> 0$ and $c>0$.
	
	For any exponent pairs $(q,r),(\tilde q,\tilde r)\in [1,\infty]\times[2,\infty)$ which are admissible in the sense that
	\begin{equation*}
		\frac 1q+\frac {1} r\geq \frac {1} 2
		\qquad\text{and}\qquad
		\frac 1{\tilde q}+\frac {1} {\tilde r}\geq \frac {1} 2,
	\end{equation*}
	and such that
	\begin{equation*}
		\frac 1q+\frac 1{\tilde q}\leq 1,
	\end{equation*}
	one has the high-frequency estimate
	\begin{equation*}
		\begin{aligned}
			2^{-j\left(1-\frac 2r\right)}&\norm{\Delta_j (PE,B)}_{L^q([0,T);  L^r)  }
			\\
			&\lesssim
			c^{\frac 12-\frac 1r-\frac 2q}
			\norm{\Delta_j (PE_0,B_0)}_{L^2 }
			+c^{2-\frac 1r-\frac 1{\tilde r}-\frac 2q-\frac 2{\tilde q}}
			2^{j\left(1-\frac 2{\tilde r}\right)}\norm{\Delta_j PG}_{L ^{\tilde q'}([0,T);   L^{\tilde r'}  )},
		\end{aligned}
	\end{equation*}
	for all $j\in\mathbb{Z}$ with $2^j\geq \sigma c$, and the low-frequency estimates
	\begin{equation*}
		\begin{aligned}
			2^{-j\left(\frac 32-\frac 3r\right)}&\norm{\Delta_j PE}_{L^q([0,T);  L^r ) }
			\\
			&\lesssim
			c^{-\frac 2q}\norm{\Delta_j PE_0}_{L^2 }
			+c^{-1}
			2^{j\left(1-\frac 2q\right)}\norm{\Delta_j B_0}_{L^2 }
			+c^{1-\frac 2q-\frac 2{\tilde q}}
			2^{j\left(\frac 32-\frac 3{\tilde r}\right)}\norm{\Delta_j PG}_{L ^{\tilde q'} ([0,T);  L^{\tilde r'})  }
		\end{aligned}
	\end{equation*}
	and
	\begin{equation*}
		\begin{aligned}
			2^{-j\left(\frac 32-\frac 3r-\frac 2q\right)}&\norm{\Delta_j B}_{L^q([0,T);   L^r)  }
			\\
			&\lesssim
			c^{-1}2^{j}
			\norm{\Delta_j PE_0}_{L^2 }
			+\norm{\Delta_j B_0}_{L^2 }
			+
			2^{j\left(\frac 52-\frac 3{\tilde r}-\frac 2{\tilde q}\right)}\norm{\Delta_j PG}_{L ^{\tilde q'}([0,T);   L^{\tilde r'}  )},
		\end{aligned}
	\end{equation*}
	for all $j\in\mathbb{Z}$ with $2^j\leq \sigma c$.
\end{lem}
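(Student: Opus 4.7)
The plan is to reduce the damped Maxwell system to a second-order telegraph equation whose dispersion relation exhibits a sharp transition at the threshold $|\xi|\sim \sigma c$, and then to apply the Keel--Tao $TT^\ast$ machinery in each frequency regime.

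First I would apply Leray's projector $P$ to Ampère's equation, noting that the curl-free part of $E$ satisfies the pure ODE $\frac{1}{c}\partial_t (I-P)E + \sigma c(I-P)E = (I-P)G$ and carries no dispersion. For the solenoidal fields $PE$ and $B$, differentiating Ampère's equation in time, using Faraday to eliminate $\partial_t B$, and exploiting the identity $\nabla\times\nabla\times PE = -\Delta PE$ (since $\div PE=0$), one arrives at the telegraph equation
\begin{equation*}
\tfrac{1}{c^2}\partial_{tt} (PE,B) - \Delta (PE,B) + \sigma \partial_t (PE,B) = \text{source},
\end{equation*}
with source terms expressed in terms of $G$ and its time derivative. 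The associated dispersion relation $\tau^2 - ic^2\sigma\tau - c^2|\xi|^2 = 0$ has roots
\begin{equation*}
\tau_\pm(\xi) = \tfrac{ic^2\sigma}{2}\pm c\sqrt{|\xi|^2 - \tfrac{c^2\sigma^2}{4}},
\end{equation*}
which change character precisely at $|\xi|=\sigma c/2$, matching the dichotomy $2^j \lessgtr \sigma c$ in the statement.

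In the high-frequency regime $2^j \gtrsim \sigma c$, the square root is real and the propagator behaves as a wave of speed $c$ modulated by the exponential damping factor $e^{-c^2\sigma t/2}$. Each Littlewood--Paley piece $\Delta_j$ then satisfies the classical three-dimensional wave dispersive estimate with decay $|ct|^{-1}$, dressed by the damping factor, which interpolated with the $L^2$ energy bound and fed into the Keel--Tao $TT^\ast$ scheme yields the stated Strichartz bound; the scaling factor $c^{\frac{1}{2}-\frac{1}{r}-\frac{2}{q}}$ emerges from bookkeeping the wave-cone speed and the damping rate, and the non-sharp admissibility $\frac{1}{q}+\frac{1}{r}\geq\frac{1}{2}$ is permitted because the damping factor $e^{-c^2\sigma t/2}$ provides the missing time integrability outside the standard sharp wave-admissibility line. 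In the low-frequency regime $2^j\lesssim\sigma c$, the discriminant is negative and both roots are purely imaginary, yielding a slow parabolic mode of effective diffusivity $1/\sigma$ and a fast exponentially decaying mode of rate $\sim c^2\sigma$. Heat-kernel $L^p$ decay estimates applied to each mode, combined with the $2\times 2$ diagonalization coupling $PE$ and $B$ via $\nabla\times$, produce the two low-frequency bounds; the weights $c^{-1}$ on $B_0$ in the $PE$ estimate and $c^{-1}2^j$ on $PE_0$ in the $B$ estimate reflect the off-diagonal entries of the diagonalizing matrix. Duhamel's formula combined with $TT^\ast$ duality then gives the inhomogeneous bounds with the combined scalings displayed in the statement.

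The main obstacle is the sharp tracking of the $c$-dependence through the transition zone $2^j\sim \sigma c$: the damped-wave dispersive estimate and the parabolic heat-kernel estimate must be matched without any logarithmic loss at the interface, which requires a uniform-in-$c$ stationary phase analysis of the symbol and a carefully chosen cutoff near $|\xi|=\sigma c/2$. A further subtlety is the diagonalization of the $2\times 2$ coupled system for $(PE,B)$: the eigenvectors degenerate as one approaches the threshold and must be renormalized so that the separate contributions of $E_0$ and $B_0$ to each field are captured with the correct $c$-weights. Once these two points are handled, the remainder of the argument consists of now-standard Fourier-analytic steps: symbol localization, stationary phase for the fundamental solution, $TT^\ast$ duality, and Littlewood--Paley reassembly.
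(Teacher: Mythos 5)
First, note that the paper does not prove this lemma at all: it is quoted verbatim from \cite[Corollary 2.12]{ah} and used as a black box, so the only "proof" in the paper is the citation. Your outline does follow the same route as the proof in that reference — explicit Fourier analysis of the damped Maxwell propagator, the dichotomy at $|\xi|\sim\sigma c$ dictated by the discriminant of $\tau^2+i\sigma c^2\tau-c^2|\xi|^2$, a damped-wave dispersive estimate fed into $TT^\ast$ at high frequencies, and a splitting into a slow parabolic mode of diffusivity $1/\sigma$ and a fast mode decaying like $e^{-\sigma c^2 t}$ at low frequencies, with the off-diagonal $c^{-1}$ and $c^{-1}2^j$ weights coming from the quasi-static balance $E\approx\frac{1}{\sigma c}\nabla\times B$. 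So the strategy is the right one.

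There is, however, one concrete gap in the reduction as you set it up. Eliminating $B$ to get the telegraph equation for $PE$ produces the source $\frac{1}{c}\partial_t PG$ (and, for $B$, the source $-\nabla\times G$), whereas the estimates to be proved control the solution by $\|\Delta_j PG\|_{L^{\tilde q'}L^{\tilde r'}}$ only — no time derivative and no extra spatial derivative. If you run Duhamel on the second-order equation with the source $\frac 1c\partial_t PG$, the inhomogeneous bound you obtain is not the stated one. The fix is to keep the first-order $2\times 2$ system for $(PE,B)$ in Fourier variables, whose semigroup has eigenvalues exactly your $\tau_\pm$, and apply Duhamel there; the second-order equation should only be used to identify the modes, not to generate the Duhamel term. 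Relatedly, the two points you defer — the uniform-in-$c$ stationary phase through the degenerate phase $\sqrt{|\xi|^2-\sigma^2c^2/4}$ on the blocks with $2^j\sim\sigma c$, and the coalescence of the eigenvalues (the matrix is a Jordan block at threshold, not merely "degenerate eigenvectors") — are precisely where the work lies; the standard device is to write the propagator with entire functions of $t^2(|\xi|^2-\sigma^2c^2/4)$ of $\cos$/$\operatorname{sinc}$ type rather than diagonalizing, which removes the singularity of the eigenvector basis. Note also that no matching of the two regimes on a single block is required, since the dichotomy $2^j\gtrless\sigma c$ is block-wise; the delicate point is only the uniform dispersive estimate within the near-threshold blocks. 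As written, your argument is a correct plan with the Duhamel step for $PE$ set up in a way that would not yield the stated inhomogeneous bound.
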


\begin{lem}\cite[Corollary 2.14]{ah} \label{cor:parabolic:maxwell}
	Consider a solution $(E,B) :[0,T)\times\mathbb{R}^3\to\mathbb{R}^6$ of the damped Maxwell system \eqref{damped:Maxwell:system}, for some initial data $(E,B)(0,x)=(E_0,B_0)(x)$, where $\sigma> 0$ and $c>0$.

	For any $\chi\in C^\infty_c(\mathbb{R}^d)$ and $s\in\mathbb{R}$, one has the low-frequency estimates
	\begin{equation*}
		\begin{aligned}
			\norm{\chi(c^{-1}D)PE}_{L^m_t([0,T);\dot B^{s+\frac 2m}_{2,q})}
			&\lesssim
			c^{-\frac 2m}\norm{PE_0}_{\dot B^{s+\frac 2m}_{2,q}}+c^{-1}\norm{B_0}_{\dot B^{s+1}_{2,m}}
			\\
			&\quad+c^{-1+\frac 2r-\frac 2m}\norm{PG}_{L_t^r([0,T);\dot B_{2,q}^{s+\frac 2m})},
		\end{aligned}
	\end{equation*}
	for any $1<r\leq m<\infty$ and $1\leq q\leq \infty$, as well as
	\begin{equation*}
		\norm{\chi(c^{-1}D)B}_{L^m_t([0,T);\dot B^{s+\frac 2m}_{2,1})}
		\lesssim
		c^{-1}\norm{PE_0}_{\dot B^{s+1}_{2,m}}+\norm{B_0}_{\dot B^{s}_{2,m}}
		+\norm{PG}_{L_t^r([0,T);\dot B_{2,\infty}^{s-1+\frac 2r})},
	\end{equation*}
	for any $1<r<m<\infty$, and
	\begin{equation*}
		\norm{\chi(c^{-1}D)B}_{L^m_t([0,T);\dot B^{s+\frac 2m}_{2,q})}
		\lesssim
		c^{-1}\norm{PE_0}_{\dot B^{s+1}_{2,m}}+\norm{B_0}_{\dot B^{s}_{2,m}}
		+\norm{PG}_{L_t^m([0,T);\dot B_{2,q}^{s-1+\frac 2m})},
	\end{equation*}
	for any $1<m<\infty$ and $1\leq q\leq\infty$.
\end{lem}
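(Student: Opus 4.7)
The plan is to reduce the damped Maxwell system to a scalar damped wave equation, analyze its symbol in the low-frequency regime $|\xi|\lesssim c$ captured by $\chi(c^{-1}D)$, and transfer classical maximal parabolic regularity to both $B$ and $PE$. Applying the Leray projector and eliminating $PE$ through $\nabla\times PE=-\tfrac{1}{c}\partial_{t}B$, the solenoidal magnetic field satisfies
\begin{equation*}
\frac{1}{c^{2}}\partial_{tt}B+\sigma\partial_{t}B-\Delta B=-\nabla\times PG,
\end{equation*}
with initial velocity $\partial_{t}B|_{t=0}=-c\,\nabla\times E_{0}$ prescribed by Faraday's equation. At spatial frequency $\xi$, the characteristic polynomial $\tau^{2}+i\sigma c^{2}\tau-c^{2}|\xi|^{2}=0$ has two roots whose expansion in the regime $|\xi|\ll\sigma c$ reads $\tau_{+}=-i|\xi|^{2}/\sigma+O(|\xi|^{4}/(\sigma^{3}c^{2}))$ and $\tau_{-}=-i\sigma c^{2}+i|\xi|^{2}/\sigma+O(|\xi|^{4}/(\sigma^{3}c^{2}))$. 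Consequently the propagator splits, in the region cut off by $\chi(c^{-1}\xi)$, into a heat-like mode of rate $|\xi|^{2}/\sigma$ and a strongly damped mode of rate $\sigma c^{2}$.

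The $B$-estimates should then follow from classical maximal $L^{p}$ regularity applied to the heat-like semigroup $e^{-t|D|^{2}/\sigma}$ governing the dominant mode. Working dyadic block by dyadic block on $\Delta_{j}B$ and reassembling by $\ell^{q}$-summation over $j$, one obtains the advertised parabolic gain of $2/m$ derivatives. The distinction between $r<m$ and $r=m$ that separates the two $B$-estimates reflects the non-endpoint versus endpoint case of Young's convolution inequality for the heat kernel in time, which in turn forces the third Besov index $\infty$ on the forcing in the former and preserves a generic index $q$ in the latter. The strongly damped mode contributes only a remainder of order $c^{-1}$ which, combined with the initial velocity $-c\,\nabla\times E_{0}$ inherited from Faraday, accounts for the boundary term $c^{-1}\norm{PE_{0}}_{\dot B^{s+1}_{2,m}}$.

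For the estimate on $PE$ I would avoid the second-order reduction and use directly the first Maxwell equation
\begin{equation*}
\partial_{t}PE+\sigma c^{2}PE=c\,\nabla\times B+cPG,
\end{equation*}
which is a pure first-order relaxation at rate $\sigma c^{2}$ whose Duhamel representation convolves the right-hand side with the scalar kernel $e^{-\sigma c^{2}t}$. Young's inequality in time then produces the factor $c^{-2/m}$ on the homogeneous part and $c^{-1+2/r-2/m}$ on the source $cPG$, while the coupling term $c\,\nabla\times B$ is controlled by feeding in the $B$-bound established just above; the one-derivative loss in $\nabla\times B$ combined with the $c^{-1}$ gain of the relaxation kernel yields precisely the missing term $c^{-1}\norm{B_{0}}_{\dot B^{s+1}_{2,m}}$.

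The main obstacle I anticipate is the careful bookkeeping of the factors of $c$ and $\sigma$ appearing in the multiplier expansions above, together with the preservation of the third Besov index $q$ while simultaneously controlling the time-integrability exponent $m$. This seems to force one to work frequency-by-frequency on dyadic shells, extract explicit Fourier multiplier bounds from the spectral splitting (for instance, by expressing the propagator as $\chi(c^{-1}\xi)(P_{+}e^{-t|\xi|^{2}/\sigma}+P_{-}e^{-\sigma c^{2}t}(1+O(|\xi|^{2}/(\sigma c)^{2})))$) and only then reassemble via $\ell^{q}$-summation, rather than rely on an abstract semigroup functional calculus that would a priori lose the sharp dependence on $c$.
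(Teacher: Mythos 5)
This lemma is not proved in the paper at all: it is quoted verbatim from \cite[Corollary 2.14]{ah}, so there is no in-paper argument to compare your proposal against. That said, your outline reproduces the mechanism that underlies the cited result. The reduction of the solenoidal part of the system to the damped wave equation $c^{-2}\partial_{tt}B+\sigma\partial_tB-\Delta B=-\nabla\times PG$ with $\partial_tB|_{t=0}=-c\,\nabla\times E_0$, the splitting of its symbol for $|\xi|\ll\sigma c$ into a heat-like root $\sim -|\xi|^2/\sigma$ and an overdamped root $\sim-\sigma c^2$, maximal parabolic regularity for the heat-like mode (with the $r<m$ versus $r=m$ dichotomy governing whether the forcing index is $\infty$ or $q$), and the first-order relaxation equation $\partial_tPE+\sigma c^2PE=c\,\nabla\times B+cPG$ for the electric field are exactly the right ingredients; the powers of $c$ you predict all check out, e.g.\ $c\,\norm{e^{-\sigma c^2t}}_{L^a_t}\sim c^{1-2/a}$ yields $c^{-2/m}$ on $PE_0$, $c^{-1+2/r-2/m}$ on $cPG$, and, through the coupling term, $c^{-1}\norm{B_0}_{\dot B^{s+1}_{2,m}}$.

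Two points need more care than the sketch gives them, though you anticipate both. First, feeding the $B$-estimate into the Duhamel term $c\int_0^te^{-\sigma c^2(t-\tau)}\nabla\times B\,d\tau$ produces the forcing in the form $c^{-1}\norm{PG}_{L^r_t\dot B^{s+2/r}_{2,\infty}}$ together with a stray $c^{-2}\norm{PE_0}_{\dot B^{s+2}_{2,m}}$; recovering the stated $c^{-1+2/r-2/m}\norm{PG}_{L^r_t\dot B^{s+2/m}_{2,q}}$ and $c^{-2/m}\norm{PE_0}_{\dot B^{s+2/m}_{2,q}}$ requires trading the excess derivatives for powers of $c$ by exploiting the frequency localization $|\xi|\lesssim\sigma c$ enforced by $\chi(c^{-1}D)$ --- legitimate, but it must be carried out explicitly. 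Second, reassembling blockwise bounds by $\ell^q$-summation a priori yields Chemin--Lerner norms $\widetilde L^m_t\dot B^{s+2/m}_{2,q}$ rather than $L^m_t\dot B^{s+2/m}_{2,q}$, and the inclusion between the two goes the right way only when $q\leq m$; for general $q$ the Minkowski step has to be placed correctly or the maximal-regularity argument run directly in $L^m_t\dot B_{2,q}$. With those caveats, I would call your proposal a correct outline of the proof rather than a complete one.
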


Let us now be more precise about the source term $G$ which will be used in the application of the preceding two lemmas. Specifically, we will consider the Maxwell system 
\begin{equation}\label{Maxwell:system:*}
		\begin{cases}
			\begin{aligned}
				\frac{1}{c} \partial_t E - \nabla \times B + \sigma c E & =- \sigma P( u \times B), & \div E&=0,
				\\
				\frac{1}{c} \partial_t B + \nabla \times E & = 0, & \div B&=0,
				\\
				\div u & =0 .
			\end{aligned}
		\end{cases}
	\end{equation}
	
	Furthermore, in order to exploit the dichotomy between high and low frequencies featured in the estimates from the above lemmas, we consider the variants of Besov semi-norms
\begin{equation*}
	\left\|f\right\|_{\dot B^{s}_{p,q,<}}\bydef
	\left(
	\sum_{\substack{k\in\mathbb{Z}\\ 2^k< \sigma c}} 2^{ksq}
	\left\|\Delta_{k}f\right\|_{L^p}^q\right)^\frac{1}{q}
	\quad\text{and}\quad
	\left\|f\right\|_{\dot B^{s}_{p,q,>}}\bydef
	\left(
	\sum_{\substack{k\in\mathbb{Z}\\ 2^k\geq \sigma c}} 2^{ksq}
	\left\|\Delta_{k}f\right\|_{L^p}^q\right)^\frac{1}{q},
\end{equation*}
as well as the corresponding variants of Chemin--Lerner semi-norms
\begin{equation*}
	\left\|f\right\|_{\widetilde L^r_t\dot B^{s}_{p,q,<}}\bydef
	\left(
	\sum_{\substack{k\in\mathbb{Z}\\ 2^k< \sigma c}} 2^{ksq}
	\left\|\Delta_{k}f\right\|_{L^r_tL^p_x}^q\right)^\frac{1}{q}
	\quad\text{and}\quad
	\left\|f\right\|_{\widetilde L^r_t\dot B^{s}_{p,q,>}}\bydef
	\left(
	\sum_{\substack{k\in\mathbb{Z}\\ 2^k\geq \sigma c}} 2^{ksq}
	\left\|\Delta_{k}f\right\|_{L^r_tL^p_x}^q\right)^\frac{1}{q},
\end{equation*}
for any $s\in\mathbb{R}$ and $0<p,q,r\leq\infty$ (with obvious modifications if $q$ is infinite). These families of semi-norms have been introduced in \cite{ah}. We will utilize them extensively throughout the upcoming sections of our work.

	\subsubsection{Control of high-frequency electromagnetic waves}\label{subsubsection:HF}

	Here, we establish key bounds on   high frequencies of electromagnetic fields. Lemma \ref{lemma-high F} below combines the damped Strichartz estimates for high electromagnetic frequencies from Lemma \ref{cor:maxwell} with 
	the paradifferential product laws on axisymmetric vector fields from Lemma \ref{paradifferential:1}.  

	The method behind the proof of this lemma is similar to the one used in \cite{ah} (see Lemma 3.8, therein). However, here, we further refine the method by introducing an additional high--low frequency decomposition of the source term $P(u\times B)$ in \eqref{lemma-high F}. This will allow us to obtain stronger estimates (see \eqref{E_>:5/2}, below).
	
\begin{lem}\label{lemma-high F}
Let $T\in (0,\infty]$ and  $(E,B)$  be a smooth axisymmetric solution to \eqref{Maxwell:system:*}, defined on    $[0,T)$,  for some initial data $(E_0,B_0)$ and some axisymmetric divergence-free vector field $u$. Assume further that $ E$ and $u$ are both   without swirl and that $B$ has   pure swirl.

 Then, for any $s\in  (-\frac{5}{2}, \frac{5}{2})$, $n\in [1,\infty]$ and any $q\in [\frac{4}{3},\infty]$, it holds that 
  \begin{equation*}
  	 \norm{ (E,B) }_{\widetilde{L} ^q_t \dot{B}^{s}_{2,n,>} }  \lesssim 
  	           c^{-\frac{2}{q}}\norm { (E_0,B_0)}_{\dot{B}^s_{2,n,>}}  +c^{\frac{1}{2} - \frac{2}{q}  }\norm u_{L^\infty _t \dot{H} ^1 \cap L^2_t\dot{H} ^2  }   \norm{ B }_{\widetilde{L} ^2_t \dot{B}^{s}_{2,n}  }  .
  \end{equation*}
	Moreover, at the endpoint   $s=\frac{5}{2}$, we have that 
	\begin{equation*}
		 \norm{ (E,B) }_{\widetilde{L} ^q_t  \dot{B}^{\frac{5}{2} }_{2,1,>}  }  
		  \lesssim  
		   c^{-\frac{2}{q}}\norm { (E_0,B_0)}_{\dot{B}^\frac{5}{2} _{2,1,>}}  +c^{-\frac{1}{2} + \frac{2}{p} - \frac{2}{q}  }\norm u_{L^\infty_t \dot{H} ^1 \cap L^2_t \dot{H} ^2  }   \norm{ B }_{\widetilde{L} ^p_t\dot{B}^{ \frac{5}{2}}_{2,1}  }, 
	\end{equation*} 
	as soon as  $   2 \leq p \leq  q \leq \infty$.
	Furthermore, in the case $p=2$, it holds that
	\begin{equation}\label{E_>:5/2}
		\begin{aligned}
			\norm{ (E,B) }_{\widetilde{L} ^q_t \dot{B}^{\frac{5}{2} }_{2,1,>}  }   
			&\lesssim   c^{-\frac{2}{q}}\norm { (E_0,B_0)}_{\dot{B}^\frac{5}{2} _{2,1,>}}  
			 \\
	        & \quad +c^{  \frac{1}{2} - \frac{2}{q}   }\norm u_{L^\infty _t \dot{H} ^1 \cap L^2_t\dot{H} ^2  }  \left(  \norm{ B }_{\widetilde{L} ^2_t \dot{B}^{ \frac{5}{2}}_{2,1,>}  } +  \norm{ B }_{L ^2_t \dot{B}^{ \frac{5}{2}}_{2,1,<} }    \right),
		\end{aligned}
	\end{equation} 
	for all $ q\in [2,\infty]$.
\end{lem}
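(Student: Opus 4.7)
The plan is to combine the dispersive/damping estimates of Lemma \ref{cor:maxwell} on high electromagnetic frequencies with the paradifferential laws of Lemma \ref{paradifferential:1}, whose extended range is unlocked by the axisymmetric colinearity: since $u$ is divergence-free without swirl and $B$ has pure swirl, condition \eqref{geometric:condition} is met by $(F,G)=(u,B)$ thanks to the remarks following Lemma \ref{paradifferential:1}. For the main (general-$s$) estimate, I would apply Lemma \ref{cor:maxwell} block by block with $(q,r)=(q,2)$ and $(\tilde q,\tilde r)=(4,2)$, which makes the output and source $c$-exponents collapse to exactly $c^{-2/q}$ and $c^{1/2-2/q}$, while the compatibility condition $\frac{1}{q}+\frac{1}{\tilde q}\leq 1$ becomes $q\geq \frac{4}{3}$. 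Taking $\ell^n(2^{js})$ sums over the high frequencies $2^j\geq \sigma c$ reduces everything to controlling
\begin{equation*}
	\norm{P(u\times B)}_{\widetilde L^{4/3}_t\dot B^{s}_{2,n,>}}\lesssim \norm{u}_{L^\infty_t\dot H^1\cap L^2_t\dot H^2}\norm{B}_{\widetilde L^{2}_t\dot B^{s}_{2,n}}.
\end{equation*}

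In the range $s\in (-\tfrac{3}{2},\tfrac{5}{2})$, this product bound is produced directly by \eqref{paraproduct:2} with $a_1=4$, $a_2=2$, $\eta=s$, together with the interpolation embedding $L^\infty_t\dot H^1\cap L^2_t\dot H^2\hookrightarrow L^4_tL^\infty\cap \widetilde L^4_t\dot B^{3/2}_{2,\infty}$, valid in three dimensions by Gagliardo--Nirenberg plus a Chemin--Lerner interpolation. To reach the remaining range $s\in(-\tfrac{5}{2},-\tfrac{3}{2}]$, I would split $B=B_<+B_>$ into its low ($2^k<\sigma c$) and high ($2^k\geq \sigma c$) frequency parts: for the low part, the exchange $(\sigma c)^{s-s'}$ obtained by raising the Besov index of the high-frequency source output to some $s'\in(-\tfrac{3}{2},\tfrac{5}{2})$ is compensated exactly by the reverse Bernstein gain $\norm{B_<}_{\dot B^{s'}_{2,n,<}}\leq (\sigma c)^{s'-s}\norm{B_<}_{\dot B^{s}_{2,n,<}}$; for the high part, the dominant contribution comes from the paraproduct $T_uB_>$, which only requires $u\in L^\infty$ and keeps $B_>$ at regularity $s$. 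At the $s=\tfrac{5}{2}$ endpoint, \eqref{paraproduct:2} is no longer available and I would replace it by \eqref{paraproduct:3} at its own endpoint $s_F=\tfrac{3}{2}$ (which forces summation index $1$); selecting $a_1=4$ and $a_2=p$, a quick computation with $\tilde q'=\tfrac{4p}{p+4}$ turns Lemma \ref{cor:maxwell}'s admissibility conditions into $p\geq 2$ and $p\leq q$, and the $c$-exponent $c^{1-2/q-2/\tilde q}$ evaluates to $c^{-1/2+2/p-2/q}$, exactly as claimed.

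The refined estimate \eqref{E_>:5/2} (the $p=2$ case) is the most delicate and needs a further high--low splitting of the source $B$. The piece $P(u\times B_>)$ is handled by the $p=2$ procedure above and yields the term $\norm{B}_{\widetilde L^2_t\dot B^{5/2}_{2,1,>}}$. For $P(u\times B_<)$, since $B_<$ is spectrally supported in $\{|\xi|<\sigma c\}$ while the output lives in $\{|\xi|\geq \sigma c\}$, only high-frequency blocks of $u$ can interact with $B_<$ to produce a high-frequency output; the key identity \eqref{remarkable:paraproduct} (available thanks to $\div u=0$ and the colinearity) then lets us transfer one derivative from $B_<$ onto $u$ after applying Leray's projector, and a paraproduct estimate on the resulting expression produces a bound involving the plain Lebesgue norm $\norm{B}_{L^2_t\dot B^{5/2}_{2,1,<}}$ rather than its Chemin--Lerner counterpart. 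The hardest aspect of the argument, in my view, will be tracking the Chemin--Lerner summation indices through the paradifferential estimates so that the endpoint index $1$ is preserved at $s=\tfrac{5}{2}$, and matching the high--low splitting of $B$ with the high-frequency output in \eqref{E_>:5/2} without picking up a logarithmic loss.
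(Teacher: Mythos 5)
Your argument for the first (general-$s$) estimate is essentially the paper's: apply Lemma \ref{cor:maxwell} blockwise with $r=\tilde r=2$ and $\tilde q'=\frac43$, sum over $2^j\ge\sigma c$, and control the source with \eqref{paraproduct:2} together with the embedding $L^\infty_t\dot H^1\cap L^2_t\dot H^2\hookrightarrow L^4_tL^\infty\cap\widetilde L^4_t\dot B^{3/2}_{2,\infty}$; that part is fine (and in fact the paper's own proof only covers $s\in(-\frac32,\frac52)$ this way, so your extra discussion of $s\le-\frac32$ is tangential). The genuine gap is at the endpoint $s=\frac52$. You propose to use \eqref{paraproduct:3} at its own endpoint $s=\frac32$ with $a_1=4$, and you correctly note that this endpoint forces the third summability index to equal $1$; but that means the factor you produce on the velocity is $\norm{u}_{\widetilde L^4_t\dot B^{3/2}_{2,1}}$, a Chemin--Lerner norm with inner time integration and outer $\ell^1$ sum. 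This quantity is \emph{not} known to be controlled by $\norm{u}_{L^\infty_t\dot H^1\cap L^2_t\dot H^2}$: Minkowski's inequality gives $\widetilde L^4_t\dot B^{3/2}_{2,1}\hookrightarrow L^4_t\dot B^{3/2}_{2,1}$ and not the converse, since the outer summability index $1$ is smaller than the time exponent $4$. The paper explicitly flags this exact obstruction (see the discussion around \eqref{first-try:ES}) and rejects the route you take, so your chain of estimates does not close with the claimed right-hand side.

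The missing idea is a high--low splitting of the \emph{velocity}, $u=u_\ell+u_h$ at frequency $\tfrac{\sigma c}2$, with a different Duhamel time exponent for each piece. One has $\norm{u_\ell}_{\widetilde L^\infty_t\dot B^{3/2}_{2,1}}\lesssim c^{1/2}\norm{u}_{L^\infty_t\dot H^1}$ and $\norm{u_h}_{\widetilde L^2_t\dot B^{3/2}_{2,1}}\lesssim c^{-1/2}\norm{u}_{L^2_t\dot H^2}$ — these particular Chemin--Lerner norms \emph{are} accessible blockwise — and the losses and gains of $c^{\pm1/2}$ recombine with the prefactors $c^{-1+\frac2p-\frac2q}$ and $c^{\frac2p-\frac2q}$ coming from the two applications of the high-frequency Strichartz estimate to yield exactly $c^{-\frac12+\frac2p-\frac2q}$. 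The refinement \eqref{E_>:5/2} inherits this gap: splitting only $B=B_<+B_>$, as you suggest, is not enough. In the paper both $u$ and $B$ are split, and the plain Lebesgue norm $\norm{B}_{L^2_t\dot B^{5/2}_{2,1,<}}$ enters not through the identity \eqref{remarkable:paraproduct} but because $P(u_\ell\times B_\ell)$ has Fourier support in $\{|\xi|<2\sigma c\}$, so its high-frequency part reduces to a single dyadic block, on which Chemin--Lerner and Lebesgue-in-time norms coincide.
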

\begin{proof}
    Applying     Lemma \ref{cor:maxwell} yields that
    \begin{equation*}
		\begin{aligned}
			\norm{\Delta_j (E,B)}_{L^q_t L^2 } 
			\lesssim
			c^{ -\frac 2q}	 
			\norm{\Delta_j (E_0,B_0)}_{L^2}  +c^{-1+2(\frac 1{ p}-\frac 1q)}
			 \norm{\Delta_j  P\big(u \times B\big)}_{L_t^{ p} L^{2} },
		\end{aligned}
	\end{equation*}
	for all $j\in\mathbb{Z}$, with $2^j\geq \sigma c$, and   any $1\leq p\leq q \leq \infty$. It then follows, for any $s\in \mathbb{R}$ and any $n\in [1,\infty]$, that
\begin{equation}\label{high-F-EB}
	\norm{ (E,B)}_{\widetilde{L} ^q_t \dot{B}^{s}_{2,n,>}  }  \lesssim    c^{-\frac{2}{q}}\norm { (E_0,B_0) }_{\dot{B}^s_{2,n,>}}   +c^{-1+2(\frac 1{ p }-\frac 1q)} \norm{   P\big(u \times B \big)}_{\widetilde{L} _t^{p } \dot{B}^{s}_{2,n }  }  .
\end{equation}

	   Thus,   choosing $p= \frac{4}{3}$  
and   utilizing  \eqref{paraproduct:2} we find, for all $s\in (-\frac{3}{2},\frac{5}{2})$,  $n\in [1,\infty]$ and any $q\in [\frac{4}{3}, \infty]$, that
\begin{equation*}
 \norm{ (E,B) }_{\widetilde{L} ^q_t \dot{B}^{s}_{2,n,>}  }   \lesssim   c^{-\frac{2}{q}}\norm { (E_0,B_0)}_{\dot{B}^s_{2,n,>}}
 +c^{ \frac{1}{2}  - \frac{2}{q} }\norm u_{L^4_t L^\infty \cap\widetilde{L} ^4_t \dot{B}^{ \frac{3}{2}}_{2, \infty}  }   \norm{ B }_{\widetilde{L} ^2_t \dot{B}^{s}_{2,n}  }  .
\end{equation*} 
	Hence, we conclude the proof of the first claim in the lemma by employing   the embedding 
	$$L^\infty _t \dot{H} ^1 \cap L^2 _t \dot{H} ^2   \hookrightarrow L ^4_t \dot{B}^{\frac{3}{2}}_{2,1}  \hookrightarrow L^4_t L^\infty \cap\widetilde{L} ^4_t \dot{B}^{ \frac{3}{2}}_{2, \infty}   . 
	$$ 
	
	We now turn  to the endpoint case $(s,n)= (\frac{5}{2},1)$, which corresponds to the second bound in the statement of the lemma.  	
	The   natural attempt to estimate the product $P(u\times B)$ in that case would be by applying \eqref{paraproduct:3}, which is the corresponding extension of \eqref{paraproduct:2}. Doing so would lead to the control
		\begin{equation}\label{first-try:ES}
		\begin{aligned}
		 \norm{ (E,B) }_{\widetilde{L} ^q_t \dot{B}^{\frac 52}_{2,1,>}  }
		 &\lesssim   c^{-\frac{2}{q}}\norm { (E_0,B_0)}_{\dot{B}^\frac52_{2,1,>}}
		 +  c^{-1+2\left(   \frac{1}{4}+ \frac{1}{p}  -\frac 1q \right)}
		 \norm{   P\big(u \times B \big)}_{\widetilde{L} _t^{\left( \frac{1}{4}+ \frac{1}{p} \right)^{-1} } \dot{B}^{\frac 52}_{2,1 }  } \\
	 &\lesssim   c^{-\frac{2}{q}}\norm { (E_0,B_0)}_{\dot{B}^\frac 52_{2,1,>}}  +c^{- \frac{1}{2} - \frac{2}{q} + \frac{2}{p}  }
	 \norm u_{\widetilde{L}^4_t \dot{B}^{\frac{3}{2}}_{2,1}  }   \norm{ B }_{\widetilde{L} ^p_t \dot{B}^{\frac52}_{2,1}  } ,
		\end{aligned}
		\end{equation} 
		as soon as 
		 $$ 1\leq \frac{4p}{p+4} \leq q \leq \infty.$$

	Observe then that, in order to conclude, the preceding estimate would require a stronger control on the velocity field $u$, for even though one has that
	\begin{equation*}
		\widetilde L^\infty _t \dot{H} ^1 \cap L^2 _t \dot{H} ^2
		\hookrightarrow   \widetilde{L}^4_t \dot{B}^{\frac{3}{2}}_{2,1},
	\end{equation*}
	it is unclear whether the embedding
	$$L^\infty _t \dot{H} ^1 \cap L^2 _t \dot{H} ^2   \hookrightarrow   \widetilde{L}^4_t \dot{B}^{\frac{3}{2}}_{2,1}   $$
	holds or not. Accordingly, \eqref{first-try:ES} does not seem to be useful.
	 
	  Instead, we have to prove the desired estimate by  ``hand'' (that is, by cooking up a suitable interpolation argument). To that end, we first introduce the decomposition 
	$$ u= u_\ell + u_{h} \bydef \left(\mathds{ 1}_{|D|<  \frac{\sigma c}{2} }+ \mathds{ 1}_{|D|\geq  \frac{\sigma c }{2} }  \right) u    . $$
	Then,    by splitting the source term $P(u\times B)$ according to the latter decomposition of $u$ and by applying \eqref{high-F-EB} to each Maxwell system corresponding to the source terms $P(u_\ell\times B)$ and $P(u_h\times B)$, with different values of $p$, one obtains that
	\begin{equation} \label{AAA}
	\begin{aligned}
	 \norm{ (E,B)}_{\widetilde{L} ^q_t \dot{B}^{s}_{2,1,>}  }  &\lesssim     c^{-\frac{2}{q}}\norm { (E_0,B_0) }_{\dot{B}^s_{2,1,>}}   +c^{-1 +  \frac{2}{p} - \frac{2}{q}    } \norm{   P\big(u_\ell \times B \big)}_{\widetilde{L} _t^{p } \dot{B}^{s }_{2,1,> }  }  \\
	  & \quad+c^{ \frac{2}{p}- \frac{2}{q}}  \norm{   P\big(u _{h}\times B \big)}_{\widetilde{L} _t^{ \left( \frac{1}{2} + \frac{1}{p} \right)^{-1} }\dot{B}^{s }_{2,1,> } } ,
	\end{aligned}
\end{equation}
	for all $2\leq p\leq q$ and all $s\in\mathbb{R}$.
	Therefore, applying the   product law estimate \eqref{paraproduct:3}, we infer that
	\begin{equation*}
	\begin{aligned} 
	 \norm{ (E,B)}_{\widetilde{L} ^q_t \dot{B}^{\frac 52}_{2,1,>}  }  &\lesssim     c^{-\frac{2}{q}}\norm { (E_0,B_0) }_{\dot{B}^\frac 52_{2,1,>}} \\
	 &\quad   +c^{-1 +  \frac{2}{p} - \frac{2}{q}    }  \norm {u_{\ell}}_{\widetilde{L}^ {\infty}_t \dot{B}^{\frac{3}{2}}_{2,1}  }   \norm{ B }_{\widetilde{L} ^p_t \dot{B}^{\frac 52}_{2,1}  } + c^{ \frac{2}{p}- \frac{2}{q}} \norm {u_{h}}_{\widetilde{L}^{2} _t \dot{B}^{\frac{3}{2}}_{2,1}  }   \norm{ B }_{\widetilde{L} ^p_t \dot{B}^{\frac 52}_{2,1}  }  .
	\end{aligned}
	\end{equation*} 
	
	Now, observing that 
	\begin{equation}\label{u:ell}
	\norm {u_{\ell}}_{\widetilde{L}^ {\infty}_t \dot{B}^{\frac{3}{2}}_{2,1}  }
	\lesssim
	c ^{\frac{1}{2}}
	\norm {u_{\ell}}_{{L}^ {\infty}_t \dot{B}^{1}_{2,\infty}  }   
	\lesssim
	c ^{\frac{1}{2}}\norm {u }_{L_t^{\infty}  \dot{H}^1  }  
	\end{equation}
	and 
	\begin{equation*}
	\norm {u_{h}}_{\widetilde{L}^ {2}_t \dot{B}^{\frac{3}{2}}_{2,1}  }   \lesssim  c ^{-\frac{1}{2}}\norm {u }_{L_t^2 \dot{H}^2  }   
	\end{equation*}
	leads to the desired control
	\begin{equation*}
	\begin{aligned}
	 \norm{ (E,B)}_{\widetilde{L} ^q_t \dot{B}^{\frac 52}_{2,1,>}  }
	 &\lesssim     c^{-\frac{2}{q}}\norm { (E_0,B_0) }_{\dot{B}^\frac 52_{2,1,>}}   +c^{ - \frac{1}{2} - \frac{2}{q}+\frac{2}{p}}  \norm u_{L_t^{\infty}  \dot{H}^1   \cap L_t^2 \dot{H}^2  }    \norm{ B }_{\widetilde{L} ^p_t \dot{B}^{\frac 52}_{2,1}  },
	\end{aligned}
\end{equation*}	   
for any $2\leq p\leq q$.
This takes care of the second bound in the statement of the lemma.

	 Finally, in order for us to justify the last estimate of the lemma (which is an improvement of the preceding bound in the case $p=2$), we need to further decompose $B$ in the source term $ P(u_\ell\times  B)$.
	 To that end, we write
	 $$ B= B_\ell + B_h \bydef \left(\mathds{ 1}_{|D|< \frac{\sigma c}{2} }+ \mathds{ 1}_{|D|\geq \frac{\sigma c}{2} }  \right) B ,     $$
	which allows to deduce from \eqref{AAA} that
	\begin{equation} \label{VVV}
		\begin{aligned}
			\norm{ (E,B)}_{\widetilde{L} ^q_t \dot{B}^{ \frac{5}{2}}_{2,1,>}  }
			&\lesssim     c^{-\frac{2}{q}}\norm { (E_0,B_0) }_{\dot{B}^ \frac{5}{2} _{2,1,>}}
			+c^{- \frac{2}{q}    } \norm{   P\big(u_\ell \times B_\ell \big)}_{\widetilde{L} _t^{2 } \dot{B}^{\frac{5}{2}}_{2,1,> }  }
			\\
			&\quad
			+  c^{  - \frac{2}{q}    } \norm{   P\big(u_\ell \times B_h \big)}_{\widetilde{L} _t^{2 } \dot{B}^{\frac{5}{2}}_{2,1 } }
			+c^{1 - \frac{2}{q}} \norm{   P\big(u _{h}\times B \big)}_{L _t^{1} \dot{B}^{\frac{5}{2}}_{2,1 }  } ,
		\end{aligned}
	\end{equation}
	for all $q\geq 2$, where we have used that
	$$  \widetilde{L} _t^{1} \dot{B}^{\frac{5}{2}}_{2,1 }=L _t^{1} \dot{B}^{\frac{5}{2}}_{2,1 }   .   $$

	  Now, observe that 
	  $$ \supp\left(  \widehat{ u_\ell \times B_\ell} \right) \subset \left\{ \xi : |\xi|< \sigma c \right\},$$
	which implies that
	\begin{equation*}
		\Delta_j\left(  u_\ell \times B_\ell \right)\equiv 0,
	\end{equation*}
	whenever $2^j\geq 2\sigma c$.
	Consequently, it follows that 
	    \begin{equation*}
	     \norm{   P\big(u_\ell \times B_\ell \big)}_{\widetilde{L} _t^{2 } \dot{B}^{\frac{5}{2}}_{2,1,> }  }
		=
		2^{\frac 52j}\norm{   P\Delta_j \big(u_\ell \times B_\ell \big)}_{{L} _t^{2 } L^2  }
		=
		\norm{   P\big(u_\ell \times B_\ell \big)}_{L _t^{2 } \dot{B}^{\frac{5}{2}}_{2,1 ,> }  } ,
	   \end{equation*}
	where $j$ is the only integer value which satisfies $\sigma c\leq 2^j<2\sigma c$.
	
	Hence,    by applying \eqref{paraproduct:3} for classical Besov-space-valued Lebesgue spaces, we obtain that
	\begin{equation*}
		\begin{aligned}
			\norm{   P\big(u_\ell \times B_\ell \big)}_{\widetilde{L} _t^{2 } \dot{B}^{\frac{5}{2}}_{2,1,> }  }
			&=
			\norm{   P\big(u_\ell \times B_\ell \big)}_{L _t^{2 } \dot{B}^{\frac{5}{2}}_{2,1,> }  }   
			\\
			&\lesssim \norm{    u_\ell   }_{L _t^{\infty } \dot{B}^{\frac{3}{2}}_{2,1   }  } \norm{   B_\ell }_{L _t^{2 } \dot{B}^{\frac{5}{2}}_{2,1    }  }
			\lesssim c^{\frac{1}{2}}\norm{    u   }_{L _t^{\infty } \dot{H}^1 } \norm{   B }_{L _t^{2 } \dot{B}^{\frac{5}{2}}_{2,1,<  }  },
		\end{aligned}
	\end{equation*} 
	where we used \eqref{u:ell}.
	Similarly, we find, by employing  \eqref{paraproduct:3} and \eqref{u:ell}, again, that
	\begin{equation*}
		\begin{aligned}
			\norm{   P\big(u_\ell \times B_h \big)}_{\widetilde{L} _t^{2 } \dot{B}^{\frac{5}{2}}_{2,1 }  }
			&\lesssim
			\norm{    u_\ell   }_{\widetilde{L}  _t^{\infty } \dot{B}^{\frac{3}{2}}_{2,1   }  }
			\norm{   B_h }_{\widetilde{L}  _t^{2 } \dot{B}^{\frac{5}{2}}_{2,1   }  }
			\\
			&\lesssim
			\norm{    u_\ell   }_{\widetilde{L}  _t^{\infty } \dot{B}^{\frac{3}{2}}_{2,1   }  }
			\left(
			\norm{   B  }_{\widetilde{L}  _t^{2 } \dot{B}^{\frac{5}{2}}_{2,1,>    }  }
			+ \sum_{\frac{\sigma c}4< 2^j<\sigma c} 2^{\frac52j}\norm{   \Delta_j B }_{L^2_t  L^2 }
			\right)
			\\
			&\lesssim
			c^{\frac{1}{2}}\norm{    u   }_{L _t^{\infty } \dot{H}^1 }
			\left(
			\norm{   B  }_{\widetilde{L}  _t^{2 } \dot{B}^{\frac{5}{2}}_{2,1,>    }  }
			+  \norm{   B }_{L^2_t  \dot{B}^{\frac{5}{2}}_{2,1  ,< }  }
			\right).
		\end{aligned}
	\end{equation*}
	
	As for the last term in \eqref{VVV},  applying \eqref{paraproduct:3},  again, in classical Besov-space-valued Lebesgue spaces, entails that
	\begin{equation*}
	\begin{aligned}
	\norm{   P\big(u _{h}\times B \big)}_{L _t^{1} \dot{B}^{\frac{5}{2}}_{2,1 }  }  &\lesssim   \norm{    u_h   }_{L _t^{2 } \dot{B}^ \frac{3}{2}_{2,1} }  \norm{   B  }_{L  _t^{2 } \dot{B}^{\frac{5}{2}}_{2,1     }  }  \\
	&\lesssim      c^{-\frac{1}{2}}\norm{    u   }_{L _t^{2 } \dot{H}^2 } \left( \norm{   B  }_{\widetilde{L}  _t^{2 }\dot{B}^{\frac{5}{2}}_{2,1,>    }  } +  \norm{   B }_{L^2_t  \dot{B}^{\frac{5}{2}}_{2,1  ,< }  } \right).
	\end{aligned}
	\end{equation*} 
	All in all, gathering the previous estimates and incorporating them into \eqref{VVV} yields  the control \eqref{E_>:5/2}, which completes the proof of the lemma.
\end{proof}

\subsubsection{Control of low-frequency electromagnetic waves}\label{subsubsection:LF}

It is clear from Lemmas \ref{cor:maxwell} and \ref{cor:parabolic:maxwell}  that solutions to the damped Maxwell system enjoy  various types of bounds in different regions of low and high frequencies. 

Here, we intend to shed light on the low-frequency control of electromagnetic fields solving \eqref{Maxwell:system:*}. In broad terms,  the low-frequency bounds on the electric field $E$ are similar to the same bounds in the high-frequency regime. However,   the low-frequency part of the magnetic field $B$ enjoys parabolic-type estimates, which are consistent with the limiting system \eqref{MHD*} as $c$ goes to infinity.   A more precise formulation of that principle is given in the next lemma.

 \begin{lem}\label{low:freq:estimates} 
Let $T\in (0,\infty]$ and  $(E,B)$  be a smooth axisymmetric solution to \eqref{Maxwell:system:*}  on    $[0,T)$,  for some initial data $(E_0,B_0)$ and some axisymmetric divergence-free vector field $u$. Assume further that $ E$ and $u$ are both   without swirl and that $B$ has  pure swirl.

	Then, for any $\alpha\in [0,1]$, $n\in [1,\infty]$ and $ (m,p )\in [2,\infty]^2 $, with 
	$$ \alpha + \frac{2}{m} \leq \frac{3}{2},$$
	 and for any $s< \frac{5}{2}$, with $ s+\alpha + \frac{2}{m}>0$, one has the following low-frequency estimates 	\begin{equation}\label{frequencies:1}
		\begin{aligned}
			\norm{E}_{L^q_t\dot B^{s+\alpha+ \frac{2}{m}- \frac{3}{2}}_{2,n,<} }
			&\lesssim
			c^{-\frac 2q}\norm{E_0}_{\dot B^{s+\alpha+ \frac{2}{m}- \frac{3}{2}}_{2,n,<}}
			+c^{-1}\norm{B_0}_{\dot B^{s+\alpha+ \frac{2}{m}- \frac{1}{2} - \frac{2}{q}}_{2,q,<}}
			\\
			&\quad+c^{2\left(\frac 1p + \frac{1}{m}-\frac 1q\right)-1}
			\norm{u}_{L ^{m}_t\dot B^{\alpha + \frac{2}{m}}_{2,1} } 
			\norm{B}_{L ^{p}_t\dot B^s_{2,n} },
				\end{aligned}
			\end{equation}
			as soon as 
 $1< \left( \frac{1}{m} + \frac{1}{p}\right)^{-1}  \leq  q < \infty   , $ 			
			as well as 
				\begin{equation}\label{frequencies:1-B***}
		\begin{aligned}
			\norm{B}_{L^q_t\dot B^{s+\alpha  - \frac{1}{2}+ \frac 2q-\frac 2p}_{2,1,<} }
			&\lesssim
			c^{-1}\norm{E_0}_{\dot B^{s+\alpha  + \frac{1}{2} -\frac 2p}_{2,q,<}}
			+\norm{B_0}_{\dot B^{s+\alpha  - \frac{1}{2} -\frac 2p}_{2,q,<}}
			\\
			&\quad+\norm{u}_{L ^{m}_t\dot B^{\alpha + \frac{2}{m}}_{2,1} } 
			\norm{B}_{L ^{p}_t\dot B^s_{2,\infty} },
		\end{aligned}
	\end{equation}
	whenever 
	 $1< \left( \frac{1}{m} + \frac{1}{p}\right)^{-1}  < q < \infty   .$ 		
	 	
	Moreover, in the case where $$ 1< \left( \frac{1}{m} + \frac{1}{p}\right)^{-1}= q < \infty   , $$  it holds that 
	\begin{equation}\label{frequencies:1b}
	\begin{aligned}
			\norm{B}_{L^q_t\dot B^{s+\alpha  - \frac{1}{2}+ \frac 2q-\frac 2p}_{2,n,<} }
			 &\lesssim
			c^{-1}\norm{E_0}_{\dot B^{s+\alpha  + \frac{1}{2} -\frac 2p}_{2,q,<}}
			+\norm{B_0}_{\dot B^{s+\alpha  - \frac{1}{2} -\frac 2p}_{2,q,<}} +\norm{u}_{L ^{m}_t\dot B^{\alpha + \frac{2}{m}}_{2,1} } 
			\norm{B}_{L ^{p}_t\dot B^s_{2,n} }.
			\end{aligned} 
	\end{equation}
	
	At last, at the endpoint $s=\frac{5}{2}$, we have
	 \begin{equation}\label{frequencies:2}
		\begin{aligned}
			\norm{E}_{L^q_t\dot B^{1+\alpha+ \frac{2}{m} }_{2,n,<} }
			&\lesssim
			c^{-\frac 2q}\norm{E_0}_{\dot B^{1+\alpha+ \frac{2}{m} }_{2,n,<}}
			+c^{-1}\norm{B_0}_{\dot B^{2+\alpha+ \frac{2}{m}  - \frac{2}{q}}_{2,q,<}}
			\\
			&\quad+c^{2\left(\frac 1p + \frac{1}{m}-\frac 1q\right)-1}
			\norm{u}_{L ^{m}_t\dot B^{\alpha + \frac{2}{m}}_{2,1} } 
			\norm{B}_{L ^{p}_t\dot B^{\frac{5}{2}} _{2,1} },
			\\
			\norm{B}_{L^q _t\dot B^{2+\alpha   + \frac 2q-\frac 2p}_{2,n,<} }
			&\lesssim
			c^{-1}\norm{E_0}_{\dot B^{3+\alpha    -\frac 2p}_{2,q,<}}
			+\norm{B_0}_{\dot B^{2+\alpha   -\frac 2p}_{2,q,<}} +\norm{u}_{L ^{m}_t\dot B^{\alpha + \frac{2}{m}}_{2,1} } 
			\norm{B}_{L ^{p}_t\dot B^{\frac{5}{2}} _{2,1} },
		\end{aligned}
	\end{equation} 
as long as  	 $1< \left( \frac{1}{m} + \frac{1}{p}\right)^{-1}  \leq  q < \infty   .$ 	 
\end{lem}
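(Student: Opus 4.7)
The plan is to apply Lemma \ref{cor:parabolic:maxwell} to the Maxwell system \eqref{Maxwell:system:*}, viewed as the damped Maxwell system \eqref{damped:Maxwell:system} with source term $G = -\sigma P(u \times B)$, and then estimate the resulting product $P(u \times B)$ via the paradifferential laws from Lemma \ref{paradifferential:1}. The axisymmetric assumptions on $u$ (no swirl) and $B$ (pure swirl) ensure that the geometric compatibility condition \eqref{geometric:condition} is met, since convolutions with radial mollifiers preserve the axisymmetric no-swirl/pure-swirl structure, so that $\nabla\times\varphi*u$ and $\psi*B$ both have pure swirl and are thus colinear. Consequently, the extended paraproduct estimates \eqref{paraproduct:1}--\eqref{paraproduct:3} are available, and this is precisely what enables us to reach the index $\eta=\frac 52$ in the endpoint cases, as highlighted in Remark \ref{classical time-space:ES}.

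For the bound \eqref{frequencies:1}, I would first apply the first estimate of Lemma \ref{cor:parabolic:maxwell} with time exponent $q$ (playing the role of $m$ there) and with the inner Besov regularity chosen so that the output index is $s+\alpha+\frac 2m-\frac 32$. This directly produces the initial data contributions $c^{-\frac 2q}\norm{E_0}_{\dot B^{s+\alpha+\frac 2m-\frac 32}_{2,n,<}}$ and $c^{-1}\norm{B_0}_{\dot B^{s+\alpha+\frac 2m-\frac 12-\frac 2q}_{2,q,<}}$, together with a nonlinear contribution of the form $c^{-1+\frac 2r-\frac 2q}\norm{P(u\times B)}_{L^r_t\dot B^{s+\alpha+\frac 2m-\frac 32}_{2,n}}$. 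I would then invoke \eqref{paraproduct:1} with $s_{\mathrm{pp}}=\alpha+\frac 2m$ and $\eta_{\mathrm{pp}}=s$, whose constraints $s_{\mathrm{pp}}<\frac 32$, $\eta_{\mathrm{pp}}<\frac 52$ and $s_{\mathrm{pp}}+\eta_{\mathrm{pp}}>0$ match the hypotheses; the time exponent produced is $r=(\frac 1m+\frac 1p)^{-1}\leq q$, which fits Lemma \ref{cor:parabolic:maxwell}. Collecting the powers of $c$ yields exactly $c^{2(\frac 1p+\frac 1m-\frac 1q)-1}$, as claimed. When $\alpha+\frac 2m=\frac 32$, I would instead invoke \eqref{paraproduct:2}, which covers this borderline regularity thanks to the geometric condition.

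The estimates \eqref{frequencies:1-B***} and \eqref{frequencies:1b} for the magnetic field follow the same template, but use the second and third estimates of Lemma \ref{cor:parabolic:maxwell} respectively: the strict inequality $(\frac 1m+\frac 1p)^{-1}<q$ corresponds to the restriction $1<r<m<\infty$ of the second estimate, whereas equality corresponds to the third estimate valid for $r=m$. The paradifferential step is identical, and one simply reads off the different Besov indices produced by the parabolic smoothing on $B$. For the endpoint case \eqref{frequencies:2} with $s=\frac 52$, I would replace the use of \eqref{paraproduct:1}/\eqref{paraproduct:2} by \eqref{paraproduct:3}, which is precisely designed to handle $\eta_{\mathrm{pp}}=\frac 52$ and gains one derivative on the $u$ factor.

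The main obstacle is purely bookkeeping: keeping track of the various admissibility conditions (which time index $r$ is allowed in Lemma \ref{cor:parabolic:maxwell}, and which paraproduct estimate is admissible for the pair $(s_{\mathrm{pp}},\eta_{\mathrm{pp}})$) and verifying that every exponent identity involving $\alpha$, $m$, $p$, $q$, $n$ lines up between the output of the Maxwell estimate and the output of the paraproduct estimate. The genuine analytic content is the combination of the maximal parabolic regularity for the low-frequency part of the damped Maxwell system with the improved product laws afforded by the axisymmetric structure; everything else amounts to choosing parameters appropriately in these two ingredients.
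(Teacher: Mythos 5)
Your overall strategy is the right one and matches the paper's: feed the source $G=-\sigma P(u\times B)$ into the linear low-frequency estimates for the damped Maxwell system, then close the product with the axisymmetric paraproduct laws of Lemma \ref{paradifferential:1}, using \eqref{paraproduct:1} (or \eqref{paraproduct:2} at the borderline $\alpha+\frac 2m=\frac 32$) for $s<\frac 52$ and \eqref{paraproduct:3} at $s=\frac 52$. Your parameter bookkeeping is also correct: the choice $r=(\frac 1m+\frac 1p)^{-1}$ reproduces the prefactor $c^{2(\frac 1p+\frac 1m-\frac 1q)-1}$, and the dichotomy between the second and third estimates of Lemma \ref{cor:parabolic:maxwell} according to whether $(\frac 1m+\frac 1p)^{-1}<q$ or $=q$ is exactly how the paper obtains \eqref{frequencies:1-B***} versus \eqref{frequencies:1b}.

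There is, however, one ingredient you are missing. The norms $\norm{\cdot}_{\dot B^{\,\cdot}_{2,n,<}}$ run over all dyadic blocks with $2^j<\sigma c$, whereas Lemma \ref{cor:parabolic:maxwell} only controls $\chi(c^{-1}D)(E,B)$ for a fixed compactly supported cut-off $\chi$. To cover the whole low-frequency range with that lemma alone you would need $\chi\equiv 1$ on $\{|\xi|\leq 2\sigma\}$, and this is precisely where the maximal-parabolic-regularity estimates are delicate: the eigenvalues of the damped Maxwell propagator are real only for $|\xi|<\sigma c/2$ and become complex on the band $\sigma c/2\leq|\xi|<\sigma c$, so the parabolic behaviour underlying Lemma \ref{cor:parabolic:maxwell} cannot be taken for granted there. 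The paper's proof therefore splits the low frequencies at $2^j=\sigma c/2$: the blocks with $2^j<\sigma c/2$ are handled exactly as you propose, while the remaining dyadic band $\sigma c/2\leq 2^j<\sigma c$ is treated with the low-frequency \emph{Strichartz} estimates of Lemma \ref{cor:maxwell}, applied with $r=\tilde r=2$ and $\tilde q'=(\frac 1m+\frac 1p)^{-1}$; one checks that for a single dyadic block these produce the same right-hand side (the admissibility condition $\frac 1q+\frac 1{\tilde q}\leq 1$ is exactly $q\geq(\frac 1m+\frac 1p)^{-1}$). You should add this patch — or else justify that the cut-off in Lemma \ref{cor:parabolic:maxwell} may be taken with support covering the full region $\{|\xi|\leq 2\sigma\}$, which the paper deliberately avoids relying on. Apart from this, your argument coincides with the paper's proof.
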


\begin{proof}
	Similarly to  \cite[Lemma 3.9]{ah}, the proof   hinges upon the combination of Lemmas \ref{paradifferential:1}, \ref{cor:maxwell}   and \ref{cor:parabolic:maxwell}. 
Thus, on the one hand, applying the low-frequency estimate  from Lemma \ref{cor:maxwell}, for parameter values such that
$$r=\tilde{r}=2$$
and
$$\tilde{q}^\prime=  \left( \frac{1}{m}+ \frac{1}{p}\right)^{-1},$$
yields, as soon as $1 \leq  \left( \frac{1}{m}+ \frac{1}{p}\right)^{-1} \leq q \leq \infty$, that
\begin{equation*} 
		\begin{aligned}
		2^{j \left( s+\alpha+ \frac{2}{m}- \frac{3}{2} \right)}	\norm{\Delta_j E}_{L^q_tL^2}
			&\lesssim
			c^{-\frac 2q}\norm{E_0}_{\dot B^{s+\alpha+ \frac{2}{m}- \frac{3}{2}}_{2,\infty,<}}
			+c^{-1}\norm{B_0}_{\dot B^{s+\alpha+ \frac{2}{m}- \frac{1}{2} - \frac{2}{q}}_{2,\infty,<}}
			\\
			&\quad+c^{2\left(\frac 1p + \frac{1}{m}-\frac 1q\right)-1}
			\norm{P(u\times B) }_{L_t^{ \left(\frac{1}{m}+ \frac{1}{p} \right)^{-1}} \left( \dot B^{s+\alpha+ \frac{2}{m}- \frac{3}{2}}_{2,\infty} \right)}   
			\end{aligned}
	\end{equation*}
	and
	\begin{equation}\label{R0} 
		\begin{aligned}
		2^{j \left( s+\alpha  - \frac{1}{2}-\frac 2p + \frac 2q \right)}	\norm{\Delta_j B}_{L^q_tL^2}
			&\lesssim
			c^{-1}\norm{E_0}_{\dot B^{s+\alpha  + \frac{1}{2}  -\frac 2p}_{2,\infty ,<}}
			+\norm{B_0}_{\dot B^{s+\alpha  - \frac{1}{2} -\frac 2p}_{2,\infty ,<}}\\
			&\quad+\norm{P(u\times B) }_{L_t^{ \left(\frac{1}{m}+ \frac{1}{p} \right)^{-1} }\left( \dot B^{s+ \alpha + \frac{2}{m} - \frac{3}{2}}_{2,\infty }\right)} ,
		\end{aligned}
	\end{equation}
	for all $j\in \mathbb{Z}$ with $\frac{\sigma c}{2} \leq 2^j < \sigma c$.
	
	 On the other hand, employing the first and the second estimates from Lemma \ref{cor:parabolic:maxwell} entails
		\begin{equation*}
		\begin{aligned}
			\norm{\mathds{1}_{\{2^j<\frac{\sigma c}2\}}2^{j \left( s+\alpha+ \frac{2}{m}- \frac{3}{2} \right)}\norm{\Delta_j E}_{L^2_x}}_{L^q_t\ell^n_j}
			&\lesssim
			c^{-\frac 2q}\norm{E_0}_{\dot B^{  s+\alpha+ \frac{2}{m}- \frac{3}{2}  }_{2,n,<}}
			+c^{-1}\norm{B_0}_{\dot B^{s+\alpha+ \frac{2}{m}- \frac{1}{2}-\frac 2q}_{2,q,<}}
			\\
			&\quad +c^{2\left(\frac 1p + \frac{1}{m}-\frac 1q\right)-1}
			\norm{P(u\times B) }_{L_t^{ \left(\frac{1}{m}+ \frac{1}{p} \right)^{-1}} \left( \dot B^{s+\alpha+ \frac{2}{m}- \frac{3}{2}}_{2,n} \right)}  ,
		\end{aligned}
	\end{equation*}
	for any $1< \left( \frac{1}{m} + \frac{1}{p} \right)^{-1} \leq q< \infty$ and $1\leq n\leq\infty$, as well as
	\begin{equation*}
		\begin{aligned}
			\norm{\mathds{1}_{\{2^j<\frac{\sigma c}2\}}2^{j \left( s+\alpha  - \frac{1}{2}+ \frac 2q-\frac 2p  \right)}
			\norm{\Delta_j B}_{L^2_x}}_{L^q_t\ell^1_j}
			&\lesssim
			c^{-1}\norm{E_0}_{\dot B^{s+\alpha+\frac{1}{2} -\frac 2p}_{2,q,<}}
			+\norm{B_0}_{\dot B^{s+\alpha-\frac{1}{2}-\frac 2p}_{2,q,<}}
			\\
			&\quad+\norm{P(u\times B)}_{L_t^{ \left(\frac{1}{m}+ \frac{1}{p} \right)^{-1}} \left( \dot B^{s+\alpha+ \frac{2}{m}- \frac{3}{2}}_{2, \infty} \right)},
		\end{aligned}
	\end{equation*}
	for any $1<  \left( \frac{1}{m} + \frac{1}{p} \right)^{-1}< q< \infty$.

	All in all, by combining the preceding estimates, we arrive at the conclusion that
	\begin{eqnarray*}
		\norm{E}_{L^q_t\dot B^{  s+\alpha+ \frac{2}{m}- \frac{3}{2} }_{2,n,<}}
		&\lesssim &
		c^{-\frac 2q}\norm{E_0}_{\dot B^{ s+\alpha+ \frac{2}{m}- \frac{3}{2} }_{2,n,<}}
		+c^{-1}\norm{B_0}_{\dot B^{ s+\alpha+ \frac{2}{m}- \frac{1}{2}- \frac{2}{q} }_{2,q,<}}\\
		&&+c^{2\left(\frac 1p + \frac{1}{m}-\frac 1q\right)-1} \norm{P(u\times B)}_{L_t^{ \left(\frac{1}{m}+ \frac{1}{p} \right)^{-1}}\dot B^{ s+\alpha+ \frac{2}{m}- \frac{3}{2} }_{2,n}},
	\end{eqnarray*}
	for any $1<  \left( \frac{1}{m} + \frac{1}{p} \right)^{-1}\leq q< \infty$ and $1\leq n\leq\infty$, as well as
	\begin{equation*}
			\norm{B}_{L^q_t\dot B^{s+\alpha  - \frac{1}{2}+ \frac 2q-\frac 2p }_{2,1,<}}
			\lesssim
			c^{-1}\norm{E_0}_{\dot B^{s+\alpha+\frac{1}{2} -\frac 2p}_{2,q,<}}
			+\norm{B_0}_{\dot B^{s+\alpha-\frac{1}{2}-\frac 2p}_{2,q,<}}
			+\norm{P(u\times B)}_{L_t^{ \left(\frac{1}{m}+ \frac{1}{p} \right)^{-1}} \left( \dot B^{s+\alpha+ \frac{2}{m}- \frac{3}{2}}_{2, \infty} \right)},
	\end{equation*}
	for any $1<  \left( \frac{1}{m} + \frac{1}{p} \right)^{-1}< q< \infty$.
	Therefore, applying the product estimates from Lemma \ref{paradifferential:1}   concludes the proof of \eqref{frequencies:1} and \eqref{frequencies:1-B***}.
	
	As for the case $ 1< \left( \frac{1}{m} + \frac{1}{p}\right)^{-1}= q < \infty   ,$ we apply  the third estimate from Lemma \ref{cor:parabolic:maxwell} instead of the second one to infer that 
	\begin{equation*}
		\begin{aligned}
			\norm{\mathds{1}_{\{2^j<\frac{\sigma c}2\}}2^{j \left( s+\alpha  - \frac{1}{2}+ \frac 2q-\frac 2p  \right)}
			\norm{\Delta_j B}_{L^2_x}}_{L^q_t\ell^n_j}
			&\lesssim
			c^{-1}\norm{E_0}_{\dot B^{s+\alpha+\frac{1}{2} -\frac 2p}_{2,q,<}}
			+\norm{B_0}_{\dot B^{s+\alpha-\frac{1}{2}-\frac 2p}_{2,q,<}}
			\\
			&\quad+\norm{P(u\times B)}_{L_t^{ \left(\frac{1}{m}+ \frac{1}{p} \right)^{-1}} \left( \dot B^{s+\alpha+ \frac{2}{m}- \frac{3}{2}}_{2, n} \right)}.
		\end{aligned}
	\end{equation*}
	Hence,   combining the latter estimate with \eqref{R0}, we obtain that
	\begin{equation*}
			\norm{B}_{L^q_t\dot B^{s+\alpha  - \frac{1}{2}+ \frac 2q-\frac 2p }_{2,n,<}}
			\lesssim
			c^{-1}\norm{E_0}_{\dot B^{s+\alpha+\frac{1}{2}-\frac 2p}_{2,q,<}}
			+\norm{B_0}_{\dot B^{s+\alpha-\frac{1}{2}-\frac 2p}_{2,q,<}}
			+\norm{P(u\times B)}_{L_t^{ \left(\frac{1}{m}+ \frac{1}{p} \right)^{-1}} \left( \dot B^{s+\alpha+ \frac{2}{m}- \frac{3}{2}}_{2, n} \right)}.
	\end{equation*}
	 Our claim \eqref{frequencies:1b} follows then by employing Lemma \ref{paradifferential:1}, again.
	 
	  At last, for the endpoint case $s=\frac{5}{2}$,  we deduce from the previous cases above, for any $n\in [1,\infty]$, that
	 \begin{eqnarray*}
		\norm{E}_{L^q_t\dot B^{  1+\alpha+ \frac{2}{m}  }_{2,n,<}}
		&\lesssim &
		c^{-\frac 2q}\norm{E_0}_{\dot B^{ 1+\alpha+ \frac{2}{m}  }_{2,n,<}}
		+c^{-1}\norm{B_0}_{\dot B^{ 2 + \alpha+ \frac{2}{m} - \frac{2}{q} }_{2,q,<}}\\
		&&+c^{2\left(\frac 1p + \frac{1}{m}-\frac 1q\right)-1} \norm{P(u\times B)}_{L_t^{ \left(\frac{1}{m}+ \frac{1}{p} \right)^{-1}}\dot B^{ 1+\alpha+ \frac{2}{m}  }_{2,n}},
	\end{eqnarray*}
	and
	 \begin{equation*}
			\norm{B}_{L^q_t\dot B^{2+\alpha   + \frac 2q-\frac 2p }_{2,n,<}}
			\lesssim
			c^{-1}\norm{E_0}_{\dot B^{3+\alpha -\frac 2p}_{2,q,<}}
			+\norm{B_0}_{\dot B^{2+\alpha -\frac 2p}_{2,q,<}}
			+\norm{P(u\times B)}_{L_t^{ \left(\frac{1}{m}+ \frac{1}{p} \right)^{-1}} \left( \dot B^{1+\alpha+ \frac{2}{m} }_{2, n} \right)},
	\end{equation*}
	as long as 
	$1<  \left( \frac{1}{m} + \frac{1}{p} \right)^{-1} \leq q< \infty$.
	Therefore, applying Lemma \ref{paradifferential:1}, again, to estimate the source term concludes the proof.
\end{proof}

\begin{rem}
Observe that combining \eqref{E_>:5/2} and \eqref{frequencies:2}, with  the  choice of parameters $$ q=p=2, \quad m=4, \quad \alpha=n= 1,$$
and    employing the interpolation inequality
$$ \norm u_{ L^4_t \dot{B}^\frac{3}{2}_{2,1}} \lesssim \norm u_{ L^\infty_t \dot{H}^1 \cap L^2 _t\dot{H}^2}$$
    produces the useful bound
\begin{equation}\label{E-AAA}
\begin{aligned}
\norm E_{L^2_t\dot{B}^\frac{5}{2}_{2,1} } &\lesssim c^{-1}  \norm {(E_0,B_0)}_{\dot{B}^\frac{5}{2}_{2,1}}  
 +  c^{-\frac{1}{2}}\norm{    u   }_{L  ^{\infty }_t\dot{H}^1  \cap L  ^{2 }_t\dot{H}^2 } \left( \norm{   B  }_{\widetilde{L}   ^{2 }_t\dot{B}^{\frac{5}{2}}_{2,1,>    } } +  \norm{   B }_{L^2  _t\dot{B}^{\frac{5}{2}}_{2,1  ,< }  } \right).
\end{aligned}
\end{equation}
The latter control of the electric field will come in handy, later on.
\end{rem}

\subsubsection{Persistence of regularity}

Observe that the case $q=\infty$ is missing in      the bounds from  Lemma \ref{low:freq:estimates}, above. Although it is possible to extend the results from Lemma \ref{low:freq:estimates} to cover that case (at the cost of more restrictive assumptions on the third summability index of Besov norms of the initial data), it is simpler to establish these missing bounds by utilizing   an elementary energy estimate  for   Maxwell's equations, which is the content of the next lemma.

 \begin{lem}\label{lemma.reg-persistence}Let  $T\in (0,\infty]$ and  $(E,B)$ be a smooth axisymmetric solution to \eqref{Maxwell:system:*} on $[0,T)$, for some divergence-free initial data $(E_0,B_0)$ and   vector field $u$. Assume further that $ E$ and $u$ are both   without swirl and that $B$ is a   vector field with pure swirl. 
 
  Then, for any  $s \in (-\frac{3}{2},  \frac{5}{2})$, $\epsilon>0 $ with $s+ \epsilon< \frac{5}{2}$ and for all $p,q\in [2,\infty]$ with $\frac{1}{2}= \frac{1}{p} + \frac{1}{q}$, it holds that
 \begin{equation*}
\norm { (E,B) }_{L^\infty_t \dot{H}^s}  +c \norm {  E}_{L^2_t  \dot{H}^s }  \lesssim      \norm { (E_0,B_0)}_{\dot{H}^s}   + \norm u_{ L^q_t\dot{H}^ {\frac{3}{2}- \epsilon} }\norm {B}_{L^p_t\dot{H}^{s+ \epsilon }  },
\end{equation*}
 on the time interval $[0,T)$.
Moreover, in the endpoint case $\epsilon=0$, we have that
 \begin{equation*}
\norm { (E,B) }_{L^\infty_t \dot{H}^s}  +c \norm {  E}_{L^2_t \dot{H}^s  }  \lesssim      \norm { (E_0,B_0)}_{\dot{H}^s}   + \norm u_{ L^q_t\dot{B}^{\frac{3}{2}}_{2,1}  }\norm {B}_{L^p_t\dot{H}^{s  }  } ,
\end{equation*}  
on the time interval $[0,T)$.
 \end{lem}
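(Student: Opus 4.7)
The plan is to perform a direct $\dot H^s$-energy estimate on the damped Maxwell system \eqref{Maxwell:system:*} and then feed it into the axisymmetric paradifferential product estimates from Lemma \ref{paradifferential:1}. First, I apply $|D|^s$ to both evolution equations and take the $L^2$-inner product of the electric equation with $|D|^s E$ and of the magnetic equation with $|D|^s B$. Since $|D|^s$ commutes with $\nabla\times$ and $(\nabla\times A, C)_{L^2} = (A, \nabla\times C)_{L^2}$, the two curl terms cancel after addition, leaving
\begin{equation*}
\frac{1}{2c}\frac{d}{dt}\|(E,B)\|_{\dot H^s}^2 + \sigma c\,\|E\|_{\dot H^s}^2 = -\sigma\bigl(|D|^s P(u\times B),\,|D|^s E\bigr)_{L^2}.
\end{equation*}
Multiplying by $2c$ and controlling the source by Young's inequality as
\begin{equation*}
2\sigma c\,\bigl|(|D|^sP(u\times B), |D|^s E)_{L^2}\bigr|\leq \sigma c^2\|E\|_{\dot H^s}^2 + \sigma\|P(u\times B)\|_{\dot H^s}^2,
\end{equation*}
I can absorb half of the cross-term into the dissipation and integrate in time to obtain the fundamental inequality
\begin{equation*}
\|(E,B)\|_{L^\infty_t\dot H^s}^2 + \sigma c^2\,\|E\|_{L^2_t\dot H^s}^2\lesssim \|(E_0,B_0)\|_{\dot H^s}^2 + \|P(u\times B)\|_{L^2_t\dot H^s}^2.
\end{equation*}
Extracting the square root reduces everything to a bound for $\|P(u\times B)\|_{L^2_t\dot H^s}$.

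At this stage the axisymmetric hypotheses enter in a decisive way. Since $u$ is divergence free and has no swirl, its curl $\nabla\times u$ has pure swirl, and so does $B$; hence both remain axisymmetric with pure swirl after convolution by any radial kernel and are therefore colinear, so that the geometric assumption \eqref{geometric:condition} of Lemma \ref{paradifferential:1} is satisfied with $F=u$, $G=B$. Applying the untilded version of \eqref{paraproduct:1} (which is allowed by the remark following Lemma \ref{paradifferential:1}) with the parameter choice $s_F=\tfrac 32-\epsilon$, $s_G=s+\epsilon$, followed by H\"older's inequality in time with $\tfrac 1p+\tfrac 1q=\tfrac 12$, yields
\begin{equation*}
\|P(u\times B)\|_{L^2_t\dot H^s}\lesssim \|u\|_{L^q_t\dot H^{\frac 32-\epsilon}}\|B\|_{L^p_t\dot H^{s+\epsilon}},
\end{equation*}
valid precisely in the range $s\in(-\tfrac 32,\tfrac 52)$, $\epsilon>0$, $s+\epsilon<\tfrac 52$ of the statement. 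Plugging this into the energy inequality gives the main bound of the lemma.

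For the endpoint $\epsilon=0$, the standard paraproduct estimate degenerates, and I will instead invoke the untilded version of \eqref{paraproduct:2} with $F=u$ and $G=B$, combined with the embedding $\dot B^{3/2}_{2,1}\hookrightarrow L^\infty\cap \dot B^{3/2}_{2,\infty}$, to produce
\begin{equation*}
\|P(u\times B)\|_{L^2_t\dot H^s}\lesssim \|u\|_{L^q_t\dot B^{\frac 32}_{2,1}}\|B\|_{L^p_t\dot H^s},
\end{equation*}
which closes the endpoint case by the same energy inequality. The only nontrivial step is the extension of the product estimate all the way up to (but not including) regularity $\tfrac 52$; this is exactly the improvement granted by Lemma \ref{paradifferential:1} over classical paraproduct laws, which would otherwise restrict $s$ to the range $s<\tfrac 32$. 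Everything else is a routine Littlewood--Paley energy computation followed by Young's inequality and H\"older in time.
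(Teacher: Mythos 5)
Your proposal is correct and follows essentially the same route as the paper: a weighted $L^2$ energy estimate on the damped Maxwell system (the paper performs it blockwise on $\Delta_j(E,B)$ and then sums, which is equivalent to your direct $|D|^s$ computation), Young's inequality to absorb the electric dissipation, and then the axisymmetric paraproduct estimates \eqref{paraproduct:1} and \eqref{paraproduct:2} of Lemma \ref{paradifferential:1} to bound $\|P(u\times B)\|_{L^2_t\dot H^s}$, with the geometric condition verified exactly as you do. No gaps.
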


\begin{proof}
We begin with localizing \eqref{Maxwell:system:*} in frequencies by applying $\Delta_j$, for $ j\in \mathbb{Z}$. Then, by an $L^2$ energy estimate, we find, for any $j\in \mathbb{Z}$, that
\begin{equation*}
\begin{aligned}
\frac{1}{2c}\norm {\Delta_j(E,B)(t)}_{L^2}^2 + \sigma c \norm {\Delta_j E}_{L^2_t L^2 }^2 &\leq    \frac{1}{2c}\norm {\Delta_j(E_0,B_0)}_{L^2}^2 + \sigma \norm {\Delta_j P(u\times B)}_{L^2_t L^2 } \norm {\Delta_j E}_{L^2_t L^2 }\\
& \leq   \frac{1}{2c}\norm {\Delta_j(E_0,B_0)}_{L^2}^2 + \frac{\sigma}{2c} \norm {\Delta_j P(u\times B)}_{L^2_t L^2 } ^2 \\
& \quad+ \frac{\sigma c}{2} \norm {\Delta_j E}_{L^2_t L^2 }^2.
\end{aligned}
\end{equation*} 
Therefore, it follows that
\begin{equation*}
\norm { (E,B)(t)}_{\dot{H}^s}^2 + \sigma c^2 \norm {  E}_{L^2_t \dot{H}^s }^2 \leq     \norm { (E_0,B_0)}_{\dot{H}^s}^2 + \sigma \norm {  P(u\times B)}_{L^2_t  \dot{H}^s }^2,
\end{equation*}
for any $s \in (-\frac{3}{2}, \frac{5}{2})$.

Finally, we conclude the proof    by employing  Lemma \ref{paradifferential:1} to control the source term in the right-hand side, above.
\end{proof}

\section{Asymptotic analysis of electromagnetic fields}
\label{section:asymptotics}

	In this section, we are going to make \eqref{Claim:01} and \eqref{Claim:02} precise. In particular, we are going to study the convergence
	$$\frac{1}{c}\partial_tE \to 0, $$
	as  $c\to \infty$.
	
	  For  simplicity, we will drop   $\sigma$ from the equations \eqref{Maxwell:system:*2} by fixing its value $\sigma=1$, and we emphasize that the analysis we perform here holds for any non-negative value of that parameter.

	\subsection{Asymptotic analysis of Amp\`ere's equation}
	
	From \eqref{Maxwell:system:*2}, observe, at least formally, that $E$ vanishes when $c$ goes to infinity. Moreover,  Amp\`ere's equation allows us to obtain
$$  j \rightarrow \nabla \times B, $$
as $ c \to  \infty$, in some suitable weak sense.

	 In the next lemma, we establish a more precise   description of the preceding convergence in adequate functional spaces. This step is crucial in the proof of Theorem \ref{Thm:1} and will come in handy in Section \ref{section:closing:ES}.
	  
	 	 \begin{prop}\label{decay-E}
	Let $T\in (0,\infty]$ and $(u,E,B)$ be smooth solution to the Navier--Stokes--Maxwell equations   \eqref{Maxwell:system:*2}, defined on $[0,T)$, where $u$ and $E$ have no swirl and $B$ has pure swirl.
	
	 Then, on the time interval $[0,T)$, for all $s\in [0, \frac{1}{2}]$, $p\in [2,\infty]$ and for any $c>0$, it holds that
	\begin{equation*}
	\norm {\frac{1}{c}\partial_t E }_{L^p_t \dot{H}^s}=\norm {j - \nabla \times B }_{L^p_t \dot{H}^s}\lesssim
	c^ {-\frac{2}{p}}\norm { \nabla \times B_0 - j_0 }_{\dot{H}^{s}}
	+c^ {-\frac{2}{p}}\norm { E_0 }_{\dot{H}^{s+1}}
	+   c^{-\left(\frac{2}{p}+1\right)} \mathcal{A}_{s} (u,E,B ),
	\end{equation*} 
	where
  
	 \begin{equation*}
	 \begin{aligned}
\mathcal{A}_s(u,E,B ) &\bydef  
    \Big(       \norm {   u  }_{ L^ \infty _t  \dot{H}^1    \cap L^ 2_t \dot{H} ^2  } +  \mathcal{E}_0 \norm {   B}_{L^\infty_t \dot{H}^ \frac{3}{2}  }       \Big)    \Big(   \norm {   B}_{L^\infty_t \dot{H}^{ s+ \frac{3}{2}} }   +   c \norm {E}_{L^2_t\dot{H}^{s+\frac{3}{2}}}\Big) \\
    & \quad +   \norm {   u  }_{ L^ \infty _t  L^2  }^{\frac{1}{2}-s} \norm {   u  }_{ L^ \infty _t  \dot{H} ^1  }^{s+ \frac{1}{2}}  \norm {   u  }_{   L^ 2_t \dot{H} ^{2 }  }     \norm {   B}_{L^\infty_t \dot{H }^{\frac{3}{2} }  } .
    \end{aligned}
	 \end{equation*}
	 \end{prop}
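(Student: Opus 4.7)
The identity $\tfrac{1}{c}\partial_t E = \nabla\times B - j$ is precisely Amp\`ere's equation, so only the quantitative bound requires an argument. Using Ohm's law $j = cE + P(u\times B)$ (with $\sigma=1$), I rewrite the quantity of interest as
\begin{equation*}
	W \bydef \tfrac{1}{c}\partial_t E = \nabla \times B - j = H - cE,
	\qquad
	H \bydef \nabla\times B - P(u\times B),
\end{equation*}
so that the task reduces to quantifying how close $cE$ is to $H$. Combining Amp\`ere with Ohm yields the damped scalar-type evolution $\partial_t E + c^2 E = cH$, and a direct computation gives the companion equation
\begin{equation*}
	\partial_t W + c^2 W = \partial_t H,
	\qquad W(0) = \nabla\times B_0 - j_0.
\end{equation*}
Duhamel's formula then provides
\begin{equation*}
	W(t) = e^{-c^2 t}W_0 + \int_0^t e^{-c^2(t-\tau)}\partial_\tau H(\tau)\,d\tau,
\end{equation*}
and taking the $L^p_t \dot H^s$ norm, the initial-data term is immediately bounded by $c^{-2/p}\|\nabla\times B_0 - j_0\|_{\dot H^s}$, matching the first contribution in the target bound.

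For the convolution term, I expand
\begin{equation*}
	\partial_t H = c\Delta E + cP(u\times \nabla\times E) - P(\partial_t u\times B),
\end{equation*}
using Faraday's law $\partial_t B = -c\nabla\times E$, and substitute $\partial_t u = \Delta u - P(u\cdot\nabla u) + P(j\times B)$ coming from the Navier--Stokes equation. The resulting product-type terms $P(u\times\nabla\times E)$, $P(\Delta u\times B)$, $P(P(u\cdot\nabla u)\times B)$ and $P(P(j\times B)\times B)$ are controlled by the refined paradifferential product law of Lemma \ref{paradifferential:1}, whose applicability in the required range of indices relies crucially on the axisymmetric orthogonality between $B$ (pure swirl) and $u, E$ (no swirl). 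Sobolev interpolation, together with the energy inequality \eqref{L^2-Energy}, then allows the resulting product norms to be absorbed into the factors that make up $\mathcal{A}_s$. A factor $c^{-2}=\|e^{-c^2\cdot}\|_{L^1_t}$ supplied by Young's convolution inequality, combined with the explicit $c$ arising from $cP(u\times\nabla\times E)$ and from $cE$ inside $j = cE + P(u\times B)$, produces the $c^{-(2/p+1)}$ prefactor in front of $\mathcal{A}_s$.

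The main obstacle is the $c\Delta E$ contribution to $\partial_t H$: a direct application of Young's inequality would yield $c^{-1}\|E\|_{L^p_t \dot H^{s+2}}$, which has half a derivative too many relative to the quantity $c\|E\|_{L^2_t \dot H^{s+3/2}}$ that appears in $\mathcal{A}_s$. The crucial identity, following from $\div E = 0$ together with Faraday's law, is
\begin{equation*}
	c\Delta E = -c\nabla\times(\nabla\times E) = \partial_\tau(\nabla\times B).
\end{equation*}
This allows a second integration by parts in the convolution $\int_0^t e^{-c^2(t-\tau)} c\Delta E\,d\tau$, producing a boundary term $-e^{-c^2 t}\nabla\times B_0$ together with a smoothed damped convolution of $\nabla \times B$ that loses only one derivative on $B$. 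Combined with a high/low frequency splitting $E_0 = E_{0,<c} + E_{0,\geq c}$, in which the high-frequency part satisfies $c\|E_{0,\geq c}\|_{\dot H^s}\leq \|E_{0,\geq c}\|_{\dot H^{s+1}}$ trivially while the low-frequency part is dominated through the parabolic Maxwell estimate of Lemma \ref{cor:parabolic:maxwell}, this yields exactly the missing $c^{-2/p}\|E_0\|_{\dot H^{s+1}}$ contribution. Assembling all pieces, and bounding the resulting convolution and paraproduct norms against the quantities $\|u\|_{L^\infty_t \dot H^1 \cap L^2_t \dot H^2}$, $\|B\|_{L^\infty_t \dot H^{s+3/2}}$, $c\|E\|_{L^2_t \dot H^{s+3/2}}$ and the lower-order interpolation term in the definition of $\mathcal{A}_s$, completes the proof.
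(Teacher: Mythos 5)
Your reduction to the damped ODE $\partial_t W+c^2W=\partial_tH$ with $W=\tfrac1c\partial_tE$ and $H=\nabla\times B-P(u\times B)$ is a genuinely different route from the paper, which instead differentiates Amp\`ere's law to get the damped wave equation $\tfrac1c\partial_{tt}E-c\Delta E+c\partial_tE=-\partial_tP(u\times B)$ and runs a frequency-localized energy estimate on it. Your decomposition of $-\partial_tP(u\times B)$ into the four products $P(P(u\cdot\nabla u)\times B)$, $P(\Delta u\times B)$, $P(P(j\times B)\times B)$ and $cP(u\times\nabla\times E)$, estimated via Lemma \ref{paradifferential:1} and interpolation, coincides with the paper's treatment of $\mathcal I_1,\dots,\mathcal I_4$ and is fine. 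The whole difficulty of the proposition is concentrated in the term you single out, $c\Delta E$, and this is precisely where the two methods diverge: in the energy method $-c\Delta E$ is conservative (it pairs with $\partial_tE$ to produce $\tfrac{d}{dt}\tfrac{1}{c^2}\|\nabla E\|_{L^2}^2$ inside the energy functional), so it only ever contributes the initial-data term $c^{-2/p}\|E_0\|_{\dot H^{s+1}}$; in your Duhamel formula it sits on the right-hand side as a forcing term and must be beaten down by hand.

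Your proposed fix does not close. Writing $c\Delta E=\partial_\tau(\nabla\times B)$ and integrating by parts in $\int_0^te^{-c^2(t-\tau)}\partial_\tau(\nabla\times B)\,d\tau$ produces \emph{two} boundary terms, and you only record the harmless one at $\tau=0$. The other is $\nabla\times B(t)$ evaluated at the current time, with coefficient $1$, and it comes paired with the convolution $-c^2\int_0^te^{-c^2(t-\tau)}\nabla\times B(\tau)\,d\tau$. Individually these are far too large: the boundary term is $O(1)$ in $c$ (at best $c^{-2/p}\|B\|_{L^\infty_t\dot H^{s+1}}$ after regrouping, which is a solution norm with a prefactor short by a full factor of $c$ relative to $c^{-(2/p+1)}\mathcal A_s$), while Young's inequality gives the convolution a size $c^{1-2/p}\|B\|_{L^2_t\dot H^{s+1}}$, which \emph{grows} relative to the target. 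Their difference $\nabla\times B(t)-c^2\int_0^te^{-c^2(t-\tau)}\nabla\times B(\tau)\,d\tau$ is small only insofar as one controls the modulus of continuity of $\tau\mapsto\nabla\times B(\tau)$, i.e.\ $\partial_\tau\nabla\times B=c\Delta E$ itself, so the argument is circular. (A smaller bookkeeping issue: to get the $c^{-(2/p+1)}$ prefactor against an $L^2_t$ norm of the source you need Young with $\|e^{-c^2\cdot}\|_{L^r_t}\sim c^{-1-2/p}$, $\tfrac1r=\tfrac12+\tfrac1p$, not $\|e^{-c^2\cdot}\|_{L^1_t}=c^{-2}$; that part is repairable.) The clean way out is the paper's: keep $c\Delta E$ on the left as part of the energy, or equivalently run Duhamel on the full first-order system for $(\tfrac1c\partial_tE,\nabla E)$ rather than on $W$ alone.
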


	\begin{proof} 
The proof relies on a key idea from \cite[Proposition 3.3]{ah2}, which we adapt to the three-dimensional setting. 

We begin by       applying a time derivative  to Amp\`ere's equation to obtain the following damped wave equation for $ E$
\begin{equation*}
\frac{1}{c } \partial_{tt}E - c \Delta E+ c \, \partial_t E = -   \partial_t P (u\times B),
\end{equation*}
where we have   used Faraday's equation and Ohm's law, as well.

Then, we localize in frequencies by applying $\Delta_j$, for $j\in \mathbb{Z}$, and we perform an  $L^2$  energy estimate followed by H\"older's inequality to find, for all $t\in [0,T)$, that
\begin{equation*}
\begin{aligned}
\frac{1}{2} \frac{d}{dt }\Bigg( \frac{1}{c^4} \norm {\Delta_j \partial_t E (t) }_{L^2}^2  &+\frac{1}{c^2} \norm {\Delta_j \nabla  E(t) }_{L^2}^2 \Bigg)   + \frac{1}{c^2}\norm {\Delta_j \partial_t E(t)}_{L^2}^2 \\
& \qquad\qquad \leq \frac{1}{c^3} \left| \int_{\mathbb{R}^3}   \Delta_j \partial_t P (u\times B) \Delta_j \partial_t E(t,x) dx \right|\\
& \qquad \qquad \leq \frac{1}{c^3} \norm{   \Delta_j \partial_t P (u\times B)(t)}_{L^2} \norm{ \Delta_j \partial_t E(t)  }_{L^2}\\
&  \qquad \qquad\leq  \frac{1}{2 c^4} \norm{   \Delta_j \partial_t P (u\times B)(t)}_{L^2} ^2 + \frac{1}{2c^2} \norm{ \Delta_j \partial_t E(t)  }_{L^2}^2.
\end{aligned}
\end{equation*}

Therefore, we find,  for any $s\in \mathbb{R}$, that
\begin{equation*}
 \frac{d}{dt }\left( \frac{1}{c^4} \norm {  \partial_t E(t) }_{ \dot{H}^s}^2  +\frac{1}{c^2} \norm {    E (t) }_{\dot{H}^{s+ 1}}^2 \right) + \frac{1}{c^2}\norm {  \partial_t E(t)}_{\dot{H}^s}^2  \leq   \frac{1}{  c^4} \norm{   \partial_t P (u\times B)(t)}_{\dot{H}^s} ^2  .
\end{equation*}
Consequently, employing H\"older's inequality followed by Young's inequality for products, we obtain that
\begin{equation}\label{Energy***}
	\begin{aligned}
	  	\frac{1}{c^{2-\frac 2p}}\norm {  \partial_t E }_{L^p_t\dot{H}^s}
		&\lesssim
		\frac{1}{c^2}\norm {  \partial_t E }_{L^\infty_t\dot{H}^s}
	  	+
	    \frac{1}{c }\norm {  \partial_t E }_{L^2_t\dot{H}^s}
		\\
		&\lesssim   \frac{1}{c} \Big( \norm {\nabla \times B_0 - j_0}_{\dot{H}^s} + \norm {E_0}_{\dot{H}^{s+1}}  \Big) +  \frac{1}{  c^2} \norm{   \partial_t P (u\times B) }_{L^2_t\dot{H}^s}   ,
	\end{aligned}
\end{equation}
for every $p\in [2,\infty]$, where we have used Amp\`ere's equation   
 to express the initial data of $   \frac{1}{c }\partial_t E$ in terms of $B_0$ and $j_0$.
 
Hence, we are now left to control $ \partial_t P (u\times B)$. To this end, we shall first   transform  the time derivative into spatial derivatives by using the momentum and Faraday's equations from \eqref{Maxwell:system:*2}. Accordingly,  we obtain that
$$\partial_t P (u\times B)  =    P \left(\partial_t u  \times B  \right)+P \left( u  \times \partial_t B  \right) = - \sum_{i=1}^4 \mathcal{I}_i,$$
where
\begin{equation*}
	\begin{aligned}
		\mathcal{I}_1  &\bydef   P \Big(P\left( u \cdot \nabla u  \right) \times B  \Big),
		&
		\mathcal{I}_2 &\bydef  -P \Big(\Delta u  \times B  \Big),
		\\
		\mathcal{I}_3  &\bydef  -P \Big(P\left( j \times B \right) \times B  \Big) ,
		&
		\mathcal{I}_4  &\bydef   cP \Big(u   \times  (\nabla \times E )\Big).
	\end{aligned}
\end{equation*} 
We are now going to utilize \eqref{paraproduct:1} from  Lemma \ref{paradifferential:1} to estimate each term separately.

For $\mathcal{I}_1 $, we find that
\begin{equation*}
\begin{aligned}
  \norm {\mathcal{I}_1 }_{L^2_t   \dot{H}^{s} }   
   &\lesssim   \norm {u \cdot \nabla u  }_{ L^ 2 _t \dot{H}^ {s} }   \norm {  B}_{L^\infty_t \dot{H}^{ \frac{3}{2} }}\\
   &\lesssim    \norm {u  }_{ L^ \infty _t \dot{H}^{s+\frac{1}{2}} }  \norm { \nabla u  }_{ L^ 2_t \dot{H}^1  }\norm {   B}_{L^\infty_t \dot{H}^{ \frac{3}{2} }}.
\end{aligned}
\end{equation*}
Therefore, as long as $ s\in [0,\frac{1}{2}] $, we obtain, by interpolation, that
 \begin{equation*}
\begin{aligned}
  \norm {\mathcal{I}_1 }_{L^2_t   \dot{H}^{s} }   
 &\lesssim     \norm {   u  }_{ L^ \infty _t  L^2  }^{\frac{1}{2}-s} \norm {   u  }_{ L^ \infty _t  \dot{H} ^1  }^{s+ \frac{1}{2}}  \norm {   u  }_{   L^ 2_t \dot{H} ^{2 }  }     \norm {   B}_{L^\infty_t \dot{H }^{\frac{3}{2} }  }.
   \end{aligned}
\end{equation*}
As for $\mathcal{I}_2$, we  employ  Lemma \ref{paradifferential:1}, again, to find, for any $s\in [0,\frac{1}{2}]$, that
 \begin{equation*}
\begin{aligned}
  \norm {\mathcal{I}_2 }_{L^2_t   \dot{H}^{s} }   
   &\lesssim   \norm {\Delta u  }_{ L^ 2_t L^2 }  \norm {   B}_{L^\infty_t \dot{H}^{s+\frac{3}{2}} }\\
   &\lesssim     \norm {   u  }_{  L^ 2_t \dot{H}^2  }  \norm {   B}_{L^\infty_t \dot{H}^{s+\frac{3}{2}} }.
\end{aligned}
\end{equation*}
In order to estimate  $\mathcal{I}_3$, we    utilize Lemma \ref{paradifferential:1} twice to obtain, for any $s\in [0,\frac{1}{2}]$, that
 \begin{equation*}
\begin{aligned}
  \norm {\mathcal{I}_3 }_{L^2_t   \dot{H}^{s} }   
   &\lesssim   \norm {P \big(j      \times  B \big)  }_{ L^ 2_t L^2 }  \norm { B}_{L^\infty_t \dot{H}^{s+ \frac{3}{2}} }\\ 
    &\lesssim   \norm { j }_{ L^ 2_t L^2 }\norm{   B  }_{ L^ \infty _t \dot{H}^{\frac{3}{2} }  }  \norm {   B}_{L^\infty_t \dot{H}^{s+ \frac{3}{2}}}\\
   &\lesssim    \mathcal{E}_0  \norm {   B}_{L^\infty_t \dot{H}^\frac{3}{2}  }  \norm {   B}_{L^\infty_t \dot{H}^{s+ \frac{3}{2}}}  ,
\end{aligned}
\end{equation*}
where we have used the energy estimate \eqref{L^2-Energy} in the last inequality.
Finally, the control of the last term $\mathcal{I}_4$  is achieved by classical product laws, which give that
 \begin{equation*}
\begin{aligned}
  \norm {\mathcal{I}_4 }_{L^2_t   \dot{H}^{s} }    \lesssim    \norm  u_{L^\infty_t\dot{H}^1} \norm {cE}_{L^2_t\dot{H}^{s+\frac{3}{2}}} .
\end{aligned}
\end{equation*}

All in all, gathering the previous bounds yields   
\begin{equation}\label{u-times-B}
\norm{\partial_t  (u \times B)}_{ L^2_t\dot{H}^s} \lesssim \mathcal{A}_s(u,E,B),
\end{equation}
where $\mathcal{A}_s(u,E,B)$  is defined in the statement of the proposition, above.
The proof is then concluded by incorporating \eqref{u-times-B} into \eqref{Energy***}.
	\end{proof}

	\subsection{Almost-parabolic estimates on  the   magnetic field} \label{section:AGE}

	When considering the limiting   system \eqref{MHD*}, one can show, with standard energy estimates, that the quantities $ B$ and $\Gamma = \frac{B_\theta}{r}$ are globally controlled in $L^2_t\dot{H}^1$ and $L^\infty_t L^p \cap  L^2_t\dot{H}^1$, for all $p\in [1,\infty]$, respectively.
	
	Here, we establish an asymptotic version of these bounds  for \eqref{Maxwell:system:*2}, thereby  justifying \eqref{Claim:02}. 
	The key observation in the proof below consists in considering   the term $ \frac{1}{c^2} \partial_{tt} \Gamma $ in the  equation 
	 \begin{equation}\label{Gamma-equa***}
  \frac{1}{c^2} \partial_{tt} \Gamma + \partial_t \Gamma + u\cdot\nabla \Gamma -\big(\Delta+ \frac{\partial_r}{r} \big) \Gamma=0
  \end{equation} 
  as an error, for large values of $c$.
  Accordingly, one should treat \eqref{Gamma-equa***} as a parabolic equation with a vanishing source term, as $c\to \infty$. A more precise statement of that is given in the next proposition

 \begin{prop}\label{prop:almost-parabolic-ES}  Let $T\in \mathbb{R}^+\cup\{\infty\}$ and $(u,E,B) $  be a  smooth axisymmetric solution to \eqref{Maxwell:system:*2}, defined on $[0,T)$, where $u$ and $E$ have no swirl and $B$ has pure swirl.
 
  Then, it holds that
\begin{equation}\label{B-almost.GE}
\norm{ B}_{L^2_t\dot{H}^1 }\leq \mathcal{E}_0 + \norm{\frac{1}{c}\partial_t E}_{L^2_tL^2 }.
\end{equation}
Moreover,    $\Gamma  $ enjoys the bounds
\begin{equation}\label{Gamma-almost.GE-1}
\norm{ \Gamma}_{L^2_t\dot{H}^1 }\lesssim  \norm{\Gamma_0}_{L^2}    +c^{-1} \norm {  E_0 }_{\dot{H}^2}   +  c^{-1} \left(     \norm {B}_{L^\infty _t\dot{H}^{2  }   } + c\norm { E}_{L^2_t\dot{H}^2 } \right),
  \end{equation}
and, for any $p\in [2,\infty)$,
\begin{equation}\label{Gamma-almost.GE-2}
\norm{ \Gamma}_{L^\infty_tL^p }\lesssim  \norm { \Gamma_0}_{ L^p   }     +c^{-1} \norm {E_0}_{\dot{W}^{2,p}}   +  \norm { E}_{L^2_t\dot{W}^{2,p} },
\end{equation}
where all the time norms above are taken over the whole interval $[0,T).$
\end{prop}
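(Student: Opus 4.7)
\smallskip

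\noindent\textbf{Proof plan for Proposition \ref{prop:almost-parabolic-ES}.}

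For \eqref{B-almost.GE}, the plan is to exploit Amp\`ere's equation directly. Rewriting it as $\nabla\times B=\tfrac{1}{c}\partial_t E+j$ and using that $\div B=0$ (so that $\norm{\nabla B}_{L^2}=\norm{\nabla\times B}_{L^2}$) immediately gives
$$\norm{B}_{L^2_t\dot H^1}\leq\norm{j}_{L^2_tL^2}+\norm{\tfrac{1}{c}\partial_t E}_{L^2_tL^2},$$
and one concludes with the global energy bound $\norm{j}_{L^2_tL^2}\leq \mathcal{E}_0$ from \eqref{L^2-Energy}.

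For \eqref{Gamma-almost.GE-1} and \eqref{Gamma-almost.GE-2}, I would base the argument on \eqref{Gamma-equa***}, namely
$$\tfrac{1}{c^2}\partial_{tt}\Gamma+\partial_t\Gamma+u\cdot\nabla\Gamma-(\Delta+\tfrac{\partial_r}{r})\Gamma=0,$$
and perform an $L^p$-energy estimate by testing against $|\Gamma|^{p-2}\Gamma$ (with $p=2$ for \eqref{Gamma-almost.GE-1} and general $p$ for \eqref{Gamma-almost.GE-2}). The transport term vanishes because $\div u=0$. The axisymmetric diffusive operator, after integration by parts in cylindrical coordinates, yields the favorable contribution
$$-\int(\Delta+\tfrac{\partial_r}{r})\Gamma\cdot|\Gamma|^{p-2}\Gamma\,dx=\tfrac{4(p-1)}{p^2}\norm{\nabla|\Gamma|^{p/2}}_{L^2}^2+\tfrac{2\pi}{p}\int|\Gamma(r{=}0,z)|^p\,dz,$$
both with the right sign. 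The core of the argument is to treat the wave term $\tfrac{1}{c^2}\partial_{tt}\Gamma$ as an asymptotically vanishing error, via the identity $\tfrac{1}{c}\partial_t\Gamma=-K$ with $K\bydef(\nabla\times E)_\theta/r$ obtained from Faraday's equation together with the axisymmetric structure. This gives $\tfrac{1}{c^2}\partial_{tt}\Gamma=-\tfrac{1}{c}\partial_tK$, and after integrating by parts in time on $[0,T]$,
$$\int_0^T\!\!\int\tfrac{1}{c^2}\partial_{tt}\Gamma\cdot|\Gamma|^{p-2}\Gamma\,dx\,dt=-\tfrac{1}{c}\int K|\Gamma|^{p-2}\Gamma\,dx\bigg|_0^T-(p-1)\int_0^T\!\!\int K^2|\Gamma|^{p-2}\,dx\,dt,$$
where the last computation uses once more $\partial_t\Gamma=-cK$ to evaluate $\partial_t(|\Gamma|^{p-2}\Gamma)$. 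The Hardy-type inequality $\norm{K}_{L^p}\lesssim\norm{E}_{\dot W^{2,p}}$ from Lemma~\ref{lemma-f/x:2} then converts every occurrence of $K$ into a norm of $E$, and Hölder followed by Young's inequality (with suitable absorption of the positive terms on the left) produces the stated bounds after taking a supremum in $T$.

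The main obstacle, and the step requiring the most care, is the time-boundary term $\tfrac{1}{c}\int K(T)|\Gamma(T)|^{p-2}\Gamma(T)\,dx$ arising from the integration by parts. A naive Hölder estimate produces $\tfrac{1}{c}\norm{E(T)}_{\dot W^{2,p}}\norm{\Gamma(T)}_{L^p}^{p-1}$, which contains an unwanted pointwise-in-time norm of $E$. In the $L^2_t\dot H^1$ case underlying \eqref{Gamma-almost.GE-1}, this term must be recast using Amp\`ere's equation $\nabla\times B-j=\tfrac 1c\partial_t E$ (which, combined with Faraday, relates $K(T)$ to derivatives of $B$ rather than $E$) so as to yield the contribution $c^{-1}\norm{B}_{L^\infty_t\dot H^2}$ that appears in the statement. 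In the $L^\infty_tL^p$ case underlying \eqref{Gamma-almost.GE-2}, one structures Young's inequality so that the pointwise-in-time dependence of $K$ is absorbed into the left-hand side $\norm\Gamma_{L^\infty_tL^p}^p$, leaving only $\norm{\Gamma_0}_{L^p}$, $c^{-1}\norm{E_0}_{\dot W^{2,p}}$ coming from the $t=0$ boundary, and $\norm{K}_{L^2_tL^p}\lesssim\norm{E}_{L^2_t\dot W^{2,p}}$ from the quadratic remainder. This juggling of inequalities, together with the identification of $K$ with a time derivative of $\Gamma$, is where the parabolic--hyperbolic tradeoff takes place, and is the technical heart of the proof.
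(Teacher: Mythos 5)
Your treatment of \eqref{B-almost.GE} and your setup for the $\Gamma$ estimates --- testing \eqref{Gamma-equa***} against $|\Gamma|^{p-2}\Gamma$, the identities for the axisymmetric diffusion operator, the identification $\frac 1c\partial_t\Gamma=-(\nabla\times E)_\theta/r$ via Faraday's equation, and the integration by parts in time producing the boundary terms together with the quadratic remainder $(p-1)\int_0^T\!\!\int K^2|\Gamma|^{p-2}$ --- all coincide with the paper's proof. You have also correctly isolated the crux of the matter: the boundary term $\frac 1c\int K(T)|\Gamma(T)|^{p-2}\Gamma(T)\,dx$, i.e.\ $-\frac{1}{c^2}Y_p'(T)$ in the notation $Y_p(t)=\frac 1p\norm{\Gamma(t)}_{L^p}^p$.

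However, your proposed resolution of that term does not work as described, and this is a genuine gap. Amp\`ere's equation relates $\partial_t E$ (not $E$ or $\nabla\times E$) to $B$, and Faraday's equation only returns you to $K=-\frac 1c\partial_t\Gamma$, so there is no mechanism to ``recast $K(T)$ in terms of derivatives of $B$''; any direct H\"older--Young estimate of this term leaves behind a pointwise-in-time norm of the electric field, namely $c^{-1}\norm{E}_{L^\infty_t\dot H^2}$ for \eqref{Gamma-almost.GE-1} or $c^{-1}\norm{E}_{L^\infty_t\dot W^{2,p}}$ for \eqref{Gamma-almost.GE-2}, and neither appears in the statement (only $E_0$ enters pointwise; $E$ itself only enters through an $L^2_t$ norm). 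The paper's missing ingredient is to exploit the damped-ODE structure: the tested equation reads $\frac{1}{c^2}Y_p''+Y_p'+X_p=\frac{1}{c^2}A_p$, and multiplying by the integrating factor $e^{c^2t}$ and integrating yields
\begin{equation*}
-\frac{1}{c^2}Y_p'(t)=-\frac{1}{c^2}Y_p'(0)e^{-c^2t}+\int_0^te^{-c^2(t-\tau)}\Bigl(X_p(\tau)-\frac{1}{c^2}A_p(\tau)\Bigr)d\tau.
\end{equation*}
The $A_p$ piece is sign-definite and can be discarded; the kernel $e^{-c^2(t-\tau)}$ has total mass at most $c^{-2}$, so for $p=2$ the $X_p$ piece is bounded by $c^{-2}\norm{\Gamma}_{L^\infty_t\dot H^1}^2\lesssim c^{-2}\norm{B}_{L^\infty_t\dot H^2}^2$ --- this, and not the final-time boundary term, is where the contribution $c^{-1}\norm{B}_{L^\infty_t\dot H^2}$ in \eqref{Gamma-almost.GE-1} actually comes from --- while for the $L^\infty_tL^p$ bound it is simply dominated by $\int_0^tX_p(\tau)\,d\tau$ and absorbed into the left-hand side. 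Only the $t=0$ boundary term survives, which is precisely why the statement features $c^{-1}\norm{E_0}_{\dot W^{2,p}}$ and no pointwise-in-time norm of $E$. Without this step your argument proves a strictly weaker estimate than the one claimed.
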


\begin{rem}
	Due to Lemma \ref{lemma.reg-persistence} and Proposition \ref{decay-E}, observe that  the last   terms   in the right-hand side of \eqref{B-almost.GE} and  \eqref{Gamma-almost.GE-1}    can be seen as   errors for large values of $c$.

 Likewise,    by virtue of    \eqref{E-AAA}    and  the embedding
$$ \dot{B}^\frac{5}{2}_{2,1} \hookrightarrow   \dot{W}^{2,3}(\mathbb{R}^3),$$
note that \eqref{Gamma-almost.GE-2} implies, for $p=3$,   that 
\begin{equation*} 
		\norm{ \Gamma}_{L^\infty_tL^3 }
		\lesssim  \norm { \Gamma_0}_{ L^3   }
		+ c^{-1}  \norm {(E_0,B_0)}_{\dot{B}^\frac{5}{2}_{2,1}}  
		+  c^{-\frac{1}{2}}\norm{    u   }_{L  ^{\infty }_t\dot{H}^1  \cap L  ^{2 }_t\dot{H}^2 } \left( \norm{   B  }_{\widetilde{L}   ^{2 }_t\dot{B}^{\frac{5}{2}}_{2,1,>    } } +  \norm{   B }_{L^2  _t\dot{B}^{\frac{5}{2}}_{2,1  ,< }  } \right).
\end{equation*}
This bound will come in handy, later on.
\end{rem}

\begin{proof}
	It is readily seen that, by virtue of   the energy inequality \eqref{L^2-Energy},   Amp\`ere's equation from \eqref{Maxwell:system:*2} entails that
\begin{equation*}
	\norm{ B}_{L^2_t \dot{H}^1 } \leq \norm {j}_{L^2_t L^2 } +  \norm{\frac{1}{c}\partial_t E}_{L^2_t L^2 }
	\leq   \mathcal{E}_0 + \norm{\frac{1}{c}\partial_t E}_{L^2_t L^2 },
\end{equation*} 
thereby establishing \eqref{B-almost.GE}.

We now focus   on the  proof of \eqref{Gamma-almost.GE-1}  and   \eqref{Gamma-almost.GE-2}. To that end,    multiplying    \eqref{Gamma-equa***}     by $\Gamma|\Gamma|^{p-2}$,     integrating with respect to   space variables and   using the identities
\begin{equation*}
	- \int_{\mathbb{R}^3}(\Delta \Gamma) \Gamma|\Gamma|^{p-2}dx
	-\int_{\mathbb{R}^3} \frac{\partial_r\Gamma}{r} \Gamma|\Gamma|^{p-2}dx  =\underbrace{ \frac{4(p-1)}{p^2} \norm {\nabla (|\Gamma(t)|^{\frac{p}{2}})}_{L^2}^2  
+  \frac{2\pi}{p} \int_\mathbb{R}|\Gamma(t,r=0,z)|^p dz}_{\bydef X_p(t)}
\end{equation*}
and
\begin{equation*}
\int_{\mathbb{R}^3}(\partial_{tt} \Gamma) \Gamma|\Gamma|^{p-2} dx=\partial_{tt} \Big(\underbrace{ \frac{ 1}{p}  \norm {  \Gamma (t)}_{L^p}^p}_{\bydef  Y _p(t)} \Big)- \underbrace{ (p-1) \int_{\mathbb{R}^3}|\partial_t \Gamma|^2 |\Gamma|^{p-2} dx}_{\bydef    A_p(t)} ,
\end{equation*} 
yields, for all $t\in [0,T)$, that
\begin{equation}\label{Y''-equa}
\frac{1}{c^2}Y''_p(t) + Y'_p(t) + X_p(t) = \frac{1}{c^2}A_p(t).
\end{equation}
Therefore, integrating in time, we obtain that  
\begin{equation}\label{Y-equa}
Y_p(t) + \int_0^t X_p(\tau) d\tau = Y_p(0) + \frac{1}{c^2} Y_p'(0) - \frac{1}{c^2}Y_p'(t) + \frac{1}{c^2}\int_0^t A_p(\tau)d\tau.
\end{equation}

Now, we need  to take care of the term $ - \frac{1}{c^2}Y_p'(t)$, above. To that end, observing that \eqref{Y''-equa} can be rewritten as 
\begin{equation*}
\frac{1}{c^2}\frac{d}{dt}\Big( Y_p'(t)e^{c^2t}\Big)=  \Big(\frac{1}{c^2}A_p(t)-X_p(t), \Big)e^{c^2t},
\end{equation*}
which we then integrate with respect to the time variable, we find that
\begin{equation*}
-\frac{1}{c^2} Y'_p(t) = -\frac 1{c^2}Y_p'(0) e^{-c^2t} + \int_0^t e^{-c^2(t-\tau)} \Big(X_p(\tau)-\frac{1}{c^2}A_p(\tau) \Big) d\tau.
\end{equation*}
Hence, plugging the latter identity into \eqref{Y-equa} yields that 
\begin{equation}\label{Y-equa:2}
\begin{aligned}
Y_p(t) + \int_0^t X_p(\tau) d\tau &=  Y_p(0) + \frac{1-e^{-c^2t}}{c^2} Y_p'(0)   +  \int_0^t e^{-c^2(t-\tau)}  X_p(\tau)  d\tau \\
&  \quad+\frac{1}{c^2} \int_0^t A_p(\tau)d\tau  - \frac{1}{c^2}\int_0^t e^{-c^2(t-\tau)}A_p(\tau)  d\tau.
\end{aligned}
\end{equation}

Next, utilizing Faraday's equation from \eqref{Maxwell:system:*2}, we write, for any $t\in (0,T)$, that
\begin{equation}\label{dt-Gamma}
 \left(\partial_t\Gamma \right)  = \left(   \frac{\partial_tB }{r} \right)  \cdot e_\theta
 = -c \left(   \frac{ \nabla \times E }{r} \right)  \cdot e_\theta,
\end{equation} 
whereby
$$ \left(\partial_t\Gamma \right)|_{t=0} = -c \left(   \frac{ \nabla \times E_0 }{r} \right)  \cdot e_\theta.  $$  
 Accordingly, it follows that 
\begin{equation*}
	\begin{aligned}
		\frac{1}{c^2} |Y'_p(0)|=\frac{1}{c^2}\left| \Big( \int_{\mathbb{R}^3} (\partial_t \Gamma)\Gamma |\Gamma|^{p-2}dx\Big)_{|_{t=0}} \right|
		&\leq\frac{1}{c } \int_{\mathbb{R}^3} \frac{|\nabla \times E_0|}{r} |\Gamma_0|^{p-1}dx
		\\
         & \quad \leq  \frac{1}{c} \norm {\frac{ \nabla \times E_0}{r}}_{L^p}\norm {\Gamma_0}_{L^{p}}^{p-1}.
	\end{aligned}
\end{equation*} 
Thus, recalling the inequality
\begin{equation}\label{conv:inequa}
ab\leq \varepsilon^\alpha \frac{a^\alpha}{\alpha} + \varepsilon^{-\alpha'} \frac{b^{\alpha'}}{\alpha'}, 
\end{equation}
for any $\varepsilon>0$ and $\alpha \in (1,\infty)$,
 where   $\alpha^{\prime}$ denotes the conjugate of $\alpha $,
   and  by virtue of Lemma \ref{lemma-f/x:2}, we end up with  
   \begin{equation}\label{Y'-bound}
   \begin{aligned}
    \frac{1}{c^2}  |Y'_p(0)|  & \leq      \left(   \left(  \frac{1}{c } \norm {\frac{ \nabla \times E_0}{r}}_{L^p}\right)^{\frac{1}{p}} \norm {\Gamma_0}_{L^{p}}^{\frac{1}{p^\prime} }   \right)^ p \\
   & \leq       \left(  \frac{1}{pc}    \norm {\frac{ \nabla \times E_0}{r}}_{L^p} + \frac{1}{p^{\prime}} \norm {\Gamma_0}_{L^{p}}    \right)^ p\\
 & \lesssim      \Big(  \frac{1}{c} \norm { E_0}_{\dot{W}^{2,p}} + \norm {\Gamma_0}_{L^{p}}   \Big)^ p. 
   \end{aligned}
   \end{equation} 
Consequently, for $p=2$, we find that \eqref{Y-equa:2} entails that
\begin{equation*}
\norm {\Gamma(t)}_{  L^2 }^2   \lesssim    \norm{\Gamma_0}_{L^2}^2 +    c^{-2} \left(   \norm { E_0}_{\dot{H}^2}  +     \norm {\partial_t \Gamma}_{L^2_t L^2 } \right)^2
\end{equation*}
and
\begin{equation*}
  \norm \Gamma_{L^2_t \dot{H}^1 }^2 \lesssim  \norm{\Gamma_0}_{L^2}^2 +    c^{-2} \left(   \norm { E_0}_{\dot{H}^2}  +   \norm \Gamma_{L^\infty _t  \dot{H}^1 }  + \norm {\partial_t \Gamma}_{L^2_t L^2 } \right)^2.
\end{equation*}
Finally,   employing \eqref{dt-Gamma} 
and  Lemma \ref{lemma-f/x:2} in the last bound leads to \eqref{Gamma-almost.GE-1}, and \eqref{Gamma-almost.GE-2} in the case $p=2$.

Then, in order to deal with the range $p\in (2,\infty)$,
 we  deduce from \eqref{Y-equa:2} that
  \begin{equation*}
\norm { \Gamma}_{L^\infty_t L^p }^p
\lesssim  \norm { \Gamma_0}_{L^\infty_t(L^p)}^p + \frac{1}{c^2} |Y'_p(0)| + \frac{1}{c^2}  \int_0^t \norm {\partial_t\Gamma(\tau)}_{L^p}^2 \norm {\Gamma(\tau )}_{L^p}^{p-2}d\tau .
   \end{equation*} 
   Therefore, employing \eqref{Y'-bound} to control $Y'_p(0) $ yields 
    \begin{equation*}
	   \norm { \Gamma}_{L^\infty_t L^p }^p
	   \lesssim   \norm { \Gamma_0}_{L^\infty_t L^p }^p
	   +    \Big(  \frac{1}{ c} \norm { E_0}_{\dot{W}^{2,p}} + \norm {\Gamma_0}_{L^{p}}   \Big)^ p    + \frac{1}{c^2}    \norm {\partial_t\Gamma }_{L^2_t L^p }^2 \norm {\Gamma }_{L^\infty_t L^p }^{p-2}.
   \end{equation*} 
After that, we utilize \eqref{conv:inequa}
 to obtain that
   \begin{equation*}
\norm { \Gamma}_{L^\infty_t L^p }^p \lesssim  \norm { \Gamma_0}_{L^\infty_t L^p }^p + \Big(  \frac{1}{ c} \norm { E_0}_{\dot{W}^{2,p}} +  \norm {\Gamma_0}_{L^{p}}   \Big)^ p
+  \frac{1}{c^p}     \norm {\partial_t\Gamma }_{L^2_t L^p }^p.
   \end{equation*} 
At last,     by using \eqref{dt-Gamma}   and Lemma \ref{lemma-f/x:2}, again, we arrive at the desired bound \eqref{Gamma-almost.GE-2}.  This completes the proof of the proposition.
\end{proof}

	\section{Closing the estimates and proof  of Theorem \ref{Thm:1}}

	\label{section:closing:ES}

	 We are now    ready to establish the final global bounds which   lead  to  the existence of global solutions of the Navier--Stokes--Maxwell equations \eqref{Maxwell:system:*2}. This is going to be done, first, by gathering all the estimates from the previous sections to produce a nonlinear energy estimate. Then, under adequate assumptions in the regime $c\to \infty$, the nonlinear energy bound will allow us to deduce the desired global control, uniformly with respect to the speed of light.
	 
	  Throughout this section, for the sake of simplicity and clarity,    the subscript ``$t$'' that appears in     time-Lebesgue spaces $L^p_t$ should be understood as the endpoint of the time interval, i.e., for example, we will be using the notation
	 $$ \norm{f}_{L^p_t\dot{H}^s} \bydef \norm {f}_{L^p\left([0,t); \dot{H}^s(\mathbb{R}^3)\right)}.$$
	 Accordingly, all the quantities involving time-Lebesgue norms are continuous functions on the real half-line $[0,\infty)$, for all $p\in [1,\infty]$.

	 \subsection{Approximation scheme and compactness}\label{section:compactness}

Solutions of \eqref{Maxwell:system:*2} will be constructed by a standard     compactness method based on the smooth approximation of solutions. Although this is now classical in the literature, we briefly recall here the principle ideas in that method. This will allow us to justify all the formal computations in the derivation  of our global a priori bounds, below.

 We  follow  the approach laid out in \cite[Section  3.1]{ah}, for instance. Thus,  we begin with  approximating the Navier--Stokes--Maxwell equations \eqref{Maxwell:system:*2} by a new system of equations which has a unique smooth solution.
 
  An admissible approximation should preserve the structures satisfied by the original system such as, in our case, the energy inequality \eqref{energy-inequa} and the axisymmetric properties. A possible choice of that approximation is given, for $n\in \mathbb{N}$, by
	 \begin{equation*} 
	\begin{cases}
		\begin{aligned}
		& \partial_t u_n +(S_nu_n) \cdot\nabla u _n  = \nu \Delta u_n- \nabla p_n + j_n \times B_n, &\div u_n =0,&
			\\
			& \frac{1}{c} \partial_t E_n - \nabla \times B_n =-(S_n j_n) , &\div E_n = 0,&
			\\
			& \frac{1}{c} \partial_t B_n + \nabla \times E_n  = 0 , &\div B_n = 0,   &
			\\
		& j_n= \sigma \big( cE_n + S_nP(u_n \times B_n)\big), &\div j_n = 0,&
		\end{aligned}
	\end{cases}
\end{equation*}
	 supplemented with the initial data $ (u_n,E_n,B_n)|_{t=0} \bydef S_n( u_0,E_0,B_0) $, where $S_n$ is a  radial cutoff Fourier multiplier which restricts the frequencies to the set $\{|\xi| \leq 2^n \}$ and converges to the identity as $n\to\infty$.

	 Showing that the approximate system, for any fixed $n\in \mathbb{N}$, has a unique global solution is a routine procedure based on standard methods.
	Moreover, one can show that the corresponding solutions are smooth in time and space  and satisfy the energy inequality 
\begin{equation*} 
	\norm {\left( u_n , E_n ,B_n\right)(t) }_{L^2}^2+2\nu\int_0^t \norm {\nabla u_n(\tau)}_{L^2}^2 d\tau
	+\frac{2}{\sigma}\int_0^t \norm {j_n(\tau)}_{L^2}^2 d\tau = \norm {S_n( u_0,E_0,B_0)}_{L^2}^2 \leq \mathcal{E}_0^2,
\end{equation*}
where we recall that 
\begin{equation*}
	\mathcal{E}_0 = \norm {(u_0,E_0,B_0)}_{L^2}.
\end{equation*}
If, furthermore, the initial data are assumed to be axisymmetric, and the cutoff operator $S_n$ does not alter that structure (which is the case when $S_n$ can be characterized as a convolution with a radial function), then the approximate smooth solution remains axisymmetric for all times.

Noting, once again, that the energy inequality above is not sufficient to ensure  the stability, as $n\to\infty$, of the nonlinear term
$$ 
j_n \times B_n ,
$$
our strategy thus consists in  looking for better bounds in higher regularity spaces, uniformly with respect to the regularizing parameter $n$. In particular, we will obtain new bounds which will allow us to establish the strong relative compactness  of all vector field $u_n$, $E_n$, $B_n$ and $j_n$ in $L^2_{\text{loc},t,x}$ and, then, conclude that  the approximate solutions converge, as $n\to \infty$, to an exact solution of \eqref{Maxwell:system:*2}. 

With such strong bounds, the full justification of the stability of the approximate system follows from standard compactness arguments, which we will therefore omit. We  refer to \cite{ah} for   some details on similar arguments applied to construction of global solutions of the Euler--Maxwell system in two dimensions of space.

   Normally, we should prove the a priori estimates on the approximate system above. However, as usual, since the approximate system enjoys the same structure as the original one, we will, from now on, assume that the solutions to \eqref{Maxwell:system:*2} are smooth and we will perform all estimates directly on \eqref{Maxwell:system:*2}.

 \subsection{Weak--strong uniqueness}

The functional spaces used in Theorem  \ref{Thm:1} are  sufficient to prove   uniqueness results for \eqref{Maxwell:system:*2}. Although  this is not hard to show, we choose to provide in the following proposition a weak--strong stability result with  a self-contained  proof   which covers     the uniqueness of solutions  claimed in Theorem  \ref{Thm:1}. See also \cite[Proposition 3.11]{as} for a similar weak--strong principle for the same system.

\begin{prop}[$L^2$ weak--strong stability]
Let $c>0$ and $(u_i,E_i,B_i)_{i\in \{1,2\}}$ be two weak solutions of \eqref{Maxwell:system:*2} associated with the same initial data and satisfying the energy inequality \eqref{energy-inequa}.
Assume moreover that 
$$ u_2\in L^2_{\loc}(\mathbb{R}^+;  L^\infty), \qquad E_2  \in L^2_{\loc}(\mathbb{R}^+;  L^3), \qquad  B_2 \in L^\infty _{\loc}(\mathbb{R}^+;  L^3).$$
Then, the two solutions are equal.
\end{prop}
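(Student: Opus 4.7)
The approach would be a standard $L^2$ weak--strong stability argument adapted to the Navier--Stokes--Maxwell structure. Set $(\delta u,\delta E,\delta B,\delta j)\bydef(u_1-u_2,E_1-E_2,B_1-B_2,j_1-j_2)$ and $Y(t)\bydef\frac{1}{2}\norm{(\delta u,\delta E,\delta B)(t)}_{L^2}^2$. Since both solutions share the same initial data, $Y(0)=0$, so the goal is to derive a Gr\"onwall inequality forcing $Y\equiv 0$.

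The first step would be to establish, for a.e.\ $t\geq 0$, the energy balance
\begin{equation*}
Y(t)+\nu\int_0^t\norm{\nabla\delta u}_{L^2}^2\,d\tau+\frac{1}{\sigma}\int_0^t\norm{\delta j}_{L^2}^2\,d\tau\leq \int_0^t\mathcal{N}(\tau)\,d\tau,
\end{equation*}
where, after all the algebraic cancellations, the nonlinear residual reads
\begin{equation*}
\mathcal{N}=-\int_{\mathbb{R}^3}(\delta u\cdot\nabla\delta u)\cdot u_2\,dx+\int_{\mathbb{R}^3}(j_2\times\delta B)\cdot\delta u\,dx+\int_{\mathbb{R}^3}\delta j\cdot(u_2\times\delta B)\,dx.
\end{equation*}
This balance would be obtained by adding the energy inequalities for $(u_i,E_i,B_i)$, $i=1,2$, and then subtracting twice the cross quantity $\int u_1\cdot u_2+\int E_1\cdot E_2+\int B_1\cdot B_2$, whose time evolution is computed by testing the weak formulation of each equation against the corresponding component of the other solution. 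The crucial cancellation at this stage is the Lorentz identity
\begin{equation*}
\int_{\mathbb{R}^3}(\delta j\times B_1)\cdot\delta u\,dx+\int_{\mathbb{R}^3}\delta j\cdot(\delta u\times B_1)\,dx=0,
\end{equation*}
which eliminates every contribution involving $B_1$; this algebraic miracle is the heart of the argument, since $B_1$ only enjoys $L^\infty_t L^2_x$ regularity and would otherwise be uncontrollable.

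The second step would be to bound $\mathcal{N}$ term by term using the extra regularity of solution~$2$. Integration by parts in the convective term (using $\div\delta u=0$) yields $\abs{\int(\delta u\cdot\nabla\delta u)\cdot u_2}\leq\norm{u_2}_{L^\infty}\norm{\delta u}_{L^2}\norm{\nabla\delta u}_{L^2}$, and H\"older's inequality bounds the last term by $\norm{\delta j}_{L^2}\norm{u_2}_{L^\infty}\norm{\delta B}_{L^2}$. For the middle term I would substitute Ohm's law $j_2=\sigma cE_2+\sigma P(u_2\times B_2)$ and split it into two pieces, respectively controlled by $c\norm{E_2}_{L^3}\norm{\delta B}_{L^2}\norm{\delta u}_{L^6}$ and $\norm{u_2}_{L^\infty}\norm{B_2}_{L^3}\norm{\delta B}_{L^2}\norm{\delta u}_{L^6}$, invoking the Sobolev embedding $\dot H^1(\mathbb{R}^3)\hookrightarrow L^6(\mathbb{R}^3)$ and the $L^3$-continuity of Leray's projector. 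Young's inequality then absorbs $\norm{\nabla\delta u}_{L^2}^2$ and $\norm{\delta j}_{L^2}^2$ into $\frac{\nu}{2}\norm{\nabla\delta u}_{L^2}^2+\frac{1}{2\sigma}\norm{\delta j}_{L^2}^2$, leaving a remainder of the form $\Phi(t)Y(t)$ with
\begin{equation*}
\Phi(t)\lesssim \norm{u_2(t)}_{L^\infty}^2\bigl(1+\norm{B_2(t)}_{L^3}^2\bigr)+c^2\norm{E_2(t)}_{L^3}^2.
\end{equation*}
The prescribed hypotheses on $(u_2,E_2,B_2)$ ensure $\Phi\in L^1_{\loc}(\mathbb{R}^+)$, so Gr\"onwall's lemma combined with $Y(0)=0$ forces $Y\equiv 0$, proving uniqueness.

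The main obstacle I anticipate is the rigorous justification of the cross-term computation at the level of weak solutions. Since $u_1$ cannot be tested against itself, one must instead test the momentum equation for $u_1$ against $u_2$ (and symmetrically the equation for $u_2$ against $u_1$), and likewise for Faraday and Amp\`ere. The assumptions $u_2\in L^2_{\loc}(L^\infty)$, $E_2\in L^2_{\loc}(L^3)$ and $B_2\in L^\infty_{\loc}(L^3)$ are precisely what renders $u_2$ and $j_2$ admissible as test functions against distributions such as $u_1\cdot\nabla u_1$ and $j_1\times B_1$, and what delivers the integrability of $\Phi$. A density/regularization argument in the spirit of Serrin's classical weak--strong uniqueness theorem, combined with the treatment of the Maxwell block from \cite[Section~3.2.3]{as}, would make every step rigorous and complete the proof.
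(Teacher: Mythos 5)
Your proposal is correct and follows essentially the same route as the paper's proof: the same cross-energy (relative energy) computation, the same triple-product cancellation $(\delta j\times B_1)\cdot\delta u+\delta j\cdot(\delta u\times B_1)=0$ that removes every term involving the rough field $B_1$, the same residual $-\int(\delta u\cdot\nabla\delta u)\cdot u_2+\int(j_2\times\delta B)\cdot\delta u\pm\int\delta j\cdot(u_2\times\delta B)$, and the same use of Ohm's law together with the hypotheses on $(u_2,E_2,B_2)$ to make the Gr\"onwall factor locally integrable. The only cosmetic difference is that you substitute Ohm's law for $j_2$ inside the estimate, whereas the paper keeps $\|j_2\|_{L^3}$ and verifies $j_2\in L^2_{\loc}(\mathbb{R}^+;L^3)$ at the very end.
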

\begin{proof}
We define the difference of the two solutions by
$$ \widetilde{u} \bydef   u_1-u_2  ,\qquad \widetilde{ E} \bydef   E_1-E_2,\qquad \widetilde{B}   \bydef   B_1-B_2, \qquad \widetilde{j}   \bydef   j_1-j_2 , $$
and we compute, for any $t>0$, that
\begin{equation*}
	\begin{aligned}
		\int_{\mathbb{R}^3} & \left(u_1\cdot u_2+E_1\cdot E_2+ B_1\cdot B_2\right)(t)dx
		+\frac 2\sigma \int_0^t\int_{\mathbb{R}^3}j_1\cdot j_2dxd\tau + 2\nu \int_0^t \int_{\mathbb{R}^3}\nabla u_1 \cdot \nabla u_2dxd\tau  
		\\
		& =-\int_0^t\int_{\mathbb{R}^3} (j_2\times \widetilde{B})\cdot \widetilde{u} dxd\tau
		+\int_0^t\int_{\mathbb{R}^3} 
		(\widetilde{j}\times \widetilde{B})\cdot u_2 
		 dxd \tau
		 - \int_0^t\int_{\mathbb{R}^3} 	\left(u_2\otimes \widetilde{u}\right):\nabla \widetilde{u}  dx d\tau.
	\end{aligned}
\end{equation*}
Note that the computations above can be  rigorously justified  by smoothing out the two solutions and following the proof of \cite[Lemma 2.1]{GP02}, for instance.

Therefore, setting 
$$F(t)\bydef \frac12\left(\norm {\widetilde u(t)}_{L^2}^2 + \|\widetilde E(t)\|_{L^2}^2 + \|\widetilde B(t)\|_{L^2}^2\right),$$
and making use of the energy inequality \eqref{energy-inequa}, which is assumed to be satisfied by both solutions, we obtain, for any $\varepsilon>0$, that
\begin{equation*}
	\begin{aligned}
		F(t)  +\nu \norm {\nabla \widetilde{u} }_{L^2_{t,x}}^2 + \frac{1}{\sigma}\|\widetilde{j} \|_{L^2_{t,x}}^2  
		&\leq \int_0^t \norm {j_2(\tau)}_{L^3}   \|\widetilde{B}(\tau)\|_{L^2} \norm {\widetilde{u}(\tau)}_{L^6} d\tau \\
		&\quad +  \int_0^t \norm {u_2(\tau)}_{L^\infty} \left(  \|\widetilde{j}(\tau)\|_{L^2} \|\widetilde{B}(\tau)\|_{L^2} + \norm {\widetilde{u}(\tau)}_{L^2} \norm {\nabla\widetilde{ u}(\tau)}_{L^2}  \right) d\tau\\
		&\leq \varepsilon   + \frac{\nu}{4} \norm {\nabla \widetilde{u} }_{L^2_{t,x}}^2 + \frac{C}{\nu}  \int_0^t \norm {j_2(\tau)}_{L^3}^2   \|\widetilde{B}(\tau)\|_{L^2}^2   d\tau \\
		& \quad +  \frac{\nu}{4} \norm {\nabla \widetilde{u} }_{L^2_{t,x}}^2 + \frac{1}{\nu}  \int_0^t \norm {u_2(\tau)}_{L^\infty}^2   \|\widetilde{u}(\tau)\|_{L^2}^2   d\tau\\
		&\quad +  \frac{1}{2\sigma} \|\widetilde{j} \|_{L^2_{t,x}}^2+  \frac{\sigma}{2}\int_0^t \|u_2(\tau)\|_{L^\infty}^2 \|\widetilde{B}(\tau)\|_{L^2}^2  d\tau,
	\end{aligned}
\end{equation*} 
where $C>0$ is the constant from the embedding $\dot{H}^1\hookrightarrow L^6(\mathbb{R}^3).$

Hence, by further simplifying the preceding bound and applying Gr\"onwall's lemma, we arrive at the conclusion that 
\begin{equation}\label{weak-strong:uniqueness}
	\begin{aligned}
F(t) &+\frac{\nu}{2} \norm {\nabla \widetilde{u} }_{L^2_{t,x}}^2 + \frac{1}{2\sigma}\|\widetilde{j} \|_{L^2_{t,x}}^2  \leq \varepsilon    \exp\left( C_{\nu,\sigma} \int_0^t  \left(\|j_2(\tau) \|_{L^3}^2 +  \|u_2(\tau) \|_{L^\infty}^2\right) d\tau \right),
	\end{aligned}
\end{equation} 
for any $\varepsilon>0$ and some constant $C_{\nu,\sigma}>0$. 

Accordingly, by further exploiting   Ohm's law  
$$j_2 = cE_2 + P(u_2 \times B_2),$$
 and employing the additional bounds on the second solution $(u_2,E_2,B_2)$, it is  readily seen then that the right-hand side in \eqref{weak-strong:uniqueness} is finite and vanishes as $\varepsilon\to 0$. This concludes the proof of the weak--strong uniqueness.
\end{proof} 

\subsection{Existence of global solutions}\label{Section:closing the estimates}

	Here, we prove existence of global solutions of \eqref{Maxwell:system:*2}, as it is claimed in  Theorem \ref{Thm:1}. In view of the arguments laid out in Section \ref{section:compactness}, above, this proof is reduced to establishing adequate a priori global estimates for \eqref{Maxwell:system:*2}.

	The proof is split into two parts. The first part is devoted to the case of rough profiles 
	$$ c^{-1}(E_0^c,B_0^c) \in  \dot{B}^{\frac{5}{2}}_{2,1},$$
	uniformly in $c>0$. This means that the $\dot{B}^{\frac{5}{2}}_{2,1} $ norm of the initial data is allowed to blow up, as $c\to \infty,$ with a rate which is at most of order $c$.

	 In the second part of the proof below, we deal with the case of regular  profiles, i.e., 
	 $$(E_0^c,B_0^c) \in  \dot{B}^{\frac{5}{2}}_{2,1},$$
	 uniformly with respect to $c>0$.

	 \subsubsection{Rough profiles}

	 Here, we assume that 
	 $$ c^{-1}(u_0^c,E_0^c,B_0^c) \in  \dot{B}^{\frac{5}{2}}_{2,1},$$
uniformly with respect to $c\in(0,\infty)$,	 meaning that $  (u_0^c,E_0^c,B_0^c) $ can be unbounded in $  \dot{B}^{\frac{5}{2}}_{2,1}$ as $c\rightarrow \infty.$

\emph{Control of the velocity field.} The principal  control of   velocity fields is obtained in Proposition \ref{prop:almost-parabolic-ES}. For convenience, we rewrite here its main estimate:
\begin{equation} \label{velocity:A}
\begin{aligned}
& \norm{(\omega, \Omega)}_{L^\infty_t L^2 \cap L^2_t \dot{H}^1 }\\
 & \quad\quad \lesssim   \left(\norm{(\omega_0,\Omega_0)}_{L^2} +\norm { \frac{1}{c}\partial_t E}_{L^2_t \dot{H}^\frac{1}{2}  }    \norm {B}_{L^\infty_t H^2  }     + \norm {\Gamma}_{L^\infty_t L^3 }\norm {(B,\Gamma)}_{L^2_t \dot{H}^1 } \right) e^{ C\mathcal{E}^2_0 }. 
\end{aligned}
\end{equation} 
 We also notice, due to  the energy inequality \eqref{energy-inequa}, that    
 \begin{equation}\label{energy:t}
 \mathcal{E}_0 \leq \mathcal{E}_t \bydef \norm {(u , E ,B)  }_{L^\infty_t L^2} + \norm {  u }_{L^2_t\dot{H}^1}   
	+  \norm {j }_{L^2_tL^2}   \lesssim   \mathcal{E}_0 .
 \end{equation}
 Moreover, the standard embeddings
 \begin{equation*}
 \norm{u}_{L_t^{4}\dot B^{\frac{3}{2}}_{2,1}}  \lesssim  \norm{u}_{L^\infty_t   \dot{H}^1 \cap L^2_t \dot{H} ^2 } \lesssim   \norm{ \omega }_{L^\infty_t L^2 \cap L^2_t \dot{H}^1 }
 \end{equation*} 
 will be used regularly without explicit reference.
 The justification of the first inequality above is  done   by applying an abstract interpolation argument, whereas the second one   straightforwardly follows from the Biot--Savart law.

 \emph{Control of high electromagnetic frequencies.}
 The control of high electromagnetic frequencies is given by     Lemma \ref{lemma-high F}. More precisely, we obtain from \eqref{E_>:5/2}, with the specific values   $q\in\{2,\infty\}$, that
   \begin{equation}\label{high:B}
   \begin{aligned}
   c^{-1}  \norm{ (E,B) }_{\widetilde{L} ^\infty _t \dot{B}^{\frac{5}{2} }_{2,1,>}  }
   &+  \norm{ (E,B) }_{\widetilde{L} ^2  _t \dot{B}^{\frac{5}{2} }_{2,1,>}  }
   \\
   &\lesssim    c^{-1 }\norm { (E_0,B_0)}_{\dot{B}^\frac{5}{2} _{2,1,>}}    \\
	  & \; +c^{-  \frac{1}{2}   }\norm u_{L^\infty _t \dot{H} ^1 \cap L^2 _t \dot{H} ^2  }  \left(  \norm{ B }_{\widetilde{L} ^2_t\dot{B}^{ \frac{5}{2}}_{2,1,>} } +  \norm{ B }_{L ^2_t \dot{B}^{ \frac{5}{2}}_{2,1,<} }    \right).
   \end{aligned}
\end{equation}

	 \emph{Control of low electromagnetic frequencies---Part A.} The control of  low electromagnetic frequencies relies on Lemma \ref{low:freq:estimates}, above. Specifically, applying \eqref{frequencies:1b} with the values $q =2$, $ p=m=4$, $ \alpha=1$ and $s=\frac{3}{2}$ yields that
   \begin{equation*} 
   \begin{aligned}
			  \norm{B}_{L^2 _t\dot B^{ \frac{5}{2}}_{2,1,<}}
		&	 \lesssim  
			c^{-1}\norm{E_0}_{\dot B^{\frac{5}{2}}_{2,2,<}}
			+\norm{B_0}_{\dot B^{\frac{3}{2}}_{2,2,<}}
			+\norm{u}_{L_t^{4}\dot B^{\frac{3}{2}}_{2,1}} 
			\norm{B}_{L_t^{4}\dot B^{\frac{3}{2}}_{2,1} }.
			   \end{aligned}
\end{equation*}   
			Likewise, employing \eqref{frequencies:1b}, again,  with the values $p= 2 $ and $s=2$ entails that
		   \begin{equation*} 
   \begin{aligned}
			  \norm{B}_{L^2 _t\dot B^{ \frac{5}{2}}_{2,1,<}}
		&	 \lesssim  
			c^{-1}\norm{E_0}_{\dot B^{\frac{5}{2}}_{2,2,<}}
			+\norm{B_0}_{\dot B^{\frac{3}{2}}_{2,2,<}}
			+\norm{u}_{L_t^{4}\dot B^{\frac{3}{2}}_{2,1}} 
			\norm{B}_{L_t^{2}\dot B^{2}_{2,1} } .
   \end{aligned}
\end{equation*}   
		Actually, since \eqref{Maxwell:system:*} is a linear system, by  splitting   high and low frequencies of $B$ in the source term $P(u\times B)$,  one can straightforwardly adapt the proofs of the preceding estimates to obtain the more useful   control 
   \begin{equation} \label{low:A}
   \begin{aligned}
			  \norm{B}_{L^2 _t\dot B^{ \frac{5}{2}}_{2,1,<}} 
			&	 \lesssim  
			 \norm{(E_0,B_0)}_{\dot{B}^{\frac{3}{2}}_{2,2,<}}\\
		& \quad	+\norm{u}_{L_t^{4}\dot B^{\frac{3}{2}}_{2,1}} \left(\norm{\mathrm{1}_{ \{|D|< \frac{\sigma c}{2}  \}} B}_{L_t^{4}\dot B^{\frac{3}{2}}_{2,1} } +   
			\norm{\mathrm{1}_{ \{|D|\geq \frac{\sigma c}{2}  \}} B}_{L_t^{2}\dot B^{2}_{2,1}  }   \right)  \\ 
			&	 \lesssim  
			 \norm{(E_0,B_0)}_{\dot{H}^{\frac{3}{2}} } \\ 
			 & \quad
			+\norm{u}_{L_t^{\infty}\dot H^{1} \cap L_t^{2 }\dot H^{2}} \left(\norm{  B}_{L_t^{4}\dot B^{\frac{3}{2}}_{2,1,<}}    +   
		c^{ -\frac{1}{2}}	\norm{  B}_{L_t^{2}\dot B^{\frac{5}{2}}_{2,1,<} } + c^{ -\frac{1}{2}}	\norm{  B}_{\widetilde{L} _t^{2}\dot B^{\frac{5}{2}}_{2,1,>} }   \right) .
   \end{aligned}
\end{equation}   
			Before we proceed with the proof, let us establish the energy estimate for $(E,B)$ in Sobolev spaces, which will then be combined with the preceding estimate to complete the proof of the low-frequency bounds on electromagnetic fields.

			\emph{Energy estimate for electromagnetic fields.}
		Applying Lemma  \ref{lemma.reg-persistence} with the values $s=\frac{3}{2}$ and $(p,q,\varepsilon)= (2,\infty,\frac{1}{2})$, and then with the values $(p,q)= (4,4)$ at the endpoint $\varepsilon=0$, we find  that 
			\begin{equation*}
			\norm {(E,B)}_{L^\infty_t \dot{H}^\frac{3}{2}} + c \norm {E}_{L^2_t\dot{H}^\frac{3}{2}}\lesssim  \norm {(E_0,B_0)}_{ \dot{H}^\frac{3}{2}} + \norm u_{L_t^{\infty}\dot H^{1} } \norm B_{ L^2_t \dot{H}^ 2 },
			\end{equation*}
			and 
				\begin{eqnarray*}
			\norm {(E,B)}_{L^\infty_t \dot{H}^\frac{3}{2}} + c \norm {E}_{L^2_t\dot{H}^\frac{3}{2}}\lesssim  \norm {(E_0,B_0)}_{ \dot{H}^\frac{3}{2}} + \norm u_{L_t^{4}\dot B^{\frac{3}{2}}_{2,1} } \norm B_{ L^4_t \dot{H}^  \frac{3}{2} }.
			\end{eqnarray*}
			
			In fact, by spliting high and low frequencies  of $B$ in the right-hand side of the preceding estimates,  it is possible to adapt their proofs to obtain the following more useful bound
\begin{equation*}   
 \begin{aligned}
			\norm {(E,B)}_{L^\infty_t \dot{H}^\frac{3}{2}} + c \norm {E}_{L^2_t\dot{H}^\frac{3}{2}} & \lesssim  \norm {(E_0,B_0)}_{ \dot{H}^\frac{3}{2}} \\
			& \quad + \norm u_{L_t^{\infty}\dot H^{1} \cap L^2_t \dot{H}^2 } \left( \norm{  \mathrm{1}_{ \{|D|< \frac{\sigma c}{2}  \}} B}_{ L^4_t \dot{H}^ \frac{3}{2} }   +  \norm{  \mathrm{1}_{ \{|D| \geq \frac{\sigma c}{2}  \}} B}_{ L^2_t \dot{H}^ 2 } \right).
   \end{aligned}
\end{equation*}
			Therefore, similarly to    \eqref{low:A},  we infer that
	\begin{equation}   \label{energy-H32}
 \begin{aligned}  
			&\norm{(E,B)}_{L^\infty _t\dot H^{\frac{3}{2} }  }
			+ c \norm {E}_{L^2_t\dot{H}^\frac{3}{2}}  \\
			&\qquad\qquad \lesssim 
			 \norm{(E_0,B_0)}_{\dot{H}^{\frac{3}{2}}}
			\\
			&\qquad\qquad\quad+ \norm{u}_{L_t^{\infty}\dot H^{1} \cap L_t^{2 }\dot H^{2}} \left(\norm{  B}_{L_t^{4}\dot B^{\frac{3}{2}}_{2,1,<}}    +   
		c^{ -\frac{1}{2}}	\norm{  B}_{L_t^{2}\dot B^{\frac{5}{2}}_{2,1,<} } + c^{ -\frac{1}{2}}	\norm{  B}_{\widetilde{L} _t^{2}\dot B^{\frac{5}{2}}_{2,1,>} }   \right) .
 \end{aligned}
\end{equation}

\emph{Control of low electromagnetic frequencies---Part B.}
Now, we carry on with the estimates of low frequencies of   electromagnetic fields by first combining \eqref{low:A} and \eqref{energy-H32} to find that  
\begin{equation}\label{low-enery}
			\begin{aligned}
			\norm{(E,B)}_{L^\infty _t\dot H^{\frac{3}{2} }  } &+ c \norm {E}_{L^2_t\dot{H}^\frac{3}{2}}  +   \norm{B}_{L^2 _t\dot B^{ \frac{5}{2}}_{2,1,<}} \\ 
			& 	 \lesssim 
			 \norm{(E_0,B_0)}_{\dot{H}^{\frac{3}{2}}}+ c^{ -\frac{1}{2}}	 \norm{u}_{L_t^{\infty}\dot H^{1} \cap L_t^{2 }\dot H^{2}}\left(  
		 	\norm{  B}_{L_t^{2}\dot B^{\frac{5}{2}}_{2,1,<} } + \norm{  B}_{\widetilde{L} _t^{2}\dot B^{\frac{5}{2}}_{2,1,>} }   \right)     \\
			 &  
			 \quad +\norm{u}_{L_t^{\infty}\dot H^{1} \cap L_t^{2 }\dot H^{2}} \norm{  B}_{L_t^{4}\dot B^{\frac{3}{2}}_{2,1,<}}     .
			\end{aligned}
\end{equation}		

Note that   one cannot expect to observe any decay, with respect to  $c$,  in the nonlinear term 
$$ \norm{u}_{L_t^{\infty}\dot H^{1} \cap L_t^{2 }\dot H^{2}}  \norm{  B}_{L_t^{4}\dot B^{\frac{3}{2}}_{2,1,<}}.$$
Accordingly, at this stage, the bound  \eqref{low-enery}    does not seem to be helpful  in obtaining a global control without additional conditions on the size of the initial data.  
 
  Nevertheless, our key observation here is that the space $L_t^{4}\dot B^{\frac{3}{2}}_{2,1} $ can be obtained by interpolating $L^\infty_t\dot H^\frac 32$, $ L_t^{2}\dot B^{\frac{5}{2}}_{2,1 } $ and $ L^2_t \dot{H}^1$. Note that $L^\infty_t\dot H^\frac 32$ and $ L_t^{2}\dot B^{\frac{5}{2}}_{2,1 ,<} $  appear in the left-hand side of \eqref{low-enery}, whereas, $ L^2_t \dot{H}^1$ is a good space  for $B$, because, in view of \eqref{B-almost.GE}, the magnetic field is asymptotically globally bounded in that space.
   Based on this remark, we write, by interpolation, that
   \begin{equation*}
	   \begin{aligned}
		\norm{B}_{L_t^{4}\dot B^{\frac{3}{2}}_{2,1,<} } 
		&\lesssim \left( \int_0^t \norm {B(\tau)}_{\dot{H}^1} ^{\frac{4}{3}}\norm {B(\tau)}_{\dot{B}^\frac{7}{4}_{2,1,<}}^{\frac{8}{3}} d\tau \right)^\frac{1}{4}\\
		&\lesssim   \norm {B }_{L^2_t \dot{H}^1} ^{\frac{1}{3}}\norm {B }_{L^8_t\dot{B}^\frac{7}{4}_{2,1,<}}^{\frac{2}{3}}  \\
		&\lesssim   \norm {B }_{L^2_t \dot{H}^1} ^{\frac{1}{3}}\left(  \norm{B}_{L^\infty _t\dot B^{\frac{3}{2} }_{2,2,<}} + \norm{B}_{L^2 _t\dot B^{ \frac{5}{2}}_{2,1,<}}  \right)^{\frac{2}{3}},
	   \end{aligned}
\end{equation*}
which leads to
\begin{equation}\label{low:B}
 \begin{aligned}
 \norm{(E,B)}_{L^\infty _t\dot H^{\frac{3}{2} }  } &+ c \norm {E}_{L^2_t\dot{H}^\frac{3}{2}}  +   \norm{B}_{L^2 _t\dot B^{ \frac{5}{2}}_{2,1,<}} \\
			&	 \lesssim  
			 \norm{(E_0,B_0)}_{\dot{H}^{\frac{3}{2}}}
			 +c^{ -\frac{1}{2}}\norm{u}_{L_t^{\infty}\dot H^{1} \cap L_t^{2 }\dot H^{2}}   \left( 
			 		 	\norm{  B}_{L_t^{2}\dot B^{\frac{5}{2}}_{2,1,<} } + 	\norm{  B}_{\widetilde{L} _t^{2}\dot B^{\frac{5}{2}}_{2,1,>} }   \right)
			\\
			&\quad+ \norm{u}_{L_t^{\infty}\dot H^{1} \cap L_t^{2 }\dot H^{2}} \norm {B }_{L^2_t \dot{H}^1} ^{\frac{1}{3}}\left(  \norm{B}_{L^\infty _t\dot B^{\frac{3}{2} }_{2,2,<}} + \norm{B}_{L^2 _t\dot B^{ \frac{5}{2}}_{2,1,<}}\right)^{\frac{2}{3}}.
 \end{aligned}
\end{equation}

In order to complete our summary of   all relevant low-frequency bounds on electromagnetic fields,   we recall the estimate
			\begin{equation}\label{E-AAA***}
\begin{aligned}
\norm E_{L^2_t\dot{B}^\frac{5}{2}_{2,1} } &\lesssim c^{-1}  \norm {(E_0,B_0)}_{\dot{B}^\frac{5}{2}_{2,1}}  
 +  c^{-\frac{1}{2}}\norm{    u   }_{L  ^{\infty }_t\dot{H}^1  \cap L  ^{2 }_t\dot{H}^2 } \left( \norm{   B  }_{\widetilde{L}   ^{2 }_t\dot{B}^{\frac{5}{2}}_{2,1,>    } } +  \norm{   B }_{L^2  _t\dot{B}^{\frac{5}{2}}_{2,1  ,< }  } \right),
\end{aligned}
\end{equation}
which is established in \eqref{E-AAA}.

\emph{Decay of electric fields  and almost-parabolic   bounds on magnetic fields.}
 For later use, we recall and add some precision to the bounds proved in Section \ref{section:AGE}, above.
			
			  More specifically, observe first that  Proposition \ref{decay-E}, with the values $s\in\{0,\frac{1}{2}\}$, provides us with the decay estimates  
			 \begin{equation} \label{E-decay:A000}
	 \begin{aligned}
	\norm {\frac{1}{c} \partial_t E}_{L^2_tL^2}
	&\lesssim   c^{-1} \norm {\nabla \times B_0 - j_0}_{L^2}
	+c^{-1}\norm{E_0}_{\dot H^1}
	+  c^{-2} \norm {   u  }_{ L^ \infty _t  \dot{H}^1    \cap L^ 2_t \dot{H}^2  }^2    \norm {   B}_{L^\infty_t \dot{H}^\frac{3}{2}  }  \\
 &\quad+ c^{-2}(1+ \mathcal{E}_0)  \Big(       \norm {   u  }_{ L^ \infty _t \dot{H}^1    \cap L^ 2_t \dot{H} ^2  } +   \norm {   B}_{L^\infty_t \dot{H}^ \frac{3}{2}  }       \Big)   \\
 &\quad\times\Big(   \norm {   B}_{L^\infty_t \dot{H}^\frac{3}{2} }   +   c \norm {E}_{L^2_t\dot{H}^\frac{3}{2} }\Big)
 \end{aligned}
 \end{equation}
 and 
	 \begin{equation} \label{E-decay:A}
	 \begin{aligned}
	\norm {\frac{1}{c} \partial_t E}_{L^2_t\dot{H}^\frac{1}{2}} &\lesssim   c^{-1} \norm {\nabla \times B_0 - j_0}_{\dot{H}^\frac{1}{2}}
	+c^{-1}\norm{E_0}_{\dot H^\frac 32}
	+  c^{-2} \norm {   u  }_{ L^ \infty _t  \dot{H}^1    \cap L^ 2_t \dot{H}^2  }^2    \norm {   B}_{L^\infty_t \dot{H}^\frac{3}{2}  }  \\
 &\quad+ c^{-2}(1+ \mathcal{E}_0)  \Big(       \norm {   u  }_{ L^ \infty _t \dot{H}^1    \cap L^ 2_t \dot{H} ^2  } +   \norm {   B}_{L^\infty_t \dot{H}^ \frac{3}{2}  }       \Big)   \\
 &\quad\times\Big(   \norm {   B}_{L^\infty_t \dot{H}^2 }   +   c \norm {E}_{L^2_t\dot{H}^2}\Big) .
 \end{aligned} 
 \end{equation}

 As for the  almost-parabolic  estimates of $B$, we first utilize Lemma \ref{paradifferential:1} to write that
 \begin{equation*}
\begin{aligned}
c^{-1} \norm {\nabla \times B_0 - j_0}_{L^2}  & = c^{-1} \norm {\nabla \times B_0 - cE_0 - P(u_0\times B_0)}_{L^2} \\
& \lesssim  c^{-1} \left(  \norm {  B_0 }_{\dot{H}^1}   + \norm{  u_0}_{L^2}  \norm{ B_0 }_{\dot{H}^\frac{3}{2}} \right)+\norm{ E_0}_{L^2} \\
& \lesssim  c^{-1} \left(  \norm {  B_0 }_{\dot{H}^1}   +\mathcal{E}_0 \norm{ B_0 }_{\dot{H}^\frac{3}{2}} \right)+\mathcal{E}_0 .
\end{aligned}
\end{equation*}			
 Then, by substituting the preceding control in  \eqref{E-decay:A000}, and by further incorporating the resulting bound in  the estimate from Proposition \ref{prop:almost-parabolic-ES}, we find that 
 	 \begin{equation}  \label{parabolic-ES:A}
	 \begin{aligned}
\norm{ B}_{L^2_t \dot{H}^1 } &\lesssim   \mathcal{E}_0 +  c^ {-1} \big(\norm {(E_0,B_0)}_{\dot{H}^1} + \mathcal{E}_0 \norm {B_0}_{\dot{H}^\frac{3}{2}}\big)+     c^{-2} \norm {   u  }_{ L^ \infty _t  H^1    \cap L^ 2_t \dot{H} ^2  }^2    \norm {   B}_{L^\infty_t \dot{H}^\frac{3}{2}  }     \\
&\quad +c^{-2} \left(1+ \mathcal{E}_0\right)  \left(  \norm {   u  }_{L^\infty_t \dot{H} ^1 \cap  L^ 2_t \dot{H} ^2 }  +  \norm {   B}_{L^\infty_t \dot{H}^\frac{3}{2} }  \right)\left( \norm {   B}_{L^\infty_t \dot{H}^\frac{3}{2} } + \norm {cE}_{L^\infty_t \dot{H}^\frac{3}{2}} \right),
 \end{aligned}
\end{equation}
which provides an asymptotic parabolic regularity estimate on $B$.

  Finally, we emphasize that we will also make use of the similar almost-parabolic estimates on $\Gamma$ obtained in Proposition \ref{prop:almost-parabolic-ES}. More precisely, the relevant estimates on $\Gamma$ are
\begin{equation}\label{parabolic-ES:B:1}
\norm{ \Gamma}_{L^2_t\dot{H}^1}\lesssim  \norm{\Gamma_0}_{L^2}    +c^{-1} \norm {  E_0 }_{\dot{H}^2}   +  c^{-1} \left(     \norm {B}_{L^\infty _t \dot{H}^{2  }   } + c\norm { E}_{L^2_t \dot{H}^2 } \right)
  \end{equation}
  and 
\begin{equation}\label{propagation-L3:gamma}
\norm{ \Gamma}_{L^\infty_t L^3 }\lesssim  \norm { \Gamma_0}_{ L^3   }     +c^{-1} \norm {E_0}_{\dot{H}^\frac{5}{2}}   +      \norm { E}_{L^2_t \dot{H}^\frac{5}{2} }.
\end{equation}

\emph{Nonlinear energy estimate.}
Here, we gather all the bounds above to produce a nonlinear energy estimate. To that end, let us first introduce, for any $t\geq 0$, the functional $\mathcal{H}(t)$ given by
\begin{equation*}
\begin{aligned}
\mathcal{H}(t)& \bydef   \norm{(\omega,\Omega)}_{L^\infty_t L^2 \cap L^2_t \dot{H}^1 }  +  \mathcal{E}_t  + \norm \Gamma_{L^\infty_t L^3}  \\
& \quad + \norm {(E,B)}_{L^\infty_t \dot{H} ^\frac{3}{2}}
+c^{-1}\norm {(E,B)}_{L^\infty_t \dot{B}^\frac{5}{2}_{2,1}} + c \norm E_{L^2_t \dot{H}^\frac{3}{2}}     \\
&\quad+ \norm {(E,B)}_{\widetilde{L} ^2_t \dot{B}^\frac{5}{2}_{2,1,>}} +   \norm B_{L^2_t \dot{B}^\frac{5}{2}_{2,1,<}} +    \norm E_{L^2_t \dot{B}^\frac{5}{2}_{2,1}} + \norm{B}_{L^2_t\dot{H}^1}\\
&\quad+ \norm {\nabla \times  B- j}_{L^\infty_t\dot{H}^{\frac{1}{2}}},
\end{aligned}
\end{equation*}
where $\mathcal{E}_t$ is given in \eqref{energy:t} and all the time-norms are taken over the whole interval $[0,t)$. Accordingly, we conventionally define 
\begin{equation*}
\begin{aligned}
\mathcal{H}(0)& \bydef   \norm{(\omega_0,\Omega_0)}_{  L^2}  +\mathcal{E}_0 + \norm{ \Gamma_0}_{  L^3}  \\
& \quad + \norm {(E_0,B_0)}_{  \dot{H} ^\frac{3}{2}   } +c^{-1}\norm {(E_0,B_0)}_{  \dot{B}^\frac{5}{2}_{2,1}}   + \norm {\nabla \times  B_0- j_0}_{ \dot{H}^{\frac{1}{2}}}.
\end{aligned}
\end{equation*}
In particular, note, for all $t \geq 0$, that
\begin{equation}\label{H:trivial:bound}
	\mathcal{H}(0) \leq \mathcal{H}(t) . 
\end{equation}
Further observe that $\norm{\Gamma_0}_{L^3}\lesssim \norm {B_0}_{\dot H^\frac 32}$, by virtue of Lemma \ref{lemma-f/x:2}.

Now, we claim   an estimate of the form 
$$ \mathcal{H}(t) \leq C_0 + P(\mathcal{H}(t)),  $$
for $t\geq 0$, some constant $C_0>0$ depending only on the initial data, uniformly with respect to $c$, and some polynomial $P\in \mathbb{R}^+[X]$ whose coefficients vanish asymptotically as $c\rightarrow \infty.$
Owing to the latter bound above, Lemma \ref{fix-point:lem}  below will eventually allow us to deduce the desired global estimates, under adequate conditions on the data.

In order to reach such a bound, we first proceed  with the control of $\frac{1}{c} \partial_t E$ by observing that   \eqref{E-decay:A} yields that
 \begin{eqnarray*} 
	\norm {\frac{1}{c} \partial_t E}_{L^2_t\dot{H}^\frac{1}{2}} \lesssim   c^{-1} \mathcal{H}(0) +c^{-2}  \mathcal{H}^3(t) + c^{-2} (1+ \mathcal{E}_0)\mathcal{H}(t) \Big(   \norm {   B}_{L^\infty_t \dot{H}^2 }   +   c \norm {E}_{L^2_t\dot{H}^2}\Big)   .
	\end{eqnarray*} 	
	Therefore, noticing, by an interpolation argument, that 
	\begin{equation}\label{observ.1}
 \norm {B}_{L^\infty _t \dot{H}^{2  }   } + c\norm { E}_{L^2_t \dot{H}^2 }  \lesssim c^{  \frac{1}{2}}    \mathcal{H} (t), 
\end{equation}  
 it then follows that    
 \begin{eqnarray}\label{dt E: final-ES}
	\norm {\frac{1}{c} \partial_t E}_{L^2_t\dot{H}^\frac{1}{2}} \lesssim   c^{-1} \mathcal{H}(t) + c^{-\frac{3}{2} } (1+ \mathcal{E}_0)\mathcal{H}^2(t)+c^{-2}   \mathcal{H}^3(t)  .
	\end{eqnarray}
	
Next, we exploit the almost-parabolic estimate on  $B$ given in \eqref{parabolic-ES:A}. To that end, note first, by splitting the frequencies of $B_0$, that 
	$$ \begin{aligned}
	c^{-1}\norm {(E_0,B_0)}_{\dot{H}^1} &\leq c^{-1 } \left(   \norm {(E_0,B_0)}_{\dot{B}^1_{2,2,<}} + \norm {(E_0,B_0)}_{\dot{B}^1_{2,2,>}} \right) \\
	&\lesssim    \norm {(E_0,B_0)}_{L^2} +c ^{-\frac{3}{2}} \norm {(E_0,B_0)}_{\dot{H}^\frac{3}{2} } \\
	&\lesssim \mathcal{E}_0  + c ^{-\frac{3}{2}}  \mathcal{H}  (0) .
	\end{aligned}
	$$ 
	In addition to that,    using the fact that $ \mathcal{H}(0)\leq \mathcal{H}(t)$, one sees that
 	\begin{equation*}
\begin{aligned}
	c^{-1}\mathcal{E}_0 \norm {B_0}_{\dot{H}^\frac{3}{2}} &\leq   \mathcal{E}_0 + c^{-2}\mathcal{E}_0 \norm {B_0}_{\dot{H}^\frac{3}{2}}^2\\
&\leq  \mathcal{E}_0 + c^{-2}(1+\mathcal{E}_0) \mathcal{H} ^2(t).
\end{aligned}
	\end{equation*}  
	Accordingly,   \eqref{parabolic-ES:A} yields, for $c\geq 1$, that
		\begin{equation}\label{final:B-l2H1}	 
	\begin{aligned}
	\norm{ B}_{L^2_t \dot{H}^1 }  
  &  \lesssim   &     \mathcal{H}(0) +  c^{-2} (1+ \mathcal{E}_0)       \mathcal{H}^2(t)    + c^{-2}         \mathcal{H}^3(t)   
	\end{aligned}
\end{equation}	 	 
and,  since $\mathcal{H}(0)\leq \mathcal{H}(t)$, that
\begin{equation} \label{B:parabolic ES:B***}	 
	\begin{aligned}
	\norm{ B}_{L^2_t \dot{H}^1 }  
  &  \lesssim   & \mathcal{E}_0+ c^{-\frac{3}{2} }   \mathcal{H}(t) +  c^{-2} (1+ \mathcal{E}_0)       \mathcal{H}^2(t)    + c^{-2}         \mathcal{H}^3(t)   .
	\end{aligned}
\end{equation}

We turn now our attention to the bounds on $\Gamma$  given by  \eqref{parabolic-ES:B:1} and  \eqref{propagation-L3:gamma}. Observing, by a simple interpolation argument, that 
  $$ c^{-1} \norm {E_0}_{\dot{H}^2} \leq c^{- \frac{1}{2} } \left(c^{-1 }  \norm {E_0}_{\dot{H}^{\frac{5}{2}}} \right) ^ \frac{1}{2} \norm {E_0}_{\dot{H}^{\frac{3}{2}}} ^ \frac{1}{2} \leq c^{- \frac{1}{2}} \mathcal{H}(0) \leq  c^{- \frac{1}{2}} \mathcal{H}(t),$$
it follows, by incorporating  \eqref{observ.1} into \eqref{parabolic-ES:B:1}, that
  \begin{equation}\label{final:ES-GammaH1}
\norm{ \Gamma}_{L^2_t\dot{H}^1}\lesssim  \norm{\Gamma_0}_{L^2}      +  c^{-\frac{1}{2}} \mathcal{H}(t).
  \end{equation} 
As for the $L^\infty_tL^3$ bound on $\Gamma$, we begin by deducing from   \eqref{E-AAA***}    that
   \begin{equation}\label{E-AAA:2}
\norm E_{L^2_t \dot{B}^\frac{5}{2}_{2,1} } \lesssim \mathcal{H}(0) +  c^{-\frac{1}{2}}\mathcal{H}^2(t),
\end{equation}
which,
in view of  \eqref{propagation-L3:gamma}, yields that   
\begin{equation} \label{final:ES-Gamma3}
\norm{ \Gamma}_{L^\infty_t L^3 }\lesssim   \mathcal{H}(0) +  c^{-\frac{1}{2}}\mathcal{H}^2(t).
\end{equation}  

Now, we establish a control of velocity fields in terms of the functional $\mathcal{H}(t)$. To that end, employing \eqref{observ.1} and the simple fact that 
$$\mathcal{E}_0\lesssim \mathcal{E}_t \leq c^{\frac{1}{2}}\mathcal{H}(t), \quad \text{for all } c\geq 1,$$
one deduces that
\begin{equation*}
 \norm {B}_{L^\infty _t H^{2  }    }   \leq \mathcal{E}_0 +  c^{  \frac{1}{2}}    \mathcal{H} (t) \leq 2 c^{  \frac{1}{2}}    \mathcal{H} (t) .
\end{equation*}  
Hence,  \eqref{dt E: final-ES} provides us with the bound 
\begin{eqnarray} \label{v:piece1}
	\norm {\frac{1}{c} \partial_t E}_{L^2_t\dot{H}^\frac{1}{2}} \norm{B}_{L^\infty_tH^2}
	\lesssim   \left( c^{-\frac{1}{2} } \mathcal{H}^2(t) + c^{-1 } \mathcal{H}^3(t)+c^{-\frac{3}{2} }   \mathcal{H}^4(t)\right) e^{\mathcal{E}_0^2}   .
	\end{eqnarray}  
On the other hand, 
combining \eqref{B:parabolic ES:B***}, \eqref{final:ES-GammaH1} and \eqref{final:ES-Gamma3} yields, for any $c\geq 1$, that 
\begin{equation*}
\begin{aligned}
 \norm{ \Gamma}_{L^\infty_t L^3 }  &\norm{(B, \Gamma)}_{L^2_t\dot{H}^1}  \\ & \lesssim    \left(  \mathcal{H}(0) +  c^{-\frac{1}{2}}\mathcal{H}^2(t) \right) \Bigg(  \norm{\Gamma_0}_{L^2}      +  \mathcal{E}_0 +  c^{-\frac{1}{2}} \mathcal{H}(t) +    c^{-2} (1+ \mathcal{E}_0)       \mathcal{H}^2(t)   + c^{-2}         \mathcal{H}^3(t)   \Bigg)\\
  & \lesssim    \mathcal{H}   (0) \left(  \norm{\Gamma_0}_{L^2}      +  \mathcal{E}_0 \right)  + c^{-\frac{1}{2}} \mathcal{H} ^2  (t) \left(  \norm{\Gamma_0}_{L^2}      +  \mathcal{E}_0 \right)  \\
   &  \quad +   \mathcal{H}(0)  (1 + \mathcal{E}_0)   \Bigg( c^{-\frac{1}{2}} \mathcal{H}(t) +    c^{-2}     \mathcal{H}^2(t)    + c^{-2}         \mathcal{H}^3(t)   \Bigg)  \\
  & \quad + (1 + \mathcal{E}_0)
  \left(c^{-1 }  \mathcal{H}^3(t)   + c^{- \frac{5}{2} }  \mathcal{H}^4(t) + c^{-\frac{5}{2} }  \mathcal{H}^5(t)\right).
\end{aligned}
\end{equation*}
Accordingly, by employing \eqref{H:trivial:bound} and $c\geq 1$, again, we find that 
\begin{equation}\label{v:piece2}
\begin{aligned}
 \norm{ \Gamma}_{L^\infty_t L^3 } & \norm{(B, \Gamma)}_{L^2_t\dot{H}^1}
 \\
 &\lesssim   \mathcal{H}   (0) \left(  \norm{\Gamma_0}_{L^2} + 1 \right) e^{\mathcal{E}_0^2} \\
  & \quad+ \underbrace{(\norm{\Gamma_0}_{L^2}+1 )\left(  c^{-\frac{1}{2} }  \mathcal{H}^2(t)+  c^{-1 }  \mathcal{H}^3(t)   + c^{- \frac{3}{2} }  \mathcal{H}^4(t) + c^{-\frac{5}{2} }  \mathcal{H}^5(t)  \right)e^{C\mathcal{E}_0^2}}_{\bydef P_*(\mathcal{H}(t))}.
\end{aligned}
\end{equation} 
	Consequently,  by incorporating \eqref{v:piece1} and \eqref{v:piece2} into  \eqref{velocity:A}, we end up with 
 \begin{equation}\label{velocity:final}
\norm{(\omega,\Omega)}_{L^\infty_t L^2 \cap L^2_t \dot{H}^1 } \lesssim\mathcal{H}   (0) \left(  \norm{\Gamma_0}_{L^2} + 1 \right) e^{C\mathcal{E}_0^2} + P_*(\mathcal{H}(t)),
 \end{equation}
 where the value of the constant $C>0$ can be adapted to guarantee the validity of the estimate.
 
 In the next step, we gather the high and low-frequency estimates of   electromagnetic fields. To that end, on the one hand, note   that \eqref{high:B}  yields 
 \begin{equation}\label{final:high:B}
   \begin{aligned}
   c^{-1}  \norm{ (E,B) }_{\widetilde{L} ^\infty _t \dot{B}^{\frac{5}{2} }_{2,1,>}  }  +  \norm{ (E,B) }_{\widetilde{L} ^2  _t \dot{B}^{\frac{5}{2} }_{2,1,>}  }   &\lesssim     \mathcal{H}(0) + P_*(\mathcal{H}(t)).
   \end{aligned}
\end{equation}   
On the other hand, \eqref{low:B} 
  entails that
\begin{equation*} 
 \begin{aligned}
 \norm{(E,B)}_{L^\infty _t\dot H^{\frac{3}{2} }  } + c \norm {E}_{L^2_t\dot{H}^\frac{3}{2}}  &+   \norm{B}_{L^2 _t\dot B^{ \frac{5}{2}}_{2,1,<}} \\
			&	 \lesssim  
			\mathcal{H}(0)
			+c^{ -\frac{1}{2}} \mathcal{H}^2(t)
			+ \norm{ \omega }_{L_t^{\infty}L^2 \cap L_t^{2 }\dot H^{1}} \norm {B }_{L^2_t \dot{H}^1} ^{\frac{1}{3}} \mathcal{H}^{\frac{2}{3}}(t).
 \end{aligned}
\end{equation*}   
Therefore, we employ \eqref{velocity:final}   to obtain that
\begin{equation*} 
 \begin{aligned}
 \norm{(E,B)}_{L^\infty _t\dot H^{\frac{3}{2} }  } &+ c \norm {E}_{L^2_t\dot{H}^\frac{3}{2}}  +   \norm{B}_{L^2 _t\dot B^{ \frac{5}{2}}_{2,1,<}} \\
			&	 \lesssim  
			\mathcal{H}(0)
			+  P_*(\mathcal{H}(t))
			+ \left(\mathcal{H}   (0) \left(  \norm{\Gamma_0}_{L^2} + 1 \right) e^{C\mathcal{E}_0^2} + P_*(\mathcal{H}(t)) \right)  \norm {B }_{L^2_t \dot{H}^1} ^{\frac{1}{3}} \mathcal{H}^{\frac{2}{3}}(t)\\
			&	 \lesssim  
			\mathcal{H}(0)
			+  (\mathcal{H}(t) +1 )P_*(\mathcal{H}(t))
			+ \mathcal{H}   (0) \left(  \norm{\Gamma_0}_{L^2} + 1 \right) e^{C\mathcal{E}_0^2}    \norm {B }_{L^2_t \dot{H}^1} ^{\frac{1}{3}} \mathcal{H}^{\frac{2}{3}}(t),
 \end{aligned}
\end{equation*}   
which implies, for any $\lambda>0$, that
\begin{equation*} 
 \begin{aligned}
 \norm{(E,B)}_{L^\infty _t\dot H^{\frac{3}{2} }  } &+ c \norm {E}_{L^2_t\dot{H}^\frac{3}{2}}  +   \norm{B}_{L^2 _t\dot B^{ \frac{5}{2}}_{2,1,<}} \\
			&	 \lesssim  
			\mathcal{H}(0)
			+  (\mathcal{H}(t) +1 )P_*(\mathcal{H}(t))
			+  \lambda^{-2} \mathcal{H} ^3  (0) \left(  \norm{\Gamma_0}_{L^2} + 1 \right)^3 e^{C\mathcal{E}_0^2}  \norm {B }_{L^2_t \dot{H}^1}
			+  \lambda\mathcal{H}(t) .
 \end{aligned}
\end{equation*}   
After that, we employ \eqref{B:parabolic ES:B***} which leads to the control
\begin{equation*} 
 \begin{aligned}
 &\norm{(E,B)}_{L^\infty _t\dot H^{\frac{3}{2} }  } + c \norm {E}_{L^2_t\dot{H}^\frac{3}{2}}  +   \norm{B}_{L^2 _t\dot B^{ \frac{5}{2}}_{2,1,<}} \\
			&\qquad	 \lesssim  
			\mathcal{H}(0) +  (\mathcal{H}(t) +1 )P_*(\mathcal{H}(t))
			\\
			& \qquad\quad +  \lambda^{-2} \mathcal{H} ^3  (0) \left(  \norm{\Gamma_0}_{L^2} + 1 \right)^3     \left( 1+ c^{-\frac{3}{2} }   \mathcal{H}(t) +  c^{-2}   \mathcal{H}^2(t)    + c^{-2}         \mathcal{H}^3(t)    \right) e^{C\mathcal{E}_0^2}
			+  \lambda\mathcal{H}(t).
 \end{aligned}
\end{equation*}   
 At last, in view of \eqref{H:trivial:bound}, it follows that
 \begin{equation}\label{piece:three:terms}
 \begin{aligned}
 &\norm{(E,B)}_{L^\infty _t\dot H^{\frac{3}{2} }  } + c \norm {E}_{L^2_t\dot{H}^\frac{3}{2}}  +   \norm{B}_{L^2 _t\dot B^{ \frac{5}{2}}_{2,1,<}} \\
			&\qquad\qquad	 \lesssim  
			\mathcal{H}(0)
			+  (\mathcal{H}(t) +1 )P_*(\mathcal{H}(t))  
			+  \lambda^{-2} \mathcal{H} ^3  (0) \left(  \norm{\Gamma_0}_{L^2} + 1 \right)^3 e^{C\mathcal{E}_0^2}
			\\  
			&\qquad\qquad \quad +  \lambda^{-2}   \left(  \norm{\Gamma_0}_{L^2} + 1 \right)^3     \left(  c^{-\frac{3}{2} }   \mathcal{H}^4(t) +  c^{-2}   \mathcal{H}^5(t)    + c^{-2}         \mathcal{H}^6(t)    \right)e^{C\mathcal{E}_0^2}
			+  \lambda\mathcal{H}(t). 
 \end{aligned}
\end{equation}

As for the remaining piece involving $\nabla\times B-j$ which is required to construct the functional $\mathcal{H}(t)$, it is dealt with by  recasting the bound from Proposition \ref{decay-E}, for $p=\infty$ and $s=\frac{1}{2}$, combined with \eqref{observ.1}, to find that 
   \begin{equation*}
	\norm { \nabla \times B  - j }_{L^\infty  _t \dot{H}^ \frac{1}{2}}\lesssim    \mathcal{H}(0) +  c^{-\frac{1}{2}}\mathcal{H}^2(t)  +  c^{-1}  \mathcal{H}^3(t),
	\end{equation*}
	whereby, obtaining that
	\begin{equation}\label{piece:remaining}
	\norm { \nabla \times B  - j }_{L^\infty  _t \dot{H}^ \frac{1}{2}}\lesssim    \mathcal{H}(0) + P_*  (\mathcal{H}(t)).
	\end{equation}
	
	All in all, gathering the bounds
	\eqref{final:B-l2H1}, \eqref{E-AAA:2}, \eqref{final:ES-Gamma3}, \eqref{velocity:final}, \eqref{final:high:B}, \eqref{piece:three:terms} and \eqref{piece:remaining} to construct the functional $\mathcal{H}(t)$
	and choosing $\lambda$ small enough in   such a way that the term $ \lambda \mathcal{H}(t)$ can be absorbed by the left-hand side of the final estimate, we end up with the bound
	\begin{equation*}
	\mathcal{H}(t) \leq C_0 + \mathcal{P}(\mathcal{H}(t)),
\end{equation*}	 
for all $t\geq 0,$ where we set
	\begin{equation}\label{C0:def}
	C_0 \bydef  C \mathcal{H}(0) \left(1 + \mathcal{H}(0) \right)^2    \left(1+  \norm{\Gamma_0}_{L^2}       \right)^3 e^{C\mathcal{E}_0^2},  
\end{equation}	  
and
$$ \mathcal{P}(\mathcal{H}(t)) \bydef  C   \left(  \norm{\Gamma_0}_{L^2} + 1 \right)^3   \left(   c^{-\frac{1}{2} }   \mathcal{H}^2(t) +  c^{-\frac{1}{2} }   \mathcal{H}^3(t) + c^{-1 }   \mathcal{H}^4(t) +  c^{-\frac{3}{2} }   \mathcal{H}^5(t)    + c^{-2}         \mathcal{H}^6(t)    \right) e^{C\mathcal{E}_0^2}.
$$
We recall that the (possibly large) constant $C>0$ is universal. From now on, it is fixed.
	
	 The completion of the proof hinges now on a direct application of the following simple lemma.
	\begin{lem}\label{fix-point:lem}
	 Let $t\mapsto x(t) $ be a non-negative continuous  function defined for all $t\geq 0$. Consider another function $t\mapsto F(t)$
	 which is assumed to be   non-negative and increasing.
	  Further suppose that there is  $x_0 >0$ such that
	 $$ x(0)\leq x_0,  $$
	 and, for any $t\geq 0$, that
 $$x(t) \leq x_0 +F(x(t)) .$$
 If, moreover, $F$ satisfies the condition that 
\begin{equation*}
 F(2x_0) < x_0 , 
\end{equation*}  
 then, $x(t)$ enjoys the bound
  $$ x(t)< 2 x_0,$$
  for all $t\geq 0$.  
\end{lem}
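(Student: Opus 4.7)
The proof will proceed by a standard continuity (bootstrap) argument. The starting observation is that at $t=0$ the hypothesis $x(0)\leq x_0 < 2x_0$ gives the desired strict inequality, and since $x$ is continuous, the set of times where $x(t)<2x_0$ holds is (relatively) open and contains a neighborhood of the origin. The plan is to use the nonlinear bound $x(t)\leq x_0+F(x(t))$, combined with monotonicity of $F$ and the smallness condition $F(2x_0)<x_0$, to prevent $x(t)$ from ever reaching the threshold $2x_0$.

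More concretely, I would define
\begin{equation*}
	T \bydef \sup\left\{ \tau \geq 0 \ :\ x(s)<2x_0 \ \text{for all}\ s\in[0,\tau]\right\},
\end{equation*}
which is strictly positive by continuity and the initial condition $x(0)\leq x_0<2x_0$. The goal is to rule out $T<\infty$ by contradiction.

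Assuming $T<\infty$, continuity of $x$ together with the maximality in the definition of $T$ forces $x(T)=2x_0$. Then I would apply the nonlinear inequality at $t=T$ and use that $F$ is increasing to estimate
\begin{equation*}
	x(T) \leq x_0 + F(x(T)) = x_0 + F(2x_0) < x_0 + x_0 = 2x_0,
\end{equation*}
where the last strict inequality is precisely the smallness assumption $F(2x_0)<x_0$. This contradicts $x(T)=2x_0$, hence $T=\infty$, which is the desired conclusion $x(t)<2x_0$ for all $t\geq 0$.

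There is no genuine obstacle here: the argument is purely topological/continuity in flavor, and the only place one must be slightly careful is in invoking monotonicity of $F$ to convert the pointwise bound at the critical time $T$ into an inequality involving $F(2x_0)$, after which the smallness hypothesis closes the loop. This is the abstract mechanism underlying the nonlinear estimate $\mathcal{H}(t)\leq C_0+\mathcal{P}(\mathcal{H}(t))$ derived in Section \ref{Section:closing the estimates}, where one takes $x_0 = C_0$ and $F=\mathcal{P}$, with the smallness $\mathcal{P}(2C_0)<C_0$ guaranteed by choosing $c$ larger than a suitable power of $C_0$.
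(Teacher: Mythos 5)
Your proof is correct and is essentially the same continuity/bootstrap argument as the paper's: the paper shows the set $\{t\geq 0 : x(t)<2x_0\}$ is nonempty, open, and closed in $[0,\infty)$ (using $x(t_n)\leq x_0+F(x(t_n))\leq x_0+F(2x_0)<2x_0$ along a convergent sequence), while you phrase the same mechanism as a first-crossing-time contradiction at $x(T)=2x_0$. Both hinge on the identical key step, namely combining the hypothesis inequality with $F(2x_0)<x_0$ at the critical value.
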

\begin{proof}
Define the set 
$$ \mathcal{I}\bydef \left \{ t\geq 0: x(t) < 2x_0 \right\}.$$
This set is non-empty and open in $[0,\infty)$. In order to prove the   desired global bound, we only have to show that $\mathcal{I}$ is closed, as well.

 To that end, let $(t_n)_{n\in \mathbb{N}}$ be a sequence of elements in $\mathcal{I}$, converging to some limit point $t\in [0,\infty)$. Since $t_n \in \mathcal{I}$, we deduce, for all $n\in \mathbb{N} $,   that 
$$ x(t_n) \leq x_0 + F(x(t_n)) \leq x_0 + F(2x_0) < 2 x_0 .$$
By continuity of $t\mapsto x(t)$, taking $n \to \infty$ in the foregoing inequalities yields that $t\in \mathcal{I}$, thereby showing that $\mathcal{I}$ is closed and completing the proof of the lemma.
\end{proof}

We are now back   to the proof of Theorem \ref{Thm:1}. By applying Lemma \ref{fix-point:lem} with $F(t)= \mathcal{P}(t)$, we arrive at the conclusion, for all $t\geq 0,$ that  
	\begin{equation*}
	\mathcal{H}(t) \leq 2 C_0  ,
	\end{equation*}
	 as soon as      
	  \begin{equation}\label{CD:1**}
	    \mathcal{P}(2C_0) < C_0.  
	  \end{equation}
	In particular, recall that
	\begin{equation*}
		\lim_{c\to\infty}\mathcal{P}(2C_0)=0,
	\end{equation*}
	which implies the existence of another constant $c_0>0$, which only depends on the initial data, such that \eqref{CD:1**} is satisfied for all $c> c_0$. It then follows that the all norms involved in the construction of $\mathcal{H}(t)$ are bounded.
	
	Finally, in order to complete the justification of all uniform bounds claimed in the statement of Theorem \ref{Thm:1}, we only need to observe that the control of $j$ in $L^\infty_tL^2 \cap L_t^2\dot H^\frac 12$ follows directly from an application of Proposition \ref{decay-E}. This completes the proof of Theorem \ref{Thm:1} in the case of rough profiles.

\subsubsection{Regular profiles}

We proceed with the completion of the proof of Theorem \ref{Thm:1} in the case of regular profiles. Specifically, our task now consists in showing that if, furthermore, we assume initially that $$(E_0^c,B_0^c) \in  \dot{B}^{\frac{5}{2}}_{2,1},$$
  uniformly in $ c>0$, then the regularity of $(E,B)$ in $\dot{B}^{\frac{5}{2}}_{2,1}$ is propagated for all times $t>0$, uniformly in $c \in (c_0,\infty)$.
Note that this cannot be done  by a direct application of the energy estimates from Lemma \ref{lemma.reg-persistence}. Instead, we need to exploit the techniques from Lemmas \ref{lemma-high F} and \ref{low:freq:estimates}.

   Again, for simplicity, we will henceforth omit the index ``$c$'' referring to the dependence of the solution on the speed of light.
   Also, let us point out that all the Lebesgue spaces in time are now taken over the whole positive real-line $\mathbb{R}^+$.
    Moreover, the constant  $C_0$  defined in \eqref{C0:def}  will be allowed to change from one line to another, as long as that change only involves norms of the initial data which remain  uniformly bounded in $c$.

\emph{Preliminary bounds on magnetic fields.}
The control of   high frequencies is deduced, again,  from  Lemma \ref{lemma-high F} with a suitable choice of parameters. To see that, we   proceed with a bootstrap argument by first recasting the bound
\begin{eqnarray*}
	 \norm{ (E,B) }_{\widetilde{L} ^4_t  \dot{B}^{\frac{5}{2} }_{2,1,>}  }   \lesssim   c^{-\frac{1}{2}}\norm { (E_0,B_0)}_{\dot{B}^\frac{5}{2} _{2,1,>}}  + \norm u_{L^\infty_t \dot{H} ^1 \cap L^2_t \dot{H} ^2  }   \left(  \norm{ B }_{\widetilde{L} ^2_t \dot{B}^{ \frac{5}{2}}_{2,1,>}  } +  \norm{ B }_{L ^2_t \dot{B}^{ \frac{5}{2}}_{2,1,<} }    \right),
	\end{eqnarray*}  
from \eqref{E_>:5/2}, by setting $ q=4$ therein.
Therefore, recalling that we have already established the uniform estimates
 \begin{equation}\label{uniform bounds*}
	 \begin{aligned}
	 	u &\in L^\infty_t\dot{H}^1 \cap L^2_t \dot{H}^2 ,
		\\
		B &\in \widetilde{L}^2_{t}\dot{B}^{\frac{5}{2}}_{2,1,>} \cap L^2_{t}\dot{B}^{\frac{5}{2}}_{2,1,<} \cap L^\infty_t L^2 \cap L^2_t \dot{H}^1,
		\\
		c^{-1}B &\in L^\infty _{t}\dot{B}^{\frac{5}{2}}_{2,1,<}, 
	 \end{aligned}
 \end{equation}
one directly deduces that 
   $$B\in \widetilde{L} ^4_t  \dot{B}^{\frac{5}{2} }_{2,1,>}  ,$$
   uniformly with respect to $ c\in (c_0,\infty).$

   Now, we take care of the low frequencies by first applying the last estimate from Lemma \ref{cor:parabolic:maxwell} with values
 $$ s=2, \quad  m=4 \quad \text{and} \quad q= 1, $$ 
 to find that 
 \begin{equation*}
 	\norm {B}_{L^4_t \dot B^{\frac{5}{2}}_{2,1,<}} \lesssim \norm {(E_0,B_0)}_{\dot B^2_{2,1}} + \norm { P(u\times B)}_{L^4_t \dot B^{\frac{3}{2}}_{2,1}}.
 \end{equation*}
 Therefore, by further employing classical product laws, which are contained in Lemma \ref{paradifferential:1}, and interpolation inequalities, we obtain that 
 \begin{equation*}
 	\begin{aligned}
 		\norm {B}_{L^4_t \dot B^{\frac{5}{2}}_{2,1,<}} 
 		&\lesssim \norm {(E_0,B_0)}_{\dot B^2_{2,1}} + \norm { u}_{L^4_t \dot B^{\frac{3}{2}}_{2,1}}\norm {B}_{L^\infty _t \dot B^{\frac{3}{2}}_{2,1}}
 		\\
 		&\lesssim \norm {(E_0,B_0)}_{\dot B^2_{2,1}} + \norm { u}_{L^\infty _t \dot H^1\cap L^2 _t \dot H^2}\norm {B}_{L^\infty _t \dot B^{\frac{3}{2}}_{2,1}}
 		\\
 		&\lesssim \norm {(E_0,B_0)}_{\dot B^2_{2,1}} + \norm { u}_{L^\infty _t \dot H^1\cap L^2 _t \dot H^2}\Big( \norm {B}_{L^\infty _t \dot B^{\frac{3}{2}}_{2,1,>}} + \norm {B}_{L^\infty _t \dot B^{\frac{3}{2}}_{2,1,<}} \Big)
 		\\
 		&\lesssim \norm {(E_0,B_0)}_{\dot B^2_{2,1}} + \norm { u}_{L^\infty _t \dot H^1\cap L^2 _t \dot H^2}\Big( c^{-1}\norm {B}_{L^\infty _t \dot B^{\frac{5}{2}}_{2,1,>}} + \norm {B}_{L^\infty _t \dot B^{\frac{3}{2}}_{2,1,<}} \Big).
 	\end{aligned}
 	 \end{equation*}
 	 Thus, in view of the uniform bounds recalled in \eqref{uniform bounds*}, in order to control low frequencies, it only remains to show that
	\begin{equation*}
		B\in L^\infty_t \dot B^ \frac{3}{2}_{2,1,<}.
	\end{equation*}
	Instead, we are going to prove the slightly better bound
	\begin{equation}\label{special:bound:1}
		B\in \widetilde{L} ^\infty _t \dot B^{\frac{3}{2}}_{2,1,<}.
	\end{equation}
	To that end, we employ  the low-frequency estimates from Lemma \ref{cor:maxwell} with   the  values
 $$      q=\infty \quad \text {and} \quad  r=\widetilde{r}=\widetilde{q}= 2$$
 to deduce that 
 \begin{equation*}
 	\norm {B}_{\widetilde{L} ^\infty _t \dot B^{\frac{3}{2}}_{2,1,<}} 
 		\lesssim \norm {(E_0,B_0)}_{\dot B^\frac{3}{2} _{2,1}} + \norm {P( u\times B)}_{\widetilde{L} ^2 _t \dot B^{\frac{3}{2}}_{2,1}}.
 \end{equation*}
 Therefore, by further applying the paraproduct estimate \eqref{paraproduct:1}, we obtain that 
 \begin{equation*}
 	\begin{aligned}
 		\norm {B}_{\widetilde{L} ^\infty _t \dot B^{\frac{3}{2}}_{2,1,<}} 
 		&\lesssim \norm {(E_0,B_0)}_{\dot B^\frac{3}{2} _{2,1}} + \norm { u}_{L^\infty _t \dot B^{1}_{2, \infty}}\norm {B}_{\widetilde{L} ^2 _t \dot B^{2}_{2,1}}. 	\end{aligned}
 	 \end{equation*}
 	 Accordingly, we conclude that the bound \eqref{special:bound:1} holds uniformly, by an application of the embeddings
 	 \begin{equation*}
 	 	L^2_t \dot B^1_{2,\infty} \cap L^2_t \dot B^{\frac{5}{2}}_{2,\infty}
		\hookrightarrow
		\widetilde L^2_t \dot B^1_{2,\infty} \cap \widetilde L^2_t \dot B^{\frac{5}{2}}_{2,\infty}
		\hookrightarrow \widetilde{L} ^2 _t \dot B^{2}_{2,1}
 	 \end{equation*} 
 	 combined with the bounds \eqref{uniform bounds*}.
 	 
 	 All in all, gathering the preceding high and low-frequency estimates, we have shown  that
 	 \begin{equation}\label{intermediate:bound:0}
 	 	B\in L^4_t \dot B^{\frac{5}{2}}_{2,1},
 	 \end{equation}
	uniformly in $c$.
 	 
 	Next, we   improve the  previous bound  to obtain
	 \begin{equation}\label{B-median:full}
	 	B\in \widetilde L^4_t \dot B^{\frac{5}{2}}_{2,1}.
	 \end{equation}
	Note that the same bound on high frequencies is already established at the start of this step and, thus, we only need to take care of the corresponding estimate on the remaining frequencies.  To that end, we first apply the low-frequency estimates from Lemma \ref{cor:maxwell} with values 
 	 $$      q=4 , \quad \widetilde{q}= \frac{4}{3} \quad \text {and} \quad  r=\widetilde{r}= 2$$
 	 to obtain that 
 	 \begin{equation*}
 	\norm {B}_{\widetilde{L} ^4_t \dot B^{\frac{5}{2}}_{2,1,<}} \lesssim \norm {(E_0,B_0)}_{\dot B^2_{2,1}} + \norm { P(u\times B)}_{\widetilde{L}^4_t \dot B^{\frac{3}{2}}_{2,1}}.
 \end{equation*}
 	 Therefore, by further employing \eqref{paraproduct:1} to estimate the product above, we find that 
 	  \begin{equation*}
 	\norm {B}_{\widetilde{L} ^4_t \dot B^{\frac{5}{2}}_{2,1,<}} \lesssim \norm {(E_0,B_0)}_{\dot B^2_{2,1}} + \norm {u}_{L^\infty _t \dot B^{1}_{2,\infty}}\norm { B}_{\widetilde{L}^4_t \dot B^{2}_{2,1}}.
 \end{equation*}
 	 Thus, one deduces, thanks to the embeddings 
 	 \begin{equation*}
 	 	L^4_t \dot B^{\frac{3}{2}}_{2,1} \cap L^4_t \dot B^{\frac{5}{2}}_{2,1} \hookrightarrow \widetilde{L}^4_t \dot B^{\frac{3}{2}}_{2,\infty} \cap \widetilde{L}^4_t \dot B^{\frac{5}{2}}_{2,\infty} \hookrightarrow \widetilde{L}^4_t \dot B^{2}_{2,1}  ,
 	 \end{equation*}
 	 that 
 	 \begin{equation*}
 	\norm {B}_{\widetilde{L} ^4_t \dot B^{\frac{5}{2}}_{2,1,<}} \lesssim \norm {(E_0,B_0)}_{\dot B^2_{2,1}} + \norm {u}_{L^\infty _t \dot H^{1} }\norm { B}_{L^4_t \dot B^{\frac{3}{2}}_{2,1} \cap L^4_t \dot B^{\frac{5}{2}}_{2,1}}.
 \end{equation*}
 Hence, due to the bounds \eqref{uniform bounds*} and \eqref{intermediate:bound:0}, we see that the right-hand side in the preceding estimate is finite as soon as  
 $B\in L^4_t \dot B^{\frac{3}{2}}_{2,1}$ . This bound turns out to be a consequence of the embeddings
 \begin{equation*}
 	L^\infty_t L^2 \cap L^2_t \dot {H}^1 \cap  L^4_t \dot B^{\frac{5}{2}}_{2,1} \hookrightarrow  L^4_t \dot B^{\frac{1}{2}}_{2,1}  \cap L^4_t \dot B^{\frac{5}{2}}_{2,1}  \hookrightarrow L^4_t \dot B^{\frac{3}{2}}_{2,1},
 \end{equation*}
 	 whereby $
 	 	B\in \widetilde{L} ^4_t \dot B^{\frac{5}{2}}_{2,1,<}.$ 
 	 In conclusion, we have shown that \eqref{B-median:full} holds
 	 uniformly with respect to the speed of light $c\in (c_0,\infty)$.

\emph{Propagation of initial regularity.} We are now in a position to show the propagation of the $ \dot{B}^{\frac{5}{2}}_{2,1}$ regularity of the electromagnetic field, uniformly with respect to the speed of light. 
   
   As before, we deal first with high frequencies by    recasting the second estimate from Lemma \ref{lemma-high F}, with the values 
   $$ q=\infty,\quad p=4,$$
    to find that 
	 \begin{eqnarray*}
	     \norm{ (E,B) }_{\widetilde{L} ^\infty _t  \dot{B}^{\frac{5}{2} }_{2,1,>}  }   \lesssim   \norm { (E_0,B_0)}_{\dot{B}^\frac{5}{2} _{2,1,>}}  + \norm u_{L^\infty_t \dot{H} ^1 \cap L^2_t \dot{H} ^2  }   \norm{ B }_{\widetilde{L} ^4_t \dot{B}^{ \frac{5}{2}}_{2,1 }  }  .
	\end{eqnarray*}  
	Thus, it follows that
	\begin{equation*}
		(E,B) \in \widetilde{L} ^\infty _t  \dot{B}^{\frac{5}{2} }_{2,1,>}  ,
	\end{equation*}
	uniformly in $ c\in (c_0,\infty)$, by virtue of the bounds \eqref{uniform bounds*} and \eqref{B-median:full}.

	As for   low frequencies, we proceed by applying the corresponding estimate    from Lemma \ref{cor:maxwell} with the values 
	$$r=\tilde{r}=2,\quad q=\infty, \quad \tilde{q}=\frac{4}{3},$$
	to find that 
	\begin{equation*} 
		\begin{aligned}
		 \norm{ E}_{\widetilde{L}^\infty _t \dot{B}^{\frac{5}{2}}_{2,1,<}}
			&\lesssim
			 \norm{ (E_0,B_0)}_{\dot{B}^{\frac{5}{2}}_{2,1}}  + 
			c^{-\frac{1}{2}} \norm{ P(u\times B) }_{\widetilde{L} _t^  4 \dot{B}^{\frac{5}{2}}_{2,1,< } }   \\
			&\lesssim
			 \norm{ (E_0,B_0)}_{\dot{B}^{\frac{5}{2}}_{2,1}}  + 
			  \norm{ P(u\times B) }_{\widetilde{L} _t^  4 \dot{B}^{2}_{2,1,< } },
			\end{aligned}
	\end{equation*} 
	 and  
	\begin{equation*} 
		\begin{aligned}
	  \norm{ B}_{\widetilde{L}^\infty _t \dot{B}^{\frac{5}{2}}_{2,1,<}}
			&\lesssim
			 \norm{ (E_0,B_0)}_{\dot{B}^{\frac{5}{2}}_{2,1}}  + 
			 \norm{ P(u\times B) }_{\widetilde{L} _t^  4 \dot{B}^{2}_{2,1 } } .
		\end{aligned}
	\end{equation*}
	Therefore,     by employing \eqref{paraproduct:1} to estimate the products above,  we obtain, for any $\varepsilon\in (0,\frac{1}{2})$, that
	\begin{equation*} 
		\begin{aligned}
	  \norm{ (E,B)}_{\widetilde{L}^\infty _t \dot{B}^{\frac{5}{2}}_{2,1,<}} 
			 	&\lesssim
			 \norm{ (E_0,B_0)}_{\dot{B}^{\frac{5}{2}}_{2,1}}  + 
			 \norm{ u}_{\widetilde{L} _t^  4 \dot{B}^{\frac{3}{2}- \varepsilon}_{2,\infty } }  \norm{ B}_{\widetilde{L} _t^  \infty \dot{B}^{2+\varepsilon}_{2,1 } }   .
		\end{aligned}
	\end{equation*}
	Hence,  utilizing the   interpolation inequalities
	\begin{equation*} 
		\begin{aligned}
  \norm{ u}_{\widetilde{L} _t^  4 \dot{B}^{\frac{3}{2}- \varepsilon}_{2,\infty } }    \lesssim  \norm{ u}_{L_t^  4 \dot{B}^{\frac{3}{2}- \varepsilon}_{2,\infty } }  & \lesssim  \norm{ u}_{L_t^  4 \dot{H}^{\frac{1}{2}}  }^{\varepsilon} \norm{ u}_{L _t^  4 \dot{H}^{\frac{3}{2}}  }^{1-\varepsilon}\\
  & \lesssim  \norm{ u}_{L _t^  \infty L^2 \cap L^2_t \dot{H}^{1}  }^{\varepsilon} \norm{ u}_{L_t^  \infty \dot{H}^{1}  \cap L^2_t\dot{H}^2 }^{1-\varepsilon} 
		\end{aligned}
	\end{equation*}
	and 
	\begin{equation*} 
		\begin{aligned}
 \norm{ B}_{\widetilde{L} _t^  \infty \dot{B}^{2+\varepsilon}_{2,1 } }  & \lesssim \norm{ B}_{\widetilde{L} _t^  \infty \dot{B}^{\frac{5}{2}}_{2,1 } } ^{\frac{1}{2}+ \varepsilon} \norm{ B}_{\widetilde{L} _t^  \infty \dot{B}^{\frac{3}{2}}_{2, \infty } }^{ \frac{1}{2}-\varepsilon}\\
 & \lesssim \norm{ B}_{\widetilde{L} _t^  \infty \dot{B}^{\frac{5}{2}}_{2,1 } } ^{\frac{1}{2}+ \varepsilon} \norm{ B}_{L_t^  \infty \dot{H}^{\frac{3}{2}}  }^{ \frac{1}{2}-\varepsilon},
		\end{aligned}
	\end{equation*}
	we deduce, for any $\lambda\in(0,1)$, that
  \begin{equation*} 
		\begin{aligned}
	  \norm{  (E,B)}_{\widetilde{L}^\infty _t \dot{B}^{\frac{5}{2}}_{2,1,<}}
			&\lesssim 
			 \norm{ (E_0,B_0)}_{\dot{B}^{\frac{5}{2}}_{2,1}}  + \lambda   \norm{ B}_{\widetilde{L}^\infty _t \dot{B}^{\frac{5}{2}}_{2,1,<}} \\
			 & \quad+ C_\lambda  \left( \norm{ u}_{L _t^  \infty L^2 \cap L^2_t \dot{H}^{1}  }^{\varepsilon} \norm{ u}_{L_t^  \infty \dot{H}^{1}  \cap L^2_t\dot{H}^2 }^{1-\varepsilon}  \right)^{ \frac{1}{\frac{1}{2} - \varepsilon}}   \norm{ B}_{L_t^  \infty \dot{H}^{\frac{3}{2}}  }   .
		\end{aligned}
	\end{equation*}
	Finally, choosing $\lambda$ small enough, we arrive at the conclusion that 
	 \begin{equation*} 
		\begin{aligned}
	  \norm{  (E,B)}_{\widetilde{L}^\infty _t \dot{B}^{\frac{5}{2}}_{2,1,<}}
			&\lesssim 
			 \norm{ (E_0,B_0)}_{\dot{B}^{\frac{5}{2}}_{2,1}} +    \left(   \norm{ u}_{L _t^  \infty L^2 \cap L^2_t \dot{H}^{1}  }^{\varepsilon} \norm{ u}_{L_t^  \infty \dot{H}^{1}  \cap L^2_t\dot{H}^2 }^{1-\varepsilon}  \right)^{ \frac{1}{\frac{1}{2} - \varepsilon}}   \norm{ B}_{L_t^  \infty \dot{H}^{\frac{3}{2}}  }   .
		\end{aligned}
	\end{equation*}
	Again, due to the bounds \eqref{uniform bounds*}, the right-hand side above is finite, uniformly with respect to $c\in (c_0,\infty)$, thereby yielding the desired control for the low frequencies of $E$ and $B$.
	 This completes the proof of Theorem \ref{Thm:1}.\qed

\section{Convergence and proof of Theorem \ref{Thm:CV}}

\label{section:closing:CV}

Let  $(u^c,E^c,B^c)_{c>c_0}$ and $(u,B)$ be the solutions of \eqref{Maxwell:system:*2} and \eqref{MHD*}, given by Theorem \ref{Thm:1} and Corollary \ref{Thm:MHD}, respectively.  We further introduce the fluctuations
$$\widetilde{u} \bydef u^c-u, \qquad\widetilde{B} \bydef B^c-B,$$
with a corresponding similar notation 
$\widetilde{u}_0$, $\widetilde{B}_0 $ for their initial data, and  the  time-dependent function
$$ f(t) \bydef \norm{ \nabla\Big( u(t), u^c(t),  B(t),B^c(t)\Big)}_{L^3}, $$
for all $t>0$.
In view of Theorem \ref{Thm:1} and Corollary \ref{Thm:MHD}, observe that   $f\in L^2(\mathbb{R}^+)$ with 
\begin{equation}\label{f-integrability}
\int_0^\infty f^2(\tau) d\tau \leq C_0,
\end{equation}
uniformly in $c\in (c_0,\infty)$,
for some constant $C_0>0$ depending only on the initial data.

We proceed now in four steps:
\begin{enumerate}
	\item
	The $L^2$ energy estimate.
	\item
	An interpolation argument.
	\item
	Convergence of the velocity in the endpoint space $\dot H^1$.
	\item
	Convergence of the magnetic field in the endpoint space $\dot H^\frac 32$.
\end{enumerate}

\subsection{The $L^2$ energy estimate}

First, it is readily seen that the fluctuations $ \widetilde{u}$, $\widetilde{B}$ are solutions of the perturbed MHD equations 
\begin{equation}\label{system-CV}
	\begin{cases}
		\begin{aligned}
		 \partial_t \widetilde{u} +u \cdot\nabla \widetilde{u}    - \nu \Delta \widetilde{u}+  \nabla \widetilde{p} = - \widetilde{u}\cdot\nabla u^c + \widetilde{B}\cdot \nabla B + B^c\cdot \nabla \widetilde{B} + \frac{1}{c} \partial_t E^c \times B^c,\\
		 	 \partial_t \widetilde{B} +u \cdot\nabla \widetilde{B}    - \frac{1}{\sigma} \Delta \widetilde{B}  =- \widetilde{u}\cdot\nabla B^c + \widetilde{B}\cdot \nabla u + B^c\cdot \nabla \widetilde{u} + \nabla \times \left(\frac{1}{c} \partial_t E^c\right).
		\end{aligned}
	\end{cases}
\end{equation}
Therefore, by virtue of  the identities
$$ \int_{\mathbb{R}^3} (u \cdot \nabla \widetilde{u}) \cdot \widetilde{u}   \; dx =   \int_{\mathbb{R}^3} (u\cdot \nabla \widetilde{B}) \cdot \widetilde{B}    \; dx = 0$$
and
$$ \int_{\mathbb{R}^3} (B^c\cdot \nabla \widetilde{B}) \cdot \widetilde{u}   \; dx +  \int_{\mathbb{R}^3} (B^c\cdot \nabla \widetilde{u}) \cdot \widetilde{B}    \; dx = 0,$$
 performing an $L^2$ energy estimate    yields, for all $t>0$, that
\begin{equation*}
\begin{aligned}
\|(\widetilde{u}, \widetilde{B})(t)\|_{L^2}^2+ \int_0^t \|(\widetilde{u}, \widetilde{B}) (\tau) \|_{ \dot{H}^1}^2 d\tau & \lesssim \|(\widetilde{u}_0, \widetilde{B}_0)\|_{L^2}^2 + \int_0^t \| ( \widetilde{u},\widetilde{B})(\tau)\|_{ L^3}^2 f(\tau) d\tau \\
& \quad +  \norm {\frac{1}{c} \partial_t E^c \times B^c  }_{ L^2_t \dot{H}^{-1} } ^2+  \norm { \frac{1}{c} \partial_t E^c   }_{L^2_t L^2 }  ^2\\
& \lesssim \|(\widetilde{u}_0, \widetilde{B}_0)\|_{L^2}^2 + \int_0^t  \| ( \widetilde{u},\widetilde{B})(\tau)\|_{ L^2}  \| ( \widetilde{u},\widetilde{B})(\tau)\|_{ \dot{H}^1}   f(\tau) d\tau \\
& \quad +  \norm {\frac{1}{c} \partial_t E^c }_{L^2_t \dot{H}^{\frac{1}{2}}} ^2\norm{ B^c  }_{ L^\infty_t L^2 } ^2+  \norm { \frac{1}{c} \partial_t E^c   }_{L^2_t L^2 }  ^2.
\end{aligned}
\end{equation*}
 Thus, we find,  for any $\lambda>0$, that
 \begin{equation*}
 \begin{aligned}
\|(\widetilde{u}, \widetilde{B})(t)\|_{L^2}^2 &+ \int_0^t \|(\widetilde{u}, \widetilde{B}) (\tau) \|_{ \dot{H}^1}^2 d\tau   \\ & \lesssim \|(\widetilde{u}_0, \widetilde{B}_0)\|_{L^2}^2  +  \lambda^{-1}\int_0^t  \| ( \widetilde{u},\widetilde{B})(\tau)\|_{ L^2}   ^2   f^2(\tau) d\tau + \lambda \int_0^t \|(\widetilde{u}, \widetilde{B} )(\tau) \|_{ \dot{H}^1}^2 d\tau \\
& \quad     +  \norm {\frac{1}{c} \partial_t E^c }_{L^2_t \dot{H}^{\frac{1}{2}}} ^2\norm{ B^c  }_{ L^\infty_t L^2 } ^2+  \norm { \frac{1}{c} \partial_t E^c   }_{L^2_t L^2 }  ^2  .
\end{aligned} 
 \end{equation*}
Hence, choosing $\lambda$  small enough, utilizing the energy inequality \eqref{energy-inequa} and applying Gr\"onwall's lemma, we deduce, for some universal constant  $C>0$, that
\begin{equation*}
\begin{aligned}
\|(\widetilde{u}, \widetilde{B})(t)\|_{L^2}^2 &+  \int_0^t \|(\widetilde{u}, \widetilde{B} )(\tau) \|_{ \dot{H}^1}^2 d\tau  \\
& \qquad \lesssim  \left(  \|(\widetilde{u}_0, \widetilde{B}_0)\|_{L^2} ^2   + (\mathcal{E}_0^2  +1)  \norm {\frac{1}{c} \partial_t E^c }_{L^2_t H^{\frac{1}{2}}}^2       \right)   \exp \left(C\int_0^t   f^2(\tau) d\tau  \right).
\end{aligned}
\end{equation*}
Consequently, by virtue of  \eqref{E-decay:A000}, \eqref{E-decay:A} and \eqref{f-integrability}, we end up with 
\begin{equation}\label{final:CV:L2}
\begin{aligned}
\sup_{\tau\in [0,\infty)}\|(\widetilde{u}, \widetilde{B})(\tau)\|_{L^2}^2 &+  \int_0^ \infty \|(\widetilde{u}, \widetilde{B} )(\tau) \|_{ \dot{H}^1}^2 d\tau    \leq C_0 \underbrace{ \left(  \|(\widetilde{u}_0, \widetilde{B}_0)\|_{L^2} ^2   + c^{-2}     \right)}_{\bydef \Theta_c},
\end{aligned}
\end{equation}
where $ C_0>0$ is another constant which depends only on the size of the initial data. In the sequel, this constant will be allowed to change value as long as it remains independent of $c$. This establishes the convergence of $\widetilde u$ and $\widetilde B$ to zero in the energy space $L_t^\infty L^2\cap L_t^2\dot H^1$. 

\subsection{An interpolation argument}\label{section:interpolation}

The results in this step are standard. Indeed, since both solutions $ (u^c,B^c)$ and $(u ,B )$ belong to the space 
$$ L^\infty(\mathbb{R}^+;\dot{H}^1 \times \dot{H}^\frac{3}{2}) \cap  L^2(\mathbb{R}^+;\dot{H}^2 \times \dot{H}^\frac{5}{2}),$$
uniformly in $c$, it follows, by interpolation with \eqref{final:CV:L2}, for any $s\in [0,1]$,  that 
\begin{equation*} 
\begin{aligned}
\sup_{\tau\in [0,\infty)}\|(\widetilde{u}, \widetilde{B})(\tau)\|_{\dot{H}^s \times \dot{H}^{   \frac{3s}{2}}}^2 &+  \int_0^ \infty \|(\widetilde{u}, \widetilde{B} )(\tau) \|_{ \dot{H}^{s+1} \times \dot{H}^{ \frac{3s}{2} +1}}^2 d\tau   \leq C_0  \Theta_c^{1-s} ,
\end{aligned}
\end{equation*} 
whereby we deduce, for all $s\in [0,1)$, that 
\begin{equation*} 
\begin{aligned}
\lim_{c \rightarrow \infty}\left( \sup_{\tau\in [0,\infty)}\|(\widetilde{u}, \widetilde{B})(\tau)\|_{\dot{H}^s \times \dot{H}^{  \frac{3s}{2}}}^2  +  \int_0^ \infty \|(\widetilde{u}, \widetilde{B} )(\tau) \|_{ \dot{H}^{s+1} \times \dot{H}^{ \frac{3s}{2} +1}}^2 d\tau \right)  =0 .
\end{aligned}
\end{equation*}
We are now left   with proving the convergence of solutions in the spaces  corresponding to the endpoint case $s=1$, above.

\subsection{Convergence of the velocity in the endpoint space $\dot H^1$}

With the bounds established in the previous step, we are now going to show that the convergence of the velocity in the endpoint case $s=1$ is a straightforward consequence of the fact that the source term $\frac{1}{c}\partial_t E^c \times B^c $ in the momentum equation vanishes in $ L^2_tL^2$, as $c\to  \infty$. 

To that end, performing an $\dot{H}^1$ energy estimate for the first equation in \eqref{system-CV}, one sees that
\begin{equation*}
\begin{aligned}
 \sup_{\tau\in [0,\infty)}\| \widetilde{u} (\tau)\|_{ \dot{H}^1 }^2 + \int_0^\infty\| \widetilde{u} (\tau)\|_{ \dot{H}^2 }^2 d\tau \lesssim  \norm { \widetilde{u}_0}_{\dot{H}^1}^2 +  \norm {F }_{L^2_tL^2 }^2 + \left \|   \frac{1}{c}\partial_t E^c \times B^c \right \|_{L^2_tL^2 } ^2 ,
\end{aligned}
\end{equation*}
where we denote 
\begin{equation*}
\begin{aligned}
F  \bydef   - u \cdot \nabla \widetilde{u}    - \widetilde{u}\cdot \nabla  u^c   +  \widetilde{B}\cdot \nabla   B + B^c\cdot \nabla  \widetilde{B}   .
\end{aligned}
\end{equation*}
Moreover, by a direct application of standard paraproduct laws and interpolation inequalities, we infer that 
$$ 
\begin{aligned}
\norm {F }_{L^2_tL^2 } & \lesssim    \norm {(u,u^c)}_{L^4_t \dot{B}^{\frac{3}{2}}_{2,1}}   \norm {\widetilde{u}}_{L^4_t \dot{H}^1} +\norm { ( B,B^c)}_{ L^4_t   \dot{H}^1 }   \|\widetilde{B}\|_{ L^4 _t \dot{B}^\frac{3}{2}_{2,1} } \\
& \lesssim    \norm {(u,u^c)}_{   L^\infty _t \dot{H}^{1} \cap L^2_t \dot{H}^{2} }   \norm {\widetilde{u}}_{ L^\infty _t \dot{H}^{\frac{1}{2}} \cap L^2_t \dot{H}^{\frac{3}{2}}} +\norm { ( B,B^c)}_{  L^\infty _t \dot{H}^{\frac{1}{2}} \cap L^2_t \dot{H}^{\frac{3}{2}}}   \|\widetilde{B}\|_{  L^\infty _t \dot{H}^1 \cap L^2_t \dot{H}^{2}} ,
\end{aligned}
$$
and 
$$ 
\begin{aligned}
 \left\|   \frac{1}{c}\partial_t E^c \times B^c \right \|_{L^2_tL^2 }
 & \lesssim   \left\|   \frac{1}{c}\partial_t E^c  \right\|_{L^2_t  \dot{H}^\frac{1}{2} }    \|    B^c  \|_{L^\infty _t \dot{H}^1 } \\
  & \lesssim  \left \|   \frac{1}{c}\partial_t E^c \right \|_{L^2_t  \dot{H}^\frac{1}{2} }    \|    B^c  \|_{L^\infty _t   H^\frac{3}{2} } \\
  & \leq  C_0\left \|   \frac{1}{c}\partial_t E^c \right \|_{L^2_t  \dot{H}^\frac{1}{2} }   .
\end{aligned}   $$
Therefore, by virtue of \eqref{E-decay:A} and the bounds in Theorem \ref{Thm:1} and Corollary \ref{Thm:MHD}, we end up with 
$$  \sup_{\tau\in [0,\infty)}\| \widetilde{u} (\tau)\|_{ \dot{H}^1 }^2 + \int_0^\infty\| \widetilde{u} (\tau)\|_{ \dot{H}^2 }^2 d\tau \lesssim  \norm { \widetilde{u}_0}_{\dot{H}^1}^2 +   C_0\left(   \norm {\widetilde{u}}_{ L^\infty _t \dot{H}^{\frac{1}{2}} \cap L^2_t \dot{H}^{\frac{3}{2}}}^2 +   \|\widetilde{B}\|_{  L^\infty _t \dot{H}^1 \cap L^2_t \dot{H}^{2}}^2 + c^{-2}  \right).$$ 
Consequently, owing to the convergence results from the previous steps, we arrive at the conclusion that
\begin{equation*} 
\begin{aligned}
\lim_{c \rightarrow \infty}\left( \sup_{\tau\in [0,\infty)}\| \widetilde{u} (\tau)\|_{\dot{H}^1 }^2  +  \int_0^ \infty \| \widetilde{u} (\tau) \|_{ \dot{H}^{2} }^2 d\tau \right)  =0 ,
\end{aligned}
\end{equation*}
for we are assuming that $\widetilde u_0$ vanishes in $\dot H^1$.

\subsection{Convergence of the magnetic field in the endpoint space $\dot H^\frac 32$}
   
One could try to mimic the proof from the previous step and perform a $\dot{H}^\frac{3}{2}$ energy estimate for $\widetilde{B}$. This method would require a decay of $ \frac{1}{c} \partial_t E^c$ in $L^2_t \dot{H}^\frac{3}{2}$, which is not an available information here.

  Instead, our  proof below is inspired from a Compactness Extrapolation Lemma (see \cite[Lemma 1.4]{ah2}), which reduces the justification of the convergence in an endpoint setting to the analysis of the evanescence of some high frequencies.
   To see that,  we first fix $\varepsilon\in (0, 1 ) $ and pick any real number  $s\in [0,1-\varepsilon)$. Then,   by utilizing the results from Section \ref{section:interpolation}, we obtain that  
\begin{equation*}
\begin{aligned}
	\sup_{\tau \in [0,\infty)}  \int_{|\xi|< ( \Theta_c)^{  \frac{\varepsilon+ s-1}{3(1-s)} }} |\xi |^{ 3  }  \left| \mathcal{F}( \widetilde{B })(\tau ,\xi)\right|^2 d\xi
	& +  \int_0^\infty \int_{|\xi|< ( \Theta_c)^{  \frac{\varepsilon+ s-1}{3(1-s)} }} |\xi |^{5 }  \left| \mathcal{F}( \widetilde{B })(\tau ,\xi)\right|^2 d\xi d\tau
	\\ & \quad \leq  \Theta_c^{  \varepsilon +s-1 }  \left(  \| \widetilde {B}\|_{L^\infty_t \dot{H}^{  \frac{3s}{2}}}^2 + \| \widetilde {B}\|_{L^2_t \dot{H}^{ \frac{3s}{2} + 1}}^2 \right)\\
	&\quad \leq  C_0 \Theta_c^{   \varepsilon }.
\end{aligned}
\end{equation*}
For simplicity, we are now going to take $s =0 $ and $ \varepsilon = \frac{1}{4}$, thereby establishing that 
\begin{equation*} 
\begin{aligned}
\lim_{c \rightarrow \infty}\left( \sup_{\tau\in [0,\infty)}\| \mathds{1}_{\{|D|<\Theta_c^{-\frac{1}{4}}\}} \widetilde{B} (\tau)\|_{\dot{H}^\frac{3}{2} }^2  +  \int_0^ \infty \| \mathds{1}_{\{|D|< \Theta_c^{-\frac{1}{4}}\}} \widetilde{B} (\tau)  \|_{ \dot{H}^{\frac{5}{2}} }^2 d\tau \right)  =0 ,
\end{aligned}
\end{equation*}
which takes care of frequencies in $\{|\xi|<\Theta_c^{-\frac 14}\}$.

We now deal with  the high frequencies in $\{|\xi|\geq \Theta_c^{-\frac 14}\}$. To that end, we first recall, by Corollary \ref{Thm:MHD} and \eqref{RMK:HF}, that the bound  
$$B\in \widetilde{L}^\infty (\mathbb{R}^+;\dot{B}^\frac{3}{2}_{2,2})\cap  L^2(\mathbb{R}^+;\dot{H}^\frac{5}{2})$$ 
holds uniformly with respect to $c\in (c_0,\infty)$. Therefore, we find that
\begin{equation*} 
\begin{aligned}
\lim_{c \rightarrow \infty}&\Big( \sup_{\tau\in [0,\infty)}\|\mathds{1}_{\{|D|\geq  \Theta_c^{-\frac{1}{4}}\}} \widetilde{B} (\tau)\|_{\dot{H}^\frac{3}{2} }^2  +  \int_0^ \infty \|  \mathds{1}_{\{|D|\geq  \Theta_c^{-\frac{1}{4}}\}} \widetilde{B} (\tau) \|_{ \dot{H}^{\frac{5}{2}} }^2 d\tau \Big) \\
&  \leq \lim_{c \rightarrow \infty}\Big( \sup_{\tau\in [0,\infty)}\| \mathds{1}_{\{|D|\geq  \Theta_c^{-\frac{1}{4}}\}}  B^c(\tau)\|_{\dot{H}^\frac{3}{2} }^2   +  \int_0^ \infty \| \mathds{1}_{\{|D|\geq  \Theta_c^{-\frac{1}{4}}\}}  B^c (\tau) \|_{ \dot{H}^{\frac{5}{2}} }^2 d\tau \Big)\\
&\quad+\underbrace{ 
 \lim_{c \rightarrow \infty}\Big( \sup_{\tau\in [0,\infty)}\| \mathds{1}_{\{|D|\geq  \Theta_c^{-\frac{1}{4}}\}} B  (\tau)\|_{\dot{H}^\frac{3}{2} }^2   +  \int_0^ \infty \|  \mathds{1}_{\{|D|\geq  \Theta_c^{-\frac{1}{4}}\}} B (\tau)  \|_{ \dot{H}^{\frac{5}{2}} }^2 d\tau \Big)}_{=0},
\end{aligned}
\end{equation*}
where we exploited the principle described in \eqref{0-limit:HF}.

The treatment of   the terms involving $B^c$ in the right-hand side above requires yet another application of Lemma \ref{cor:maxwell}. To that end, first observing that
\begin{equation*}
	\begin{aligned}
		 \norm { B^c}_{L^\infty_t \dot{B}^\frac{3}{2}_{2,2,>}}& \leq  c^{-1} \norm { B^c}_{\widetilde{L} ^\infty_t \dot{B}^\frac{5}{2}_{2,1,>}} ,
		 \\
		  \norm { B^c}_{L^2_t \dot{B}^\frac{5}{2}_{2,2,>}}
		  &\leq  \norm { B^c}_{\widetilde{L} ^2_t \dot{B}^\frac{5}{2}_{2,1,>}} ,
	\end{aligned}
\end{equation*}
it is readily seen that the decay in the hyperbolic region, i.e.,   when   frequencies are localized in   $\{  |\xi| \gtrsim  \sigma c \} $, follows     from  \eqref{high:B} as soon as it is assumed  initially that 
$$ \lim _{c\rightarrow \infty} \left( c^{-1} \| (E^c_0,B^c_0) \|_{  \dot{B}^\frac{5}{2}_{2,1,>}} \right) =0. $$

Now, in order to study the vanishing of the   remaining frequencies of $B^c$, i.e., frequencies  localized in  $\{  \Theta_c^{-\frac{1}{4}} \leq  |\xi|  \lesssim  \sigma c \} $, we employ   the low-frequency estimate from Lemma \ref{cor:maxwell} with the values 
$$ r=\tilde{r}=\tilde{q}=2 \quad \text{and} \quad q\in \{2,\infty\},$$
 which leads to
\begin{equation*} 
		\begin{aligned}
	\|\mathds{1}_{\{|D|\geq  \Theta_c^{-\frac{1}{4}}\}}  B^c\|_{L^\infty _t \dot{B}^{\frac{3}{2}}_{2,2,<}} &+   \| \mathds{1}_{\{|D|\geq  \Theta_c^{-\frac{1}{4}}\}}  B^c\|_{L^2 _t \dot{B}^{\frac{5}{2}}_{2,2,<}} \\
			&\lesssim
			 \norm{  \mathds{1}_{\{|D|\geq  \Theta_c^{-\frac{1}{4}}\}} (E_0^c,B_0^c)}_{\dot{H}^{\frac{3}{2}} }  + 
			 \norm{ \mathds{1}_{\{|D|\geq  \Theta_c^{-\frac{1}{4}}\} }  P(u^c\times B^c)}_{L _t^  2 \dot{H}^{\frac{3}{2}}  } .
		\end{aligned}
	\end{equation*}
	Accordingly, due to the strong convergence of  $(E_0^c,B_0^c)_{c>0} $    in $\dot{H}^\frac{3}{2}$, it is then readily seen that the first term in the right-hand side above vanishes as $c\to  \infty$.
	
	 As for the second term in the right-hand side, it is controlled first by exploiting the localization in   high-frequencies to write that
\begin{equation*} 
		\begin{aligned}
  \norm{ \mathds{1}_{\{|D|\geq  \Theta_c^{-\frac{1}{4}}\} }  P(u^c\times B^c)}_{L _t^  2 \dot{H}^{\frac{3}{2}}  }  \leq  \Theta_c^{\frac{1}{8}} \norm{ P(u^c\times B^c) }_{L _t^  2 \dot{H}^{2  }  }.
		\end{aligned}
	\end{equation*}
Therefore, by utilizing the product law \eqref{paraproduct:1}, we arrive at the conclusion that 
\begin{equation*} 
		\begin{aligned}
  \norm{\mathds{1}_{\{|D|\geq  \Theta_c^{-\frac{1}{4}}\} }  P(u^c\times B^c)}_{L _t^  2 \dot{H}^{\frac{3}{2}}  } & \lesssim  \Theta_c^{\frac{1}{8}} \norm{ u^c  }_{L _t^  4 \dot{B}^{\frac{3}{2}  }_{2,1}  } \norm{ B^c  }_{L _t^  4 \dot{H}^{2  }  } \\
   &\lesssim  \Theta_c^{\frac{1}{8}} \norm{ u^c  }_{L _t^  4 \dot{B}^{\frac{3}{2}  }_{2,1}  } \norm{ B^c  }_{L ^  \infty_t \dot{H}^{\frac{3}{2}  } \cap L _t^  2 \dot{H}^{\frac{5}{2}  }  }.
		\end{aligned}
	\end{equation*}
	Hence, due to the bounds from Theorem \ref{Thm:1} and the fact that 
	$$\lim_{c\to \infty} \Theta_c=0,$$
	we deduce that the right-hand side above vanishes in the limit $c\to  \infty$, thereby concluding the proof Theorem \ref{Thm:CV}.
\qed

\bibliographystyle{plain} 
\bibliography{NSM_bib}
\end{document}